\definecolor{caribbeangreen}{rgb}{0.0, 0.8, 0.6}
\setlist{topsep=0ex,itemsep=1ex}
    \newcommand{\dom}{\mbox{\rm dom}}
    \newcommand{\thzfc}{\mathrm{ZFC}}
    \newcommand{\Awf}{\mathcal{A}}
    \newcommand{\Bwf}{\mathcal{B}}
    \newcommand{\Dwf}{\mathcal{D}}
    \newcommand{\Ewf}{\mathcal{E}}
    \newcommand{\Iwf}{\mathcal{I}}
    \newcommand{\Jwf}{\mathcal{J}}
    \newcommand{\Mwf}{\mathcal{M}}
    \newcommand{\Nwf}{\mathcal{N}}
    \newcommand{\Pwf}{\mathcal{P}}
    \newcommand{\Swf}{\mathcal{S}}
 \newcommand{\Wwf}{\mathcal{W}}
    \newcommand{\bfrak}{\mathfrak{b}}
    \newcommand{\cfrak}{\mathfrak{c}}
    \newcommand{\dfrak}{\mathfrak{d}}
    \newcommand{\menos}{\smallsetminus}
    \newcommand{\pts}{\mathcal{P}}
    \newcommand{\add}{\mbox{\rm add}}
    \newcommand{\cov}{\mbox{\rm cov}}
    \newcommand{\non}{\mbox{\rm non}}
    \newcommand{\cof}{\mbox{\rm cof}}
    \newcommand{\Aor}{\mathbb{A}}
    \newcommand{\Bor}{\mathds{B}}
    \newcommand{\Cor}{\mathds{C}}
    \newcommand{\Dor}{\mathds{D}}
    \newcommand{\Eor}{\mathds{E}}
    \newcommand{\Mor}{\mathds{M}}
    \newcommand{\MIor}{\mathbb{MI}}
    \newcommand{\Por}{\mathds{P}}
    \newcommand{\Ior}{\mathds{I}}
    \newcommand{\Qnm}{\dot{\mathds{Q}}}
    \newcommand{\SNwf}{\mathcal{SN}}
    \newcommand{\cf}{\mbox{\rm cf}}
    \newcommand{\la}{\langle}
    \newcommand{\ra}{\rangle}
\newcommand{\tbf}{\mathbf{t}}
\newcommand{\Seq}{\mathrm{seq}}
\newcommand{\Fr}{\mathrm{Fr}}
\newcommand{\Rbf}{\mathbf{R}}
\newcommand{\Cbf}{\mathbf{C}}
\newcommand{\Lc}{\mathbf{Lc}}
\newcommand{\aLc}{\mathbf{aLc}}
\newcommand{\Scal}{\mathcal{S}}
\newcommand{\id}{\mathrm{id}}
\newcommand{\blc}{\mathfrak{b}^{\mathrm{Lc}}}
\newcommand{\dlc}{\mathfrak{d}^{\mathrm{Lc}}}
\newcommand{\balc}{\mathfrak{b}^{\mathrm{aLc}}}
\newcommand{\dalc}{\mathfrak{d}^{\mathrm{aLc}}}
\newcommand{\abs}[1]{\left|#1\right|}
\newcommand{\leqT}{\preceq_{\mathrm{T}}}
\newcommand{\geqT}{\succeq_{\mathrm{T}}}
\newcommand{\eqT}{\cong_{\mathrm{T}}}
\newcommand{\gen}{\mathrm{gen}}
\newcommand{\refin}[2]{#1 \sqsubseteq_{\mathrm{R}} #2}
\newcommand{\subin}[3]{\mathrm{Sub}(#1, #2, #3)}
\newcommand{\ep}[1]{\mathrm{ep}(#1)}
\newcommand{\st}{\mid}
\newcommand{\set}[2]{\{#1 \st\, #2\}}
\newcommand{\largeset}[2]{\left\{#1 \;\middle|\; #2\right\}}
\newcommand{\seq}[2]{\la #1 \st #2\ra}
\newcommand{\baire}{{}^\omega\omega}
\newcommand{\baireinc}{{}^{\uparrow\omega}\omega}
\newcommand{\cantor}{{}^\omega2}
\newcommand{\NAwf}{\Nwf\!\Awf}
\definecolor{sub0}{RGB}{29,32,137}
\definecolor{sub1}{RGB}{1,71,157}
\definecolor{sub2}{RGB}{1,104,183}
\definecolor{sub3}{RGB}{0,160,234}
\definecolor{sug}{RGB}{0,154,68}
\definecolor{suy}{RGB}{208,219,1}
\newcommand{\caribbeangreen}[1]{{\color{caribbeangreen}#1}}
\DeclareSymbolFont{extraup}{U}{zavm}{m}{n}
\DeclareMathSymbol{\varheart}{\mathalpha}{extraup}{86}
\DeclareMathSymbol{\vardiamond}{\mathalpha}{extraup}{87}
\definecolor{dodger}{rgb}{0.0,0.5,1.0}
\definecolor{amber}{rgb}{1.0,0.49,0.0}
\definecolor{ogreen}{RGB}{107,142,35}
\title[Cardinal characteristics associated with small subsets of reals]{Cardinal characteristics associated with small subsets of reals}
\author{Miguel A. Cardona}
\address{Einstein Institute of Mathematics\\
Edmond J. Safra Campus, Givat Ram\\
The Hebrew University of Jerusalem\\
Jerusalem, 91904, Israel}
\email{\href{miguel.cardona@mail.huji.ac.il}{miguel.cardona@mail.huji.ac.il}}
\urladdr{\url{https://sites.google.com/mail.huji.ac.il/miguel-cardona-montoya/home-page
}}
\author{Adam Marton}
 \address{Department of Applied Mathematics and Business Informatics, Faculty of Economics, Technical University of Košice, B. Němcovej 32, 040 01 Košice, Slovakia}
\email{\href{adam.marton@tuke.sk}{adam.marton@tuke.sk}}
\author{Jaroslav \v Supina}
 \address{Institute of Mathematics, P.J. \v{S}af\'arik University in Ko\v sice, Jesenn\'a 5, 040 01 Ko\v{s}ice, Slovakia}
\email{\href{jaroslav.supina@upjs.sk}{jaroslav.supina@upjs.sk}}
\thanks{All authors were supported by the Slovak Research and Development Agency under Contract No.~APVV-20-0045. The first author was supported by Pavol Jozef \v{S}af\'arik University in  Ko\v{s}ice at a postdoctoral position; the second author was supported by the internal faculty grant No. vvgs-2023-2534; and the third author was supported by the grant VEGA 1/0657/22 of the Slovak Grant Agency VEGA}
\subjclass[2020]{03E05, 03E15, 03E17, 03E35, 03E40}
\keywords{Small sets, $F_\sigma$ measure zero sets, localization and antilocalization cardinals, Cicho\'n's diagram.}
\begin{document}

\makeatletter
\def\@roman#1{\romannumeral #1}
\makeatother

\newcounter{enuAlph}
\renewcommand{\theenuAlph}{\Alph{enuAlph}}

\numberwithin{equation}{section}
\renewcommand{\theequation}{\thesection.\arabic{equation}}

\theoremstyle{plain}
  \newtheorem{theorem}[equation]{Theorem}
  \newtheorem{corollary}[equation]{Corollary}
  \newtheorem{lemma}[equation]{Lemma}
  \newtheorem{mainlemma}[equation]{Main Lemma}
  \newtheorem{fact}[equation]{Fact}
  \newtheorem{prop}[equation]{Proposition}
  \newtheorem{claim}[equation]{Claim}
  \newtheorem{hopeth}[equation]{Hopeful Theorem}
  \newtheorem{question}[equation]{Question}
  \newtheorem{problem}[equation]{Problem}
  \newtheorem{conjecture}[equation]{Conjecture}
  \newtheorem*{theorem*}{Theorem}
  \newtheorem*{mainthm*}{Main Theorem}
  \newtheorem{teorema}[enuAlph]{Theorem}
  \newtheorem*{corollary*}{Corollary}
  \newtheorem{observation}[equation]{Observation}  
\theoremstyle{definition}
  \newtheorem{definition}[equation]{Definition}
  \newtheorem{example}[equation]{Example}
  \newtheorem{remark}[equation]{Remark}
  \newtheorem{notation}[equation]{Notation}
  \newtheorem{context}[equation]{Context}
  \newtheorem{assumption}[equation]{Assumption}
  \newtheorem*{definition*}{Definition}
  \newtheorem*{acknowledgements*}{Acknowledgements}

\def\sectionautorefname{Section}
\def\subsectionautorefname{Subsection}

\begin{abstract}

Inspired by Bartoszy{\'n}ski's work on small sets, we introduce a new ideal defined by interval partitions on natural numbers and summable sequences of positive reals. Similarly, we present another ideal that relies on Bartoszy{\'n}ski's and Shelah's representation of $F_\sigma$ measure zero sets. We show they are $\sigma$-ideals characterizing all small sets and $F_\sigma$ measure zero sets.
We also study the cardinal characteristics associated with the introduced ideals. We use them to describe the invariants of measure, discuss their connection to Cicho\'n's diagram, and present related consistency results.
\end{abstract}
\maketitle

\makeatother

\section{Introduction}\label{sec:int}

The most important notion in this work is the notion of a \textit{small set} recalled below in \autoref{smallDef}. Introducing small sets is one of the main contributions of Bartoszy{\'n}ski's work~\cite{bartosmall} responding to D. Fremlin's question of whether the cofinality of the covering number of the null ideal is uncountable. This remained an open question for almost two decades. It was in 2000 when Shelah~\cite{Sh00} finally provided a negative answer to the problem. Leading up to that, Bartoszy{\'n}ski~\cite{bartosmall} offered a partial solution to the question by showing that if $\cov(\Nwf) \leq \bfrak,$ then $\cf(\cov(\Nwf)) > \aleph_{0}$. The notion of small sets turned out to be related to the anti-localization cardinals (see e.g. \cite{CM23} for details and original references) and other combinatorial notions such as the \emph{$F_\sigma$ measure zero sets}, see~\cite[Lem.~2.6.3]{bartjud}. 

To present \autoref{smallDef}, given a formula $\phi$, we use ``$\exists^\infty\, n\in\omega\colon \phi$" to abbreviate ``infinitely many natural numbers satisfy $\phi$". 

\begin{definition}[{\cite{bartosmall,BART2018}}]\label{smallDef}
Let $X\subseteq\cantor$.
\begin{enumerate}
    \item \textit{$X$ is small} if, there are sequences $\seq{I_n, J_n}{n \in\omega}$ such that 
\begin{enumerate}[label=(\alph*)]
    \item $\set{I_n}{n\in\omega}\subseteq [\omega]^{<\aleph_0}$ is a partition of a cofinite subset of $\omega$,

    \item $J_n \subseteq {}^{I_n}2$ for all $n$,
    \item $\sum_{n<\omega}\frac{|J_n|}{2^{|I_n|}}<\infty$, and 
    \item $X\subseteq\set{x\in\cantor}{\exists^{\infty}n\in\omega:x{\upharpoonright}I_n\in J_n}$. 

\end{enumerate}
    \item \textit{$X$ is small$^\star$} if, in addition, sets $I_n$ are disjoint intervals, that is, if there exists a strictly increasing sequence of integers $\set{k_n}{n\in\omega}$ such that $I_n=[k_n, k_{n+1})$ for $n\in\omega$.
\end{enumerate}
\item 
Denote by $\Swf$ and $\Swf^\star$ the collection of all small sets and  small$^\star$ sets in $\cantor$, respectively. It is clear that $\Swf^\star\subseteq\Swf$ and that every small set has measure zero, i.e.,
$\Swf\subseteq \Nwf$ (see \autoref{notation}). Bartoszy{\'n}ski~\cite[Lem.~1.3]{bartosmall} proved that none of them is an ideal.  
\end{definition}

Let $\Ior$ denote the family of all partitions of $\omega$ into finite non-empty intervals\footnote{For technical reasons, it is sometimes convenient to consider a partition of some cofinite subset of $\omega$ instead of a partition of $\omega$. Thus, depending on the context, $I\in\Ior$ will sometimes mean that $I$ is a partition of some cofinite subset of $\omega$.} and let
\[\ell^1_+:=\largeset{\varepsilon\in{^\omega(0,\infty)}}{\sum_{n\in\omega}\varepsilon_n<\infty}.\]

In this work, we offer a characterization of $\Swf^\star$ in terms of  a new 
$\sigma$-ideal $\Swf_{I,\varepsilon}$ (treating $\{\emptyset\}$ as a $\sigma$-ideal by convention) parametrized by $I\in\Ior$ and $\varepsilon\in\ell^1_+$, i.e,
\[   \Swf^\star=\bigcup_{(I,\varepsilon)\in\Ior\times\ell^1_+}\Swf_{I, \varepsilon}.
    \]
Our ideal $\Swf_{I,\varepsilon}$ is motivated by the definition of small sets, recalled in our \autoref{smallDef}.
The introduction of $\Swf_{I,\varepsilon}$ itself and its properties are provided in~\autoref{sec:a0}.

Recall that given an ideal $\Iwf$ of subsets of $X$ such that $\{x\}\in \Iwf$ for all $x\in X$, \emph{the cardinal characteristics associated with $\Iwf$} are defined by
\begin{align*}
 \add(\Iwf)&:=\min\set{|\Jwf|}{\Jwf\subseteq\Iwf\text{\ and } \bigcup\Jwf\notin\Iwf};\\
     \cov(\Iwf)&:=\min\set{|\Jwf|}{\Jwf\subseteq\Iwf\text{\ and }\bigcup\Jwf=X};\\
    \non(\Iwf)&:=\min\set{|A|}{A\subseteq X\text{\ and }A\notin\Iwf};\\
     \cof(\Iwf)&:=\min\set{|\Jwf|}{\Jwf\subseteq\Iwf\text{\ is cofinal in }\la\Iwf,\subseteq\ra}.   
\end{align*}

These cardinals are referred to as additivity, covering, uniformity, and cofinality of $\Iwf$, respectively.

In order to present our main results, we review the basic notation:

\begin{notation}\label{notation}\

\begin{enumerate}[label=\rm(\arabic*)]
    \item Given a formula $\phi$, ``$\forall^\infty\, n\in\omega\colon \phi$" means that all but finitely many natural numbers satisfy $\phi$; ``$\exists^\infty\, n\in\omega\colon \phi$" means that infinitely many natural numbers satisfy $\phi$.

    \item Denote by $\Nwf$ and $\Mwf$ the $\sigma$-ideals of Lebesgue null sets and of meager sets in~$\cantor$, respectively, and let $\Ewf$ be the $\sigma$-ideal generated by the closed measure zero subsets of $\cantor$. It is well-known that $\Ewf\subseteq\Nwf\cap\Mwf$. Moreover, it was proved that this inclusion is proper (see~\cite[Lemma 2.6.1]{bartjud}). 
    \item $\cfrak:=2^{\aleph_0}$.

    \item For $f,g\in\baire$ define 
$$f\leq^*g\text{ iff } \forall^\infty n\in\omega: f(n)\leq g(n).$$

    \item We let
\begin{align*}
    \bfrak &:=\min\set{\abs{F}}{F\subseteq\baire\text{ and }\forall g\in\baire\ \exists f\in F:f\not\leq^* g},\\
    \dfrak &:=\min\set{\abs{D}}{D\subseteq\baire\text{ and }\forall g\in\baire\ \exists f\in D:g\leq^* f},
\end{align*}
the \textit{bounding number} and the \textit{dominating number}, respectively.

\end{enumerate}
\end{notation}

One of our first main results exhibits that the covering and the uniformity number of our ideal are related to anti-localization cardinals $\balc_{b,h}, \dalc_{b,h}$ (see \autoref{defloc}):  

\begin{teorema}[\autoref{thm:alocsmall}]\label{Thm:a1}
$\nu_i=\kappa_i$ for $i<2$, where 
\begin{itemize}
    \item $\nu_0=\min\set{\cov(\Swf_{I, \varepsilon})}{I\in\Ior\text{\ and\ }\varepsilon\in\ell^1_+}$,
      \item $\nu_1=\sup\set{\non(\Swf_{I, \varepsilon})}{I\in\Ior\text{\ and\ }\varepsilon\in\ell^1_+}$,
       \item $\kappa_0=\min\largeset{\balc_{b,h}}{b,h\in{^\omega\omega}\text{\ and\ } \sum_{n\in\omega}\frac{h(n)}{b(n)}<\infty}$,
        \item  $\kappa_1=\sup\largeset{\dalc_{b,h}}{b,h\in{^\omega\omega}\text{\ and\ } \sum_{n\in\omega}\frac{h(n)}{b(n)}<\infty}$.
\end{itemize}
\end{teorema}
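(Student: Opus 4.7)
The proof hinges on a natural correspondence between the parameter spaces $\Ior\times\ell^1_+$ and $\set{(b,h)\in\baire\times\baire}{\sum_{n\in\omega}h(n)/b(n)<\infty}$. Given $(I,\varepsilon)\in\Ior\times\ell^1_+$ with $I=\seq{I_n}{n\in\omega}$, I set $b_I(n):=2^{|I_n|}$ and $h_{I,\varepsilon}(n):=\lceil\varepsilon_n\, b_I(n)\rceil$; summability of $\varepsilon$ guarantees $\sum_n h_{I,\varepsilon}(n)/b_I(n)<\infty$. Conversely, given $(b,h)$ with $\sum h/b<\infty$, I pick an interval partition $I$ with $|I_n|:=\lceil\log_2 b(n)\rceil$ and put $\varepsilon_n:=h(n)/2^{|I_n|}$. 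Through the identification $\cantor\cong\prod_{n\in\omega}{}^{I_n}2$, $x\mapsto\tilde x:=\seq{x\restriction I_n}{n\in\omega}$, each small$^\star$-witness $\seq{J_n}{n\in\omega}$ with $|J_n|/2^{|I_n|}\leq\varepsilon_n$ corresponds precisely to a $(b_I,h_{I,\varepsilon})$-slalom $\varphi$ with $\varphi(n)=J_n$, and the statement ``$\exists^\infty n\colon x\restriction I_n\in J_n$'' becomes the anti-localization relation ``$\tilde x(n)\in\varphi(n)$ for infinitely many $n$''.

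With this dictionary in place, the two identities reduce to unwinding definitions. For $i=0$, a family of small$^\star$-witnesses whose union covers $\cantor$ corresponds bijectively to a family of $(b_I,h_{I,\varepsilon})$-slaloms which anti-localizes every element of $\prod b_I$, so $\cov(\Swf_{I,\varepsilon})=\balc_{b_I,h_{I,\varepsilon}}$; taking minima over the two parameter spaces then gives $\nu_0=\kappa_0$. For $i=1$, a subset $X\subseteq\cantor$ not lying in $\Swf_{I,\varepsilon}$ corresponds to a family of reals that no single $(b,h)$-slalom catches infinitely often, so $\non(\Swf_{I,\varepsilon})=\dalc_{b_I,h_{I,\varepsilon}}$, and taking suprema yields $\nu_1=\kappa_1$.

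The main technical point is that an arbitrary $b\in\baire$ need not equal $2^{|I_n|}$ exactly. To bridge this, I plan to first verify that $\balc_{b,h}$ and $\dalc_{b,h}$ are invariant (or agree up to an irrelevant factor absorbed by the outer $\min$ or $\sup$) when $b$ is replaced by any $b'$ with $b\leq b'\leq 2b$: every $(b,h)$-slalom naturally extends to a $(b',h)$-slalom, and conversely elements of $\prod b'$ project to $\prod b$ by reducing modulo $b(n)$, at the cost of enlarging $h$ by a constant. This invariance, together with the elementary check that the condition $\sum h/b<\infty$ is preserved under the rounding $|I_n|=\lceil\log_2 b(n)\rceil$, is the bulk of the bookkeeping and closes the reverse direction. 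No genuinely set-theoretic difficulty is expected; the whole argument is combinatorial and essentially exhibits that small$^\star$-witnesses and anti-localization slaloms are two notational guises of the same object.
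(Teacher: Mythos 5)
The proposal has a genuine gap, and it stems from misreading the definition of $\Sigma_{I,\varepsilon}$. You claim a ``precise correspondence'' where a small$^\star$-witness $\seq{J_n}{n}$ with $|J_n|/2^{|I_n|}\leq\varepsilon_n$ is \emph{exactly} a $(b_I,h_{I,\varepsilon})$-slalom, and on that basis you assert pointwise equalities $\cov(\Swf_{I,\varepsilon})=\balc_{b_I,h_{I,\varepsilon}}$ and $\non(\Swf_{I,\varepsilon})=\dalc_{b_I,h_{I,\varepsilon}}$. But \autoref{a1}\ref{a1a} defines $\Sigma_{I,\varepsilon}$ by the condition $\lim_{n\to\infty}|\varphi(n)|/(2^{|I_n|}\varepsilon_n)=0$, not by $|\varphi(n)|\leq 2^{|I_n|}\varepsilon_n$. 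The difference between ``$\leq 1$'' and ``$\to 0$'' is exactly the crux. In one direction this is harmless: if $\varphi\in\Sigma_{I,\varepsilon}$ then eventually $|\varphi(n)|\leq\lfloor 2^{|I_n|}\varepsilon_n\rfloor$, which is how \autoref{lem:alcI}\ref{alcI-a} yields $\aLc(b,h)^{\perp}\leqT\Cbf_{\Swf_{I,\varepsilon}}$. In the converse direction it breaks down: with your choice $\varepsilon_n:=h(n)/2^{|I_n|}$, a $(b,h)$-slalom with $|\varphi(n)|=h(n)$ for all $n$ has ratio $|\varphi(n)|/(2^{|I_n|}\varepsilon_n)\equiv 1$ and so is \emph{not} in $\Sigma_{I,\varepsilon}$. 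Consequently your claimed inclusion $\Scal(b',h)\subseteq\Sigma_{I,\varepsilon}$ fails, and the inequalities $\cov(\Swf_{I,\varepsilon})\leq\balc_{b,h}$ and $\dalc_{b,h}\leq\non(\Swf_{I,\varepsilon})$ needed for $\nu_0\leq\kappa_0$ and $\nu_1\geq\kappa_1$ are not established.

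The paper bridges this gap with \autoref{a9}: given $\sum_n h(n)/|b'(n)|<\infty$, one can choose a non-decreasing $\delta\to\infty$ with $\sum_n\delta_n h(n)/|b'(n)|<\infty$, and then inflates the error sequence to $\varepsilon_n:=\delta_n h(n)/|b'(n)|$. With this $\varepsilon$, any $\varphi\in\Scal(b',h)$ satisfies $|\varphi(n)|/(2^{|I_n|}\varepsilon_n)\leq 1/\delta_n\to 0$, so $\Scal(b',h)\subseteq\Sigma_{I,\varepsilon}$, and \autoref{lem:alcI}\ref{alcI-b} goes through. Your argument has no analogue of this inflation step; without it, the reverse direction of the correspondence is not merely ``bookkeeping'' but is where the proof actually happens. (The $b$-versus-$2^{|I_n|}$ rounding up to a factor of $2$ really is innocuous and is dispatched by \autoref{cardinaltyDep}, as you anticipate, but that is the easy part.) A secondary issue is that you only obtain Tukey connections going one way per choice of correspondent parameters, not pointwise equalities; the theorem's statement is an equality of an outer $\min$ (respectively $\sup$), and the paper deliberately proves two one-sided inequalities --- \autoref{lem:alcI}\ref{alcI-a} and \ref{alcI-b} --- that combine only after taking $\min$/$\sup$ over the full parameter ranges.
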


As a consequence of \autoref{Thm:a1}, we obtain an alternative characterization of $\cov(\Nwf)$ and $\non(\Nwf)$. We also establish an analogous characterization of $\add(\Nwf)$ and $\cof(\Nwf)$.

\begin{teorema}[\autoref{cov_non}, \autoref{Thm:cofadd}]\label{ThmM:ac}\
\begin{enumerate}[label=\rm(\alph*)]

    \item  Assume that $\cov(\Nwf)<\bfrak$. Then $\cov(\Nwf)=\min\set{\cov(\Swf_{I, \varepsilon})}{I\in\Ior\text{\ and\ }\varepsilon\in\ell^1_+}$.

    \item Assume that $\non(\Nwf)>\dfrak$. Then $\non(\Nwf)=\sup\set{\non(\Swf_{I, \varepsilon})}{I\in\Ior\text{\ and\ }\varepsilon\in\ell^1_+}$.
    
   \item\label{ThmM:ac:a} Assume that $\add(\Nwf)<\bfrak$. Then $\add(\Nwf)=\min\set{\add(\Swf_{I,\varepsilon})}{I\in\Ior\text{\ and\ }\varepsilon\in\ell^1_+}$.

    \item\label{ThmM:ac:b}  Assume that $\cof(\Nwf)>\dfrak$. Then $\cof(\Nwf)=\sup\set{\cof(\Swf_{I, \varepsilon})}{I\in\Ior\text{\ and\ }\varepsilon\in\ell^1_+}$.
\end{enumerate}    
\end{teorema}

Inspired by the combinatorial characterization of $F_\sigma$ measure zero sets due to Bartoszy{\'n}ski and Shelah~\cite{BS1992} stated below as \autoref{a13}, we also introduce a $\sigma$-ideal $\Ewf_{I,\varepsilon}$ para\-metrized in the same way as $\Swf_{I,\varepsilon}$, which can be used to redescribe $\Ewf$ as
\[\Ewf=\bigcup_{(I,\varepsilon)\in\Ior\times\ell^1_+}\Ewf_{I, \varepsilon}.\] Details are developed in~\autoref{sec:a0}.

\begin{theorem}[{{\cite[Thm~4.3]{BS1992}}}]\label{a13}
For every $X\subseteq\cantor$,  \textit{$X\in\Ewf$} iff there is a partition $\seq{I_n}{n\in\omega}$ of $\omega$ into finite, disjoint intervals and a sequence $\seq{J_n}{n \in\omega}$ such that 
\begin{enumerate}[label=\normalfont(\alph*)]
    \item $J_n \subseteq {}^{I_n}2$ for all $n$,
    \item $\sum_{n<\omega}\frac{|J_n|}{2^{|I_n|}}<\infty$, and 
    
    \item $X\subseteq\set{x\in\cantor}{\forall^{\infty}n:x{\upharpoonright}I_n\in J_n}$.
\end{enumerate}
\end{theorem}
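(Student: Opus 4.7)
The proof splits into the two implications, with the forward direction the substantive one.

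For the reverse implication, given $\langle I_n\rangle$ and $\langle J_n\rangle$ satisfying (a)--(c), I set, for each $m \in \omega$, $F_m := \{x \in \cantor : \forall n \geq m,\ x{\upharpoonright}I_n \in J_n\}$. Each $F_m$ is closed, being an intersection of clopen sets that depend on pairwise disjoint coordinates. The disjointness of the $I_n$ makes the events $\{x{\upharpoonright}I_n \in J_n\}$ independent under Haar measure, so $\mu(F_m) = \prod_{n \geq m} |J_n|/2^{|I_n|}$; the summability condition (b) forces $|J_n|/2^{|I_n|} \to 0$, whence this infinite product vanishes. Thus each $F_m$ lies in $\Ewf$, and $X \subseteq \bigcup_m F_m \in \Ewf$.

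For the forward direction, fix closed measure-zero sets $F_k \subseteq \cantor$ with $X \subseteq \bigcup_k F_k$, and let $T_k \subseteq 2^{<\omega}$ be the tree of $F_k$, so that $|T_k \cap 2^m|/2^m \to 0$ as $m \to \infty$. The strategy is to diagonalize: build a single partition $I_n = [k_n,k_{n+1})$ whose endpoints grow fast enough that, simultaneously for every $j$, the top-level tree $T_j \cap 2^{k_{n+1}}$ is as thin as desired relative to $2^{|I_n|}$. Concretely, set $k_0 = 0$ and at stage $n$ pick $k_{n+1} > k_n$ so that for every $j \leq n$ one has $|T_j \cap 2^{k_{n+1}}| \leq 2^{k_{n+1}-k_n-2n}$; this is possible since only finitely many $j$ are involved and each ratio $|T_j \cap 2^m|/2^m$ tends to $0$. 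Then define $J_n := \{x{\upharpoonright}I_n : x \in F_j \text{ for some } j \leq n\}$. By construction, for every $j$ and every $n \geq j$, each $x \in F_j$ satisfies $x{\upharpoonright}I_n \in J_n$, so $F_j \subseteq \{x : \forall^\infty n,\ x{\upharpoonright}I_n \in J_n\}$, and hence so does $X$. The summability estimate reduces, via $|J_n| \leq \sum_{j \leq n}|T_j \cap 2^{k_{n+1}}|$, to a double sum bounded by $\sum_{n \in \omega}(n+1)\,2^{-2n} < \infty$.

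\textbf{Main obstacle.} Capturing a single closed measure-zero set by an interval partition with summable ratios follows readily from the tree-density condition $|T_k \cap 2^m|/2^m \to 0$. The real work is diagonalizing over countably many $F_k$ with one partition and one sequence $\langle J_n\rangle$. The critical balance is the exponent $-k_n - 2n$ in the bound on $|T_j \cap 2^{k_{n+1}}|$: the $-k_n$ term compensates for the factor $2^{k_n}$ that appears after dividing by $2^{|I_n|} = 2^{k_{n+1}-k_n}$, while the $-2n$ term produces a geometric tail in $n$ and leaves enough slack to absorb the inner sum over $j \leq n$ as well as the indexing of the countably many $F_k$.
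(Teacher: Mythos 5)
Your argument is correct and follows the standard proof of this result (which the paper does not prove but only cites from Bartoszy\'nski--Shelah): the reverse direction via independence of the $I_n$-blocks and the vanishing infinite product, and the forward direction via coding each closed null set $F_j$ by a tree $T_j$ with $|T_j\cap 2^m|/2^m\to 0$, recursively choosing interval endpoints, and diagonalizing over $j$. The bound $|J_n|\le\sum_{j\le n}|T_j\cap 2^{k_{n+1}}|\le (n+1)\,2^{k_{n+1}-k_n-2n}$ together with the exponent $-k_n-2n$ gives precisely the required summability $\sum_n(n+1)2^{-2n}<\infty$, so no gap remains.
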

Note that it follows directly from \autoref{a13} that $\Ewf\subseteq\Swf^\star$.

We have found a connection between the localization cardinals  $\blc_{b,h}, \dlc_{b,h}$ (see \autoref{defloc}) and the covering and uniformity numbers of $\Ewf_{I,\varepsilon}$, similar to the connection established in \autoref{Thm:a1}. 

\begin{teorema}[\autoref{a11}]\label{Thm:m2} $\lambda_i=\theta_i$ for $i<2$ where 
\begin{itemize}
    \item $\theta_0=\min\set{\cov(\Ewf_{I, \varepsilon})}{I\in\Ior\text{\ and\ }\varepsilon\in\ell^1_+}$,
    \item $\theta_1= \sup\set{\non(\Ewf_{I, \varepsilon})}{I\in\Ior\text{\ and\ }\varepsilon\in\ell^1_+}$,
    \item $\lambda_0=\min\largeset{\dlc_{b,h}}{b,h\in{^\omega\omega}\text{\ and\ } \sum_{n\in\omega}\frac{h(n)}{b(n)}<\infty}$,
    \item $\lambda_1=\sup\largeset{\blc_{b,h}}{b,h\in{^\omega\omega}\text{\ and\ } \sum_{n\in\omega}\frac{h(n)}{b(n)}<\infty}$.
\end{itemize}
\end{teorema}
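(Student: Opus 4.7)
The plan is to dualize the argument for \autoref{Thm:a1}, substituting the $\exists^\infty$-formulation of small$^\star$ sets by the $\forall^\infty$-formulation of $\Ewf_{I,\varepsilon}$ drawn from \autoref{a13}, and correspondingly the anti-localization cardinals by the localization cardinals $\blc,\dlc$. The bridge between the two pictures is the canonical identification $\cantor\cong\prod_n 2^{I_n}$: a set $A\in\Ewf_{I,\varepsilon}$ witnessed by $\langle J_n\rangle_n$ with $J_n\subseteq 2^{I_n}$ and $|J_n|\leq\varepsilon_n\cdot 2^{|I_n|}$ is exactly the set of sequences in $\prod_n 2^{I_n}$ that are localized, from some coordinate onward, by the slalom $n\mapsto J_n$.

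For the inequalities $\theta_0\geq\lambda_0$ and $\theta_1\leq\lambda_1$, I would start from $(I,\varepsilon)$ and set $b(n):=2^{|I_n|}$ together with $h(n):=\lfloor\varepsilon_n\cdot 2^{|I_n|}\rfloor$, so that $\sum_n h(n)/b(n)\leq\sum_n\varepsilon_n<\infty$. Under the identification above, the slaloms for $(b,h)$ are precisely the witnessing sequences for members of $\Ewf_{I,\varepsilon}$. This identification yields $\cov(\Ewf_{I,\varepsilon})=\dlc_{b,h}$ and $\non(\Ewf_{I,\varepsilon})=\blc_{b,h}$, and the claimed inequalities follow by taking $\min$ and $\sup$ over the respective parameters.

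For the reverse inequalities $\theta_0\leq\lambda_0$ and $\theta_1\geq\lambda_1$, given $(b,h)$ with $\sum h/b<\infty$ and (WLOG) $b(n)\geq 2$, I would set $|I_n|:=\lceil\log_2 b(n)\rceil$ and $\varepsilon_n:=2h(n)/b(n)$, so that $b(n)\leq 2^{|I_n|}<2b(n)$, $\sum\varepsilon_n<\infty$, and $\varepsilon_n\cdot 2^{|I_n|}\geq 2h(n)$. To each $x\in\cantor$ associate $f_x\in\prod b$ by $f_x(n):=(x{\upharpoonright}I_n)\bmod b(n)$, and to each slalom $\varphi$ for $(b,h)$ the sequence $J_n^\varphi:=\{s\in 2^{I_n}: s\bmod b(n)\in\varphi(n)\}$, which satisfies $|J_n^\varphi|\leq h(n)\cdot\lceil 2^{|I_n|}/b(n)\rceil\leq 2h(n)\leq\varepsilon_n\cdot 2^{|I_n|}$. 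A localizing family for $(b,h)$ of size $\dlc_{b,h}$ then pulls back to a family of $\Ewf_{I,\varepsilon}$-sets covering $\cantor$, giving $\cov(\Ewf_{I,\varepsilon})\leq\dlc_{b,h}$; dually, for any $F\subseteq\cantor$ with $|F|<\blc_{b,h}$ the set $\{f_x:x\in F\}\subseteq\prod b$ is localized by some slalom $\varphi$, and then $\langle J_n^\varphi\rangle_n$ witnesses $F\in\Ewf_{I,\varepsilon}$, giving $\non(\Ewf_{I,\varepsilon})\geq\blc_{b,h}$.

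The main obstacle I expect is the bookkeeping of constants and the handling of a few degenerate coordinates: verifying that the summability conditions are preserved with the correct factors through the passages $(I,\varepsilon)\leftrightarrow(b,h)$, and confirming that indices where $\varepsilon_n\cdot 2^{|I_n|}<1$ or $b(n)<2$ are harmless, since $\Ewf_{I,\varepsilon}$ depends only on a $\forall^\infty$ clause and is insensitive to a finite change of the partition. Beyond this, every step runs in direct parallel to the small$^\star$/anti-localization treatment used for \autoref{Thm:a1}.
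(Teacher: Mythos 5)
Your plan correctly mirrors the anti-localization/$\Swf$ argument (\autoref{lem:alcI}, \autoref{thm:alocsmall}) by routing through localization cardinals, which is the paper's route via \autoref{c:lc}. However, there is a genuine gap, and it stems from a misreading of \autoref{a1}\,\ref{a1a}: membership of $\varphi$ in $\Sigma_{I,\varepsilon}$ requires $\lim_{n\to\infty}|\varphi(n)|/(2^{|I_n|}\varepsilon_n)=0$, \emph{not} merely the pointwise bound $|\varphi(n)|\le \varepsilon_n 2^{|I_n|}$ that you use throughout. These are not equivalent, and the $\lim=0$ clause is exactly what makes \autoref{a2} work, so it cannot be dropped.

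In your first direction this only produces an overstatement: with $b(n)=2^{|I_n|}$, $h(n)=\lfloor\varepsilon_n 2^{|I_n|}\rfloor$ you have $\Sigma_{I,\varepsilon}\subsetneq\Scal(b,h)$ (up to finite tails), so ``slaloms for $(b,h)$ are precisely the witnesses for $\Ewf_{I,\varepsilon}$'' is false and the equality $\cov(\Ewf_{I,\varepsilon})=\dlc_{b,h}$ does not follow. The needed one-sided inequalities $\dlc_{b,h}\le\cov(\Ewf_{I,\varepsilon})$ and $\non(\Ewf_{I,\varepsilon})\le\blc_{b,h}$ do survive, since every genuine $\Sigma_{I,\varepsilon}$-witness eventually satisfies $|\varphi(n)|\le h(n)$ and so embeds into $\Scal(b,h)$.

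In your second direction the gap is real and breaks the argument. Taking $\varepsilon_n:=2h(n)/b(n)$, your $J_n^\varphi$ satisfies $|J_n^\varphi|\le 2h(n)=\varepsilon_n b(n)\le\varepsilon_n 2^{|I_n|}$, hence only $|J_n^\varphi|/(2^{|I_n|}\varepsilon_n)\le 1$, not $\to 0$. Thus $\langle J_n^\varphi\rangle\notin\Sigma_{I,\varepsilon}$, so $[\langle J_n^\varphi\rangle]_*$ is not an element of $\Ewf_{I,\varepsilon}$ and your covering/uniformity conclusions do not follow. The fix is exactly \autoref{a9}: pick a non-decreasing $\delta\in{}^{\omega}(0,\infty)$ with $\delta_n\to\infty$ and $\sum_n \delta_n\,h(n)/|b'(n)|<\infty$ (where $|b'(n)|=2^{|I_n|}$), and set $\varepsilon_n:=\delta_n\,h(n)/|b'(n)|$; then any $\varphi\in\Scal(b',h)$ has $|\varphi(n)|/(2^{|I_n|}\varepsilon_n)\le 1/\delta_n\to 0$, so $\Scal(b',h)\subseteq\Sigma_{I,\varepsilon}$ and the rest of your passage goes through, together with \autoref{cardinaltyDep} to move between $b$ and $b'$. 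This is precisely the construction carried out in the proof of \autoref{lem:alcI}\,\ref{alcI-b}, to which \autoref{c:lc} refers.
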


Concerning the cardinals $\theta_i$ in the previous theorem, it turns out these are related to the covering and the uniformity of the ideal of null-additive sets (see~\autoref{def:na}):

\begin{teorema}[\autoref{NAbound}] Let $\theta_i$ be as in~\autoref{Thm:m2}. Then
\begin{enumerate}[label=\normalfont(\alph*)]
    \item  $\non(\NAwf)\leq \theta_1$. 
    \item $\cov(\NAwf)\geq \theta_0$.
\end{enumerate}
\end{teorema}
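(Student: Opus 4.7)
Both parts would follow at once from the single inclusion
\[
\NAwf \subseteq \Ewf_{I_0,\varepsilon_0}
\]
for a carefully chosen fixed pair $(I_0,\varepsilon_0) \in \Ior \times \ell^1_+$. Granting the inclusion, (a) is immediate by monotonicity of $\non$: any witness $X$ to $\non(\Ewf_{I_0,\varepsilon_0})$ lies outside $\Ewf_{I_0,\varepsilon_0}$, hence outside $\NAwf$, so $\non(\NAwf) \le \non(\Ewf_{I_0,\varepsilon_0}) \le \theta_1$. Analogously, (b) follows by monotonicity of $\cov$: any covering of $\cantor$ by members of $\NAwf$ is a covering by members of $\Ewf_{I_0,\varepsilon_0}$, yielding $\cov(\NAwf) \ge \cov(\Ewf_{I_0,\varepsilon_0}) \ge \theta_0$.

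To establish the inclusion, the plan is to invoke the Bartoszy\'nski--Shelah combinatorial characterization of null-additive sets: $X \in \NAwf$ iff for every $h \in \baire$ there exist a partition $\seq{\bar I_n}{n \in \omega}$ of $\omega$ into consecutive intervals with $|\bar I_n| \ge h(n)$ and sets $\bar J_n \subseteq {}^{\bar I_n}2$ with $|\bar J_n| \le n$ such that $X \subseteq \{x \in \cantor : \forall^\infty n\colon x{\upharpoonright}\bar I_n \in \bar J_n\}$. I would fix $I_0 \in \Ior$ with rapidly growing block sizes, say $|I_0(k)| = 2k$, and set $\varepsilon_0(k) := k \cdot 2^{-|I_0(k)|} = k \cdot 4^{-k}$, which lies in $\ell^1_+$ by a geometric-series estimate.

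Given $X \in \NAwf$, I would apply the characterization with $h$ growing fast enough that, after a boundary-alignment argument, the partition $\bar I$ may be taken to be a coarsening of $I_0$: $\bar I_n = \bigcup_{k \in B_n} I_0(k)$ for some partition $\seq{B_n}{n \in \omega}$ of $\omega$ into consecutive finite intervals. For each $k \in B_n$, I would define
\[
J^0_k := \{\, s{\upharpoonright}I_0(k) : s \in \bar J_n \,\},
\]
which has $|J^0_k| \le |\bar J_n| \le n$. Since the $B_n$'s are nonempty and consecutive, $\min B_n \ge n$, so $k \ge n$ for every $k \in B_n$, giving $|J^0_k|/2^{|I_0(k)|} \le k/2^{|I_0(k)|} = \varepsilon_0(k)$. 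Moreover, $x{\upharpoonright}\bar I_n \in \bar J_n$ implies $x{\upharpoonright}I_0(k) \in J^0_k$ for every $k \in B_n$, and since the $B_n$'s partition $\omega$, this yields $X \subseteq \{x : \forall^\infty k\colon x{\upharpoonright}I_0(k) \in J^0_k\}$, so $X \in \Ewf_{I_0,\varepsilon_0}$.

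The main obstacle is the boundary-alignment step: compelling the partition $\bar I$ produced by the null-additive characterization to respect the boundaries of $I_0$, so that no $I_0$-block is split across consecutive $\bar I_n$'s. I would handle this by choosing $h$ to dominate the $I_0$-block sizes by a wide margin and then snapping each $\bar I_n$-boundary to the nearest $I_0$-boundary, either via a direct strengthening of the characterization's proof (which typically admits such flexibility in placing the partition endpoints) or by absorbing the resulting local modifications into a mild inflation of $\varepsilon_0$ by a multiplicative constant, which preserves summability and hence membership in $\ell^1_+$.
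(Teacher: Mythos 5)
Your high-level strategy — establish the single inclusion $\NAwf \subseteq \Ewf_{I_0,\varepsilon_0}$ for one contributive pair and conclude both inequalities by monotonicity — is exactly the paper's; the paper phrases it as a Tukey connection $\Cbf_{\Ewf_{I,\varepsilon}}\leqT\Cbf_{\NAwf}$ with $\Psi_-=\id$, but that is the same thing. The critical divergence is in which characterization of $\NAwf$ you invoke. You use a form that produces some partition $\bar I$ with large blocks, and then you must coerce $\bar I$ into a coarsening of your fixed $I_0$. The paper instead uses Shelah's characterization in its \emph{for all partitions} form (\autoref{NAchar}, quoted from \cite[Thm.~2.7.18~(3)]{bartjud}): for every null-additive $X$ and \emph{every} $I\in\Ior$ there is $\varphi\in\prod_n\Pwf({}^{I_n}2)$ with $|\varphi(n)|\leq n$ and $X\subseteq[\varphi]_*$. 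With that, one simply fixes $I$ with $2^{|I_n|}\geq n^2\cdot 2^n$ and $\varepsilon_n=2^{-n}$, plugs this $I$ into the characterization, and the estimate $\frac{|\varphi(n)|}{2^{|I_n|}\varepsilon_n}\leq\frac{n}{n^2}\to 0$ is immediate; no boundary alignment ever arises.

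The boundary-alignment step you flag as ``the main obstacle'' is a genuine gap, and your proposed fix does not work as stated. Snapping each $\bar I_n$-endpoint to a nearby $I_0$-boundary moves the endpoint by up to the local $I_0$-block width, and transporting $\bar J_n$ to the snapped interval blows its cardinality up by a factor of $2^{d}$ where $d$ is the number of coordinates gained, which grows without bound along with $|I_0(k)|$; this cannot be absorbed into ``a mild inflation of $\varepsilon_0$ by a multiplicative constant.'' There is also a secondary (repairable) slip: you obtain $|J^0_k|/2^{|I_0(k)|}\leq\varepsilon_0(k)$, but $\Sigma_{I_0,\varepsilon_0}$ requires the ratio $|J^0_k|/(2^{|I_0(k)|}\varepsilon_0(k))$ to tend to $0$, not merely be $\leq 1$; you would need to enlarge $\varepsilon_0$ (e.g.\ $\varepsilon_0(k)=k^2 4^{-k}$) or argue more carefully that $n/k\to 0$. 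Switching to the for-all-partitions form of Shelah's characterization eliminates both issues at once.
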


In~\autoref{sec:zfc}, we clarify the relation between the ideal $\Ewf_{I,\varepsilon}$ and $\Swf_{I,\varepsilon}$ as follows. 
\begin{teorema}[\autoref{SEcomparison}]\label{a12}
 $\cof(\Swf_{I,\varepsilon})=\cof(\Ewf_{I,\varepsilon})$ and      $\add(\Ewf_{I,\varepsilon})=\add(\Swf_{I,\varepsilon})$ for reasonable pairs $(I,\varepsilon)\in\Ior\times\ell^1_+$.   
\end{teorema}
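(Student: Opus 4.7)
\emph{Plan.} Fix $I\in\Ior$ and $\varepsilon\in\ell^1_+$. Let $\Delta$ denote the family of admissible slaloms $J=\seq{J_n}{n\in\omega}$ with $J_n\subseteq{}^{I_n}2$ used in the paper's definitions of $\Swf_{I,\varepsilon}$ and $\Ewf_{I,\varepsilon}$ in \autoref{sec:a0}, preordered by $J\sqsubseteq J'$ iff $J_n\subseteq J'_n$ for all but finitely many $n$. To each $J\in\Delta$ associate
\[
A_J:=\set{x\in\cantor}{\forall^\infty n:\, x{\upharpoonright}I_n\in J_n}\quad\text{and}\quad B_J:=\set{x\in\cantor}{\exists^\infty n:\, x{\upharpoonright}I_n\in J_n}.
\]
By the material of \autoref{sec:a0}, the collections $\{A_J:J\in\Delta\}$ and $\{B_J:J\in\Delta\}$ are cofinal bases of $\Ewf_{I,\varepsilon}$ and $\Swf_{I,\varepsilon}$ under $\subseteq$, and both assignments are monotone with respect to $\sqsubseteq$. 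The strategy is to show that both assignments are, moreover, order isomorphisms onto their images, which will immediately imply that both ideals share the additivity and cofinality of the combinatorial poset $(\Delta,\sqsubseteq)$.

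The core ingredient is the following key lemma: for every $J,J'\in\Delta$, (a) $B_J\subseteq B_{J'}$ implies $J\sqsubseteq J'$, and (b) if $A_J\ne\emptyset$, then $A_J\subseteq A_{J'}$ implies $J\sqsubseteq J'$. I would prove both parts by contrapositive. Assume $J\not\sqsubseteq J'$, fix an infinite $N\subseteq\omega$ with $J_n\setminus J'_n\ne\emptyset$ on $N$, and pick $s_n\in J_n\setminus J'_n$ for each $n\in N$. Since $\sum_n\varepsilon_n<\infty$ we have $\varepsilon_n<1$ eventually, so $J'_n\subsetneq{}^{I_n}2$ for large $n$, and one may also choose $t_n\in{}^{I_n}2\setminus J'_n$ for large $n\notin N$. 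For (a), assemble $x\in\cantor$ with $x{\upharpoonright}I_n=s_n$ on $N$ and $x{\upharpoonright}I_n=t_n$ on sufficiently large $n\notin N$; then $x\in B_J\setminus B_{J'}$, a contradiction. For (b), fix $y\in A_J$ with $y{\upharpoonright}I_n\in J_n$ for $n\ge m_0$ and define $x\in\cantor$ by $x{\upharpoonright}I_n=s_n$ for $n\in N\cap[m_0,\infty)$ and $x{\upharpoonright}I_n=y{\upharpoonright}I_n$ otherwise; then $x\in A_J\setminus A_{J'}$, again contradicting the assumption.

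Granting the key lemma, the bases $\{A_J\}$ and $\{B_J\}$ are order isomorphic to $(\Delta,\sqsubseteq)$, so cofinal and unbounded families translate verbatim between the two sides, yielding
\[
\add(\Swf_{I,\varepsilon})=\add(\Delta,\sqsubseteq)=\add(\Ewf_{I,\varepsilon})\quad\text{and}\quad\cof(\Swf_{I,\varepsilon})=\cof(\Delta,\sqsubseteq)=\cof(\Ewf_{I,\varepsilon}).
\]
The degenerate case $A_J=\emptyset$ in (b) is harmless: such $J$ contribute only the empty set to $\Ewf_{I,\varepsilon}$, and every non-trivial element of $\Ewf_{I,\varepsilon}$ is covered by some $A_{J'}$ with eventually non-empty coordinates, so the isomorphism still respects $\add$ and $\cof$. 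The main obstacle I anticipate is checking that the admissibility condition defining $\Delta$ is literally the same in both ideals, so that a single combinatorial poset simultaneously indexes both bases; this should be immediate from the parallel construction in \autoref{sec:a0}.
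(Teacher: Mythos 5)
Your proof is correct and follows essentially the same route as the paper's: the paper constructs an explicit Tukey connection between $\Swf_{I,\varepsilon}$ and $\Ewf_{I,\varepsilon}$ whose verification rests on exactly the two implications you isolate as the key lemma (these are the paper's \autoref{subsetpointwiseS} and \autoref{subsetpointwiseE}), together with the reduction to eventually-nonempty slaloms given by \autoref{nonemptyphi}. The only point worth making explicit is that this reduction, and hence the whole argument, requires the hypothesis that $(I,\varepsilon)$ is $\Ewf$-contributive, which appears in the statement of \autoref{SEcomparison} and which your write-up invokes only implicitly.
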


With respect to Cicho\'n’s diagram (see~\autoref{cichonext}), in $\thzfc$, we prove:

\begin{teorema}\label{Thm:f} \ 
\begin{enumerate}[label=\rm(\alph*)]
    \item\label{Thm:f:a} $\add(\Nwf)\leq\add(\Ewf_{I,\varepsilon})=\add(\Swf_{I,\varepsilon})\leq\cof(\Swf_{I,\varepsilon})=\cof(\Ewf_{I,\varepsilon})\leq\cof(\Nwf)$.
    \item\label{Thm:f:b}  $\cov(\Nwf)\leq\cov(\Swf)\leq\cov(\Swf_{I,\varepsilon})\leq\cov(\Ewf_{I,\varepsilon})$ and $\non(\Ewf_{I,\varepsilon})\leq\non(\Swf_{I,\varepsilon})\leq\non(\Swf)\leq\non(\Nwf)$.

    \item\label{Thm:f:c}  $\cov(\Ewf)\leq\cov(\Ewf_{I,\varepsilon})$ and $\non(\Ewf_{I,\varepsilon})\leq\non(\Ewf)$.
\end{enumerate}
\end{teorema}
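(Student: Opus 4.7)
The plan is to prove \autoref{Thm:f} by combining monotonicity of cardinal characteristics along a chain of ideal inclusions with the equalities from \autoref{a12} and a Tukey-style argument for the outer bounds of Part \ref{Thm:f:a}.

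First I would establish the chain
\[\Ewf_{I,\varepsilon}\subseteq\Swf_{I,\varepsilon}\subseteq\Swf^\star\subseteq\Swf\subseteq\Nwf\quad\text{and}\quad\Ewf_{I,\varepsilon}\subseteq\Ewf.\]
The links $\Swf_{I,\varepsilon}\subseteq\Swf^\star$ and $\Ewf_{I,\varepsilon}\subseteq\Ewf$ are immediate from the union representations $\Swf^\star=\bigcup\Swf_{I,\varepsilon}$ and $\Ewf=\bigcup\Ewf_{I,\varepsilon}$ recalled above, while $\Swf^\star\subseteq\Swf\subseteq\Nwf$ is recorded in \autoref{smallDef}. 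The essential new containment is $\Ewf_{I,\varepsilon}\subseteq\Swf_{I,\varepsilon}$, justified by the observation that any set sitting inside the \emph{eventually}-in-$J_n$ generator of $\Ewf_{I,\varepsilon}$ automatically sits inside the corresponding \emph{infinitely-often}-in-$J_n$ generator of $\Swf_{I,\varepsilon}$; the degenerate case with $J_n=\varnothing$ for infinitely many $n$ makes the $\Ewf_{I,\varepsilon}$-generator empty and is therefore trivial. Parts \ref{Thm:f:b} and \ref{Thm:f:c} then follow at once from the general fact that, for $\sigma$-ideals $\Iwf\subseteq\Jwf$ on $\cantor$, one has $\cov(\Jwf)\le\cov(\Iwf)$ and $\non(\Iwf)\le\non(\Jwf)$: apply this to each consecutive pair in the chain for \ref{Thm:f:b}, and to $\Ewf_{I,\varepsilon}\subseteq\Ewf$ for \ref{Thm:f:c}.

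For Part \ref{Thm:f:a}, the two equalities are exactly \autoref{a12}, and the middle inequality $\add(\Swf_{I,\varepsilon})\le\cof(\Swf_{I,\varepsilon})$ is the generic $\add\le\cof$ bound. What remains are the outer bounds $\add(\Nwf)\le\add(\Ewf_{I,\varepsilon})$ and $\cof(\Ewf_{I,\varepsilon})\le\cof(\Nwf)$, which cannot be deduced from the ideal inclusion $\Ewf_{I,\varepsilon}\subseteq\Nwf$ alone. My plan is to set up a Tukey reduction $(\Ewf_{I,\varepsilon},\subseteq)\leq_{\mathrm{T}}(\Nwf,\subseteq)$ built from the parameterization of $\Ewf_{I,\varepsilon}$-generators by summable slalom sequences $(J_n)_{n\in\omega}$ with $J_n\subseteq{}^{I_n}2$ and $\sum_n |J_n|/2^{|I_n|}$ controlled by a tail of $\varepsilon$. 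Any family of fewer than $\add(\Nwf)$ such sequences should be amalgamable into a single summable slalom by splitting the convergent series $\sum_n \varepsilon_n$ into countably many disjointly-supported tails and allocating one per piece; dually, a cofinal family in $\Nwf$ should transfer into a cofinal family for $\Ewf_{I,\varepsilon}$ of no larger cardinality, through a standard argument invoking Bartoszy\'nski's slalom characterization of $\add(\Nwf)$ and $\cof(\Nwf)$.

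The main obstacle I anticipate is the combinatorial bookkeeping in this Tukey reduction: one must align the interval partition $I$ and the summability budget $\varepsilon$ across a $({<}\add(\Nwf))$-sized family of $\Ewf_{I,\varepsilon}$-witnesses so that the single amalgamated witness still respects the $\ell^1_+$ constraint required to lie in $\Ewf_{I,\varepsilon}$. Once this is handled cleanly, the outer bounds in \ref{Thm:f:a} drop out and the theorem is complete.
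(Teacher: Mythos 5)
Your treatment of parts \ref{Thm:f:b} and \ref{Thm:f:c} is correct and matches the paper's route exactly: the chain $\Ewf_{I,\varepsilon}\subseteq\Swf_{I,\varepsilon}\subseteq\Swf^\star\subseteq\Swf\subseteq\Nwf$ together with $\Ewf_{I,\varepsilon}\subseteq\Ewf$, combined with monotonicity of $\cov$ and $\non$ under inclusion of $\sigma$-ideals, suffices (and the containment $\Ewf_{I,\varepsilon}\subseteq\Swf_{I,\varepsilon}$ is simply $[\varphi]_*\subseteq[\varphi]_\infty$). Likewise, quoting \autoref{a12} for the two equalities in \ref{Thm:f:a}, and $\add\leq\cof$ for the inner inequality, is precisely what the paper does.

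The gap is in the outer bounds $\add(\Nwf)\leq\add(\Ewf_{I,\varepsilon})$ and $\cof(\Ewf_{I,\varepsilon})\leq\cof(\Nwf)$, and it is a real one. Your proposed amalgamation --- ``splitting the convergent series $\sum_n\varepsilon_n$ into countably many disjointly-supported tails and allocating one per piece'' --- only handles \emph{countably} many witnesses. That is exactly the argument used to prove that $\Ewf_{I,\varepsilon}$ is a $\sigma$-ideal (\autoref{a2}), and it yields nothing beyond $\add(\Ewf_{I,\varepsilon})\geq\aleph_1$. To get up to $\add(\Nwf)$ you cannot allocate disjoint infinite tails of $\omega$ to $<\add(\Nwf)$ many objects. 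The paper's actual mechanism (\autoref{hardtukey}) is a Tukey reduction not directly to $\Nwf$ but to the sequential composition $(\langle\baireinc,\baireinc,\leq^*\rangle;\Lc(\omega,h))$ with $h(n)=(n+1)^2$: for each $X\in\Ewf_{I,\varepsilon}$ one extracts both a function $k^X\in\baireinc$ recording how fast $\varphi_X$ shrinks relative to $\varepsilon$ and, given a dominating $b$, an encoding of $\varphi_X{\restriction}[b(n),b(n+1))$ as a point in $\baire$ to be captured by a slalom. By \autoref{productchar} the bounding number of this composition is $\min\{\bfrak,\blc_{\omega,h}\}$, and because $\add(\Nwf)\leq\bfrak$ and $\blc_{\omega,h}=\add(\Nwf)$ (\autoref{addbloc}), this collapses to $\add(\Nwf)$; dually for $\cof$. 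It is this interplay with $\bfrak$ and $\dfrak$ --- the ``dominating stage'' before the ``slalom stage'' --- that your sketch omits, and without it the amalgamation does not close.
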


Item~\ref{Thm:f:a} in the theorem is obtained by combining~\autoref{SEcomparison} and \autoref{a10}, while items~\ref{Thm:f:b} and \ref{Thm:f:c} follow from the inclusion relation between the ideals. 

Regarding inequalities related to the cardinal characteristics associated with our ideals, \autoref{Thm:f} seems to be the most optimal: In~\autoref{sec:cs}, we manage to show that, in most cases, no further inequalities can be proved with the cardinals in Cicho\'n’s diagram.

\begin{figure}[ht]
\centering
\begin{tikzpicture}[scale=1.15]
\small{
\node (aleph1) at (-1,1.3) {$\aleph_1$};
\node (addn) at (-0.25,2.3){$\add(\Nwf)$};
\node (covn) at (-0.25,7.5){$\cov(\Nwf)$};
\node (covsie) at (0.75, 6){\caribbeangreen{$\cov(\Swf_{I,\varepsilon})$}};
\node (covs) at (2.3,8.5){$\cov(\Swf)$};
\node (nonn) at (11,2.3) {$\non(\Nwf)$};
\node (cfn) at (11,7.5) {$\cof(\Nwf)$};
\node (addm) at (4.2,2.3) {$\add(\Mwf)=\add(\Ewf)$};
\node (covm) at (6.6,2.3) {$\cov(\Mwf)$};
\node (nons) at (8.4,1.3) {$\non(\Swf)$};
\node (nonm) at (4.2,7.5) {$\non(\Mwf)$};
\node (cfm) at (6.6,7.5) {$\cof(\Mwf)=\cof(\Ewf)$};
\node (b) at (4.2,5.1) {$\bfrak$};
\node (d) at (6.6,5.1) {$\dfrak$};
\node (c) at (11.7,8.5) {$\cfrak$};
\node (none) at (7.45,5.1) {$\non(\Ewf)$};
\node (cove) at (3.3,5.1) {$\cov(\Ewf)$};

\node (addes) at (2,4.2) {\caribbeangreen{$\add(\Ewf_{I,\varepsilon})=\add(\Swf_{I,\varepsilon})$}};
\node (cofes) at (8.8,6) {\caribbeangreen{$\cof(\Ewf_{I,\varepsilon})=\cof(\Swf_{I,\varepsilon})$}};
\node (coveie) at (2,6.8) {\caribbeangreen{$\cov(\Ewf_{I,\varepsilon})$}};
\node (nonsie) at (10.2, 4){\caribbeangreen{$\non(\Swf_{I,\varepsilon})$}};
\node (noneie) at (8.8,3.2) {\caribbeangreen{$\non(\Ewf_{I,\varepsilon})$}};
\draw (addn) edge[->] (addes);
\draw (cofes) edge[->] (cfn);

   \draw (noneie) edge[->] (cofes);
   \draw (nonsie) edge[->] (cofes);
   
  \draw (covn) edge[->] (covs);
 \draw (covn) edge[line width=.15cm,white,-] (covsie);
 \draw (covn) edge[->] (covsie);
  
   \draw (noneie) edge[->] (nonsie);
 
 \draw (addes) edge[->] (covsie);
 \draw (aleph1) edge[->] (addn);
  \draw    (addn) edge[->] (covn)
      (covn) edge [->] (nonm)
      (covm) edge [->] (nonn)
      (nonm)edge [->] (cfm)
      (cfm)edge [->] (cfn)
      (cfn) edge[->] (c);

\draw  (nons) edge [->]  (nonn)
(covs) edge [->]  (c)
(aleph1) edge [->]  (nons);

\draw  (cove) edge [<-, ]  (covn)
(cove) edge [->]  (coveie);

\draw (noneie) edge [->]  (none);

\draw
   (cove) edge [->]  (cfm) 
   (cove) edge [<-]  (addm)
   (none) edge [<-]  (addm)
    (none) edge [->]  (cfm);
\draw
   (addn) edge [->]  (addm)
   (addm) edge [->]  (covm)
   (nonn) edge [->]  (cfn);
\draw (addm) edge [->] (b)
      (b)  edge [->] (nonm);
\draw (covm) edge [->] (d)
      (d)  edge[->] (cfm);
\draw (b) edge [->] (d);

\draw (none) edge[line width=.15cm,white,-]  (nonn);
\draw (none) edge [->]  (nonn);

\draw  (nonsie) edge[line width=.15cm,white,-]  (nons);
\draw  (nonsie) edge [->]  (nons);

\draw (none) edge[line width=.15cm,white,-]  (nons);
\draw (none) edge [->]  (nons);

\draw(addm) edge[line width=.15cm,white,-]  (none) ;
\draw(addm) edge [->]  (none) ;

\draw (none) edge[line width=.15cm,white,-]  (nonm);
\draw (none) edge [->]  (nonm);

\draw(cove) edge[line width=.15cm,white,-]  (cfm);
\draw(cove) edge [->]  (cfm) ;

\draw (addes) edge[line width=.15cm,white,-] (noneie);
\draw (addes) edge[->] (noneie);

\draw (cove) edge[line width=.15cm,white,-]  (covm);
\draw (cove) edge [<-]  (covm);

\draw (coveie) edge[line width=.15cm,white,-] (cofes);
\draw (coveie) edge[->] (cofes);

\draw  (covs) edge[line width=.15cm,white,-]  (cove);
\draw  (covs) edge [->]  (cove);

\draw (addes) edge[line width=.15cm,white,-] (coveie);
\draw (addes) edge[->] (coveie);

\draw  (covs) edge[line width=.15cm,white,-]  (covsie);
\draw  (covs) edge [->]  (covsie);

\draw (covsie) edge[line width=.15cm,white,-] (coveie);
\draw (covsie) edge[->] (coveie); 
}
\end{tikzpicture}
\caption{Cicho\'n's diagram including the cardinal characteristics associated with our ideals, and $\add(\Ewf) = \add(\Mwf)$ and $\cof(\Ewf) = \cof(\Mwf)$ due to Bartoszy{\'n}ski and Shelah~\cite{BS1992}. The relations hold for contributive pairs.}
\label{cichonext}
\end{figure}

\section{Layering of small sets}\label{sec:a0}
The objective of this section is to introduce and present the basic properties of the two new ideals $\Swf_{I,\varepsilon}$ and $\Ewf_{I,\varepsilon}$. Their definitions, as discussed in~\autoref{sec:int}, are primarily based on the combinatorial description of small sets and $F_\sigma$ measure zero sets. We examine $\Swf_{I,\varepsilon}$ and $\Ewf_{I,\varepsilon}$ and in \autoref{a2} we demonstrate that they are, in fact, $\sigma$-ideals. We also show that $\Swf^\star$ and $\Ewf$ can be characterized using these $\sigma$-ideals.

The families $\Swf_{I,\varepsilon}$ and $\Ewf_{I,\varepsilon}$ are defined as follows. 

\begin{definition}\label{a1}
\mbox{}
\begin{enumerate}[label=\normalfont(\alph*)]
    \item\label{a1a} Let $I\in\Ior$ and $\varepsilon\in\ell^1_+.$ Let us define 
\[\Sigma_{I,\varepsilon}=\largeset{\varphi\in\prod_{n\in\omega}\Pwf({}^{I_n}2)}{\lim_{n\to\infty}\frac{|\varphi(n)|}{2^{|I_n|}\cdot\varepsilon_n}=0}.\]

\item\label{a1b} For $\varphi\in \Sigma_{I,\varepsilon}$ define 
\[[\varphi]_\infty=\set{x\in\cantor}{\exists^\infty n\in\omega:x{\upharpoonright}I_n\in \varphi(n)},\]
\[[\varphi]_*=\set{x\in\cantor}{\forall^\infty n\in\omega:x{\upharpoonright}I_n\in \varphi(n)}.\]
\item Given $I$ and $\varepsilon$ as in~\ref{a1a} we define 
\[\Swf_{I,\varepsilon}=\set{X\subseteq\cantor}{\exists\varphi\in\Sigma_{ I, \varepsilon}:X\subseteq[\varphi]_\infty},\]
\[\Ewf_{I,\varepsilon}=\set{X\subseteq\cantor}{\exists\varphi\in\Sigma_{ I, \varepsilon}:X\subseteq[\varphi]_*}.\]
\end{enumerate}
\end{definition}

Concerning \autoref{a1}~\ref{a1a}, we shall often use in the proofs that 
\[
\lim_{n\to\infty}\frac{|\varphi(n)|}{2^{|I_n|}\cdot\varepsilon_n}=0\ \text{ iff }\  \forall N<\omega\ \forall^\infty n\in\omega:\frac{|\varphi(n)|}{2^{|I_n|}}<\frac{\varepsilon_n}{N}.
\]

We first show that the families $\Swf_{I,\varepsilon}$ and $\Ewf_{I,\varepsilon}$ are $\sigma$-ideals (treating the trivial cases as $\sigma$-ideals). Moreover, these $\sigma$-ideals completely characterize families $\Swf^\star$ and $\Ewf$. Note that $\Swf_{I,\varepsilon}\subseteq\Swf^\star$ and $\Ewf_{I,\varepsilon}\subseteq\Ewf$ for any $I\in\Ior$ and $\varepsilon\in\ell^1_+.$ 

Next, we shall need a~technical lemma.
\begin{lemma}\label{a9}
      For any $\varepsilon\in\ell^1_+$ there is a non-decreasing $\delta\in {^\omega(0,\infty)}$ such that $\delta\to\infty$ and $\sum_{j\in\omega}\delta_j\varepsilon_j<\infty$.
      \end{lemma}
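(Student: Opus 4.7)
The plan is to exploit the fact that the tail sums $s_n := \sum_{j \geq n}\varepsilon_j$ of a summable sequence are finite, positive, non-increasing and tend to $0$, and to construct $\delta$ as a step function that grows by one at a carefully chosen sparse sequence of indices where the tails are small.

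First I would pick indices $0 = n_0 < n_1 < n_2 < \cdots$ such that $s_{n_k} < 2^{-k}$ for every $k \geq 1$; this is possible because $s_n \to 0$. Then I would define $\delta_j := k + 1$ for $j \in [n_k, n_{k+1})$. By construction $\delta$ is strictly positive, non-decreasing (jumping from $k+1$ to $k+2$ at each $n_{k+1}$), and unbounded, so $\delta \to \infty$.

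For the summability of $\sum_j \delta_j \varepsilon_j$, I would split the sum along the blocks and bound the $k$-th block by its starting tail, obtaining
$$\sum_{j \in \omega} \delta_j \varepsilon_j \;=\; \sum_{k \in \omega} (k+1) \sum_{j = n_k}^{n_{k+1}-1} \varepsilon_j \;\leq\; s_0 \;+\; \sum_{k \geq 1} (k+1)\,2^{-k} \;<\; \infty,$$
using $s_0 = \sum_j \varepsilon_j < \infty$ for the first block and the standard convergence of $\sum_k (k+1)2^{-k}$ for the remaining blocks.

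There is no real obstacle in this argument; the only minor bookkeeping is ensuring $\delta$ is strictly positive (handled by using $k+1$ rather than $k$) and treating the $k=0$ block separately, since $s_{n_0} = s_0$ may exceed $1$ and hence does not fit the $2^{-k}$ estimate. A slicker but less elementary alternative would be to set $\delta_n := 1/\sqrt{s_n}$ directly and use the telescoping bound $\frac{s_n - s_{n+1}}{\sqrt{s_n}} \leq 2\bigl(\sqrt{s_n} - \sqrt{s_{n+1}}\bigr)$ to sum to $2\sqrt{s_0}$; however, the block construction above is more transparent and fits the combinatorial spirit of the paper.
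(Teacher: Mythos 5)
Your proof is correct and follows essentially the same route as the paper's: both arguments choose a sparse increasing sequence of indices along which the tail sums of $\varepsilon$ are forced to decay geometrically, set $\delta$ constant on the resulting blocks with increasing values, and then bound $\sum_j \delta_j\varepsilon_j$ block by block against a convergent series. The only cosmetic difference is the exact growth rate on block $k$ (the paper uses $2^k$ with tail threshold $s/4^k$, you use $k+1$ with threshold $2^{-k}$), which does not change the substance of the argument.
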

  \begin{proof}
     Let $s=\sum_{j\in\omega}\varepsilon_j$. Next, define an increasing sequence $\seq{n_i}{i\in\omega}$ as follows: let $n_0=0$ and for $i\geq 1$ let 
     $$n_{i}=\min\largeset{k>n_{i-1}}{\sum_{n\geq k}2^i\varepsilon_n<\frac{s}{2^i}}.$$
     For $j\in [n_i,n_{i+1})$ define $\delta_j=2^{i}$. To conclude the proof it is enough to show \[\lim_{i\to\infty}\sum_{j<n_i}\delta_j\varepsilon_j<\infty.\] For any $i\in\omega$ we have
     $$\sum_{j<n_i}\delta_j\varepsilon_j= \sum_{k < i}\sum_{j=n_k}^{n_{k+1}}2^k\varepsilon_j<\sum_{k<i}\frac{s}{2^k}<2s.$$
     The sequence $a_i=\sum_{j<n_i}\delta_j\varepsilon_j$ is monotone and bounded from above, hence convergent.
  \end{proof}

From now on, whenever $a<b\leq\omega$, we let $[a,b)$ denote the integer interval $\set{n\in\omega}{a\leq n<b}$.

\begin{theorem}\label{a2}
\mbox{}
\begin{enumerate}[label=\normalfont(\alph*)]
    \item\label{a2a} $\Swf_{I,\varepsilon}$ is a $\sigma$-ideal for any $I\in \Ior$ and $\varepsilon\in \ell^1_+$.

 \item\label{a2aa} $\Ewf_{I,\varepsilon}$ is a $\sigma$-ideal for any $I\in \Ior$ and $\varepsilon\in \ell^1_+$.
    \item\label{a2b} $X\in\Swf^\star$ iff there is an $I\in\Ior$ and an $\varepsilon\in\ell^1_+$ such that $X\in\Swf_{I, \varepsilon}$. In particular, 
    \[
    \Swf^\star=\bigcup_{(I,\varepsilon)\in\Ior\times\ell^1_+}\Swf_{I, \varepsilon}.
    \]
    \item\label{a2bb} $X\in\Ewf$ iff there is an $I\in\Ior$ and an $\varepsilon\in\ell^1_+$ such that $X\in\Ewf_{I, \varepsilon}$. In particular, 
    \[
    \Ewf=\bigcup_{(I,\varepsilon)\in\Ior\times\ell^1_+}\Ewf_{I, \varepsilon}.
    \]
\end{enumerate}
\end{theorem}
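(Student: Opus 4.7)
The plan is to handle \ref{a2a} and \ref{a2aa} in one stroke, and similarly \ref{a2b} and \ref{a2bb}, since the two sides differ only by replacing the ``quantifier'' $[\cdot]_\infty$ by $[\cdot]_*$. Downward closure under subsets is immediate from the definitions, and the inclusions $\Swf_{I,\varepsilon}\subseteq\Swf^\star$ and $\Ewf_{I,\varepsilon}\subseteq\Ewf$ are already observed after \autoref{a1}, so only closure under countable unions (for \ref{a2a}, \ref{a2aa}) and the converse inclusions (for \ref{a2b}, \ref{a2bb}) require real work. The trivial case $\Swf_{I,\varepsilon}=\{\emptyset\}$ (resp.~$\Ewf_{I,\varepsilon}=\{\emptyset\}$) is a $\sigma$-ideal by convention, so one may assume the class is non-trivial.

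For the $\sigma$-ideal parts, I would run a standard diagonal argument. Given $X_k\subseteq[\varphi_k]_\infty$ (resp.~$\subseteq[\varphi_k]_*$) with $\varphi_k\in\Sigma_{I,\varepsilon}$, pick $N_0<N_1<\cdots$ such that
$$\forall j\leq k\ \forall n\geq N_k:\ \frac{|\varphi_j(n)|}{2^{|I_n|}\varepsilon_n}<\frac{1}{(k+1)\cdot 2^{k}},$$
which is possible by \autoref{remark_basic} because each $\varphi_j\in\Sigma_{I,\varepsilon}$. Define $h(n):=\max\{k:N_k\leq n\}$ and $\varphi(n):=\bigcup_{j\leq h(n)}\varphi_j(n)$. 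The key calculation: for $n\geq N_0$ one has at most $h(n)+1$ summands, each bounded by $\frac{2^{|I_n|}\varepsilon_n}{(h(n)+1)\,2^{h(n)}}$, hence $|\varphi(n)|/(2^{|I_n|}\varepsilon_n)\leq 2^{-h(n)}\to 0$, so $\varphi\in\Sigma_{I,\varepsilon}$. Moreover, for each fixed $k$ and every $n\geq N_k$ we have $\varphi_k(n)\subseteq\varphi(n)$, which at once yields $[\varphi_k]_\infty\subseteq[\varphi]_\infty$ and $[\varphi_k]_*\subseteq[\varphi]_*$; therefore $\bigcup_k X_k$ lies in the required class.

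For the characterization parts, the idea is a rescaling trick via \autoref{a9}. Given $X\in\Swf^\star$ (resp.~$X\in\Ewf$), fix a witness $(I,J)$ from \autoref{smallDef} (resp.~from \autoref{a13}), so $s_n:=|J_n|/2^{|I_n|}$ is summable. Set $a_n:=s_n+2^{-n-1}$ to obtain $a\in\ell^1_+$ (the perturbation is only to force strict positivity, as demanded by $\ell^1_+$), and apply \autoref{a9} to $a$ to get a non-decreasing $\delta\to\infty$ with $\sum_n\delta_n a_n<\infty$. Setting $\varepsilon_n:=\delta_n a_n\in\ell^1_+$ and $\varphi(n):=J_n$, one computes
$$\frac{|\varphi(n)|}{2^{|I_n|}\varepsilon_n}=\frac{s_n}{\delta_n a_n}\leq\frac{1}{\delta_n}\longrightarrow 0,$$
so $\varphi\in\Sigma_{I,\varepsilon}$ and therefore $X\subseteq[\varphi]_\infty\in\Swf_{I,\varepsilon}$ in the small$^\star$ case, and $X\subseteq[\varphi]_*\in\Ewf_{I,\varepsilon}$ in the $\Ewf$ case, since the witness $(I,J)$ already places $X$ inside the corresponding set.

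The expected main obstacle is picking the correct rate in the diagonalization: the naive choice $|\varphi_j(n)|/(2^{|I_n|}\varepsilon_n)<2^{-k}$ yields only the bound $|\varphi(n)|/(2^{|I_n|}\varepsilon_n)\leq 2$, which does not give $o(1)$; one must shave a further factor of order $1/(k+1)$ to absorb the growing number of summands. Everything else in both arguments is straightforward bookkeeping.
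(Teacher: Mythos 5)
Your proof is correct and follows essentially the same route as the paper's: a diagonalization (choosing a fast-growing sequence $N_0<N_1<\cdots$ and taking a controlled finite union of the $\varphi_j$ on each block) to establish the $\sigma$-ideal property, followed by the rescaling trick via \autoref{a9} for the two characterizations. The only cosmetic difference is your choice of constant ($1/((k+1)2^k)$ versus the paper's $\varepsilon_j/n^2$); both achieve the required $o(1)$ bound once the growing number of summands is absorbed. Your perturbation $a_n:=s_n+2^{-n-1}$ to enforce strict positivity is in fact a small but genuine tightening: the paper applies \autoref{a9} directly to $\abs{\varphi(n)}/2^{\abs{I_n}}$, which could vanish on coordinates where $\varphi(n)=\emptyset$, and hence need not lie in $\ell^1_+$ as \autoref{a9} requires; your version handles this cleanly.
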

\begin{proof}
To prove \ref{a2a} and \ref{a2aa} it suffices to prove the following:
\begin{claim}
Given $\set{\varphi^n}{n\in\omega}\subseteq \Sigma_{I,\varepsilon}$ there is  $\varphi\in\Sigma_{I,\varepsilon}$ such that $$\forall n\in\omega\ \forall^\infty j\in\omega: \varphi^n(j)\subseteq \varphi(j).$$
\end{claim}
\begin{proof}
  Define a sequence $\seq{k_n}{n\in\omega}$ as follows: $k_0=0$ and for $n\geq 1$,
$$k_{n}=\min\largeset{m>k_{n-1}}{\forall i< n\forall j\geq m:\frac{\abs{\varphi^i(j)}}{2^{\abs{I_j}}}<\frac{\varepsilon_j}{n^2}}.$$

Now define $\varphi$ by 
\[\varphi(j)=\bigcup_{i < n}\varphi^i(j)\]
for all $j\in[k_n,k_{n+1})$. Notice that
\begin{itemize}
    \item for all $j\in[k_n,k_{n+1})$,
\[\frac{|\varphi(j)|}{2^{|I_j|}}\leq\sum_{i=0}^{n-1}\frac{|\varphi^i(j)|}{2^{|I_j|}}< n\cdot\frac{\varepsilon_j}{n^2}=\frac{\varepsilon_j}{n},\]

    \item $\forall n\in\omega\,\forall^\infty j\in\omega:\varphi^n(j)\subseteq \varphi(j )$.
\end{itemize}  
\end{proof}

\noindent\ref{a2b}: 
The direction from right to left is straightforward. We prove the reversed direction. To see that, let $Y\in \Swf^\star$. Choose $I\in\Ior$ and $\varphi\in \prod_{n\in\omega}\Pwf({^{I_n}2})$ such that $\sum_{n\in\omega}\frac{\abs{\varphi(n)}}{2^{\abs{I_n}}}<\infty$ and $Y\subseteq \set{x\in {^\omega2}}{\exists^\infty n:x{\restriction} I_n\in\varphi(n)}$. Applying~\autoref{a9} we find $\delta\in {^\omega(0,\infty)}$ such that $\delta\to\infty$ and $$\sum_{n\in\omega}\delta_n\frac{\abs{\varphi(n)}}{2^{\abs{I_n}}}<\infty.$$
  Define for any $n\in\omega$ $$\varepsilon_n=\delta_n\frac{\abs{\varphi(n)}}{2^{\abs{I_n}}}.$$ Then $\varepsilon\in\ell^1_+$ and
  $$\frac{\abs{\varphi(n)}}{2^{\abs{I_n}}\cdot \varepsilon_n}=\frac{1}{\delta_n}\to 0.$$
  Thus, $\varphi\in \Sigma_{I,\varepsilon}$ and $Y\in \Swf_{I,\varepsilon}$.

  \noindent\ref{a2bb}: Like in~\ref{a2b}, using \autoref{a13}.
\end{proof}

For many $I,\varepsilon$ we have that $\Swf_{I,\varepsilon}=\{\emptyset\}$ or $\Ewf_{I,\varepsilon}=\{\emptyset\}$. 
Since we are interested in non-trivial $\sigma$-ideals, we provide the following characterizations, \autoref{charofrSIe} and \autoref{charofrEIe}, to avoid the trivial cases. 

\begin{lemma}\label{charofrSIe}
    The following statements are equivalent for any $(I,\varepsilon)\in\Ior\times\ell^1_+$.
    \begin{enumerate}[label=\normalfont(\arabic*)]
        \item\label{charofrSIe1} $\Swf_{I,\varepsilon}\neq \{\emptyset\}$.
         \item\label{charofrSIe1lim} $\exists A\in [\omega]^\omega: \lim_{n\in A} 2^{\abs{I_{n}}}\cdot \varepsilon_{n}=\infty$.

    \end{enumerate}
\end{lemma}
\begin{proof}
We shall use the following representation of \ref{charofrSIe1lim}:
\[
\exists A\in [\omega]^\omega\ \forall N\in\omega\ \forall^\infty n\in A:N\cdot 2^{-\abs{I_n}}<\varepsilon_n.
\] 

\ref{charofrSIe1}$~\Rightarrow$~\ref{charofrSIe1lim}: Suppose not~\ref{charofrSIe1lim}. Then for any $A\in[\omega]^\omega$ there is $N\in\omega$ such that 
\[\exists^\infty n\in A: \frac{1}{2^{\abs{I_n}}} \geq \frac{\varepsilon_n}{N}.\]
Let $\varphi\in \Sigma_{I,\varepsilon}$ be such that $\emptyset\neq [\varphi]_\infty\in \Swf_{I,\varepsilon}$. There exist infinitely many $n$ such that $\varphi(n)\neq \emptyset$. Let $A=\set{n\in\omega}{\varphi(n)\neq \emptyset}$. Since $\abs{\varphi(n)}\geq 1$  for any $n\in A$, we have that $\frac{\abs{\varphi(n)}}{2^{\abs{I_n}}}\geq \frac{1}{2^{\abs{I_n}}}$ for all $n\in A$. Therefore, 
$$\exists^\infty n\in \omega: \frac{\abs{\varphi(n)}}{2^{\abs{I_n}}} \geq \frac{\varepsilon_n}{N}.$$ Consequently, $\varphi\notin \Sigma_{I,\varepsilon}$, which is a contradiction.

\ref{charofrSIe1lim}$~\Rightarrow$~\ref{charofrSIe1}: Define $\varphi\in \Sigma_{I,\varepsilon}$ such that $\abs{\varphi(n)}=1$ for all $n\in A$ and $0$ otherwise. We will show that $\varphi\in \Sigma_{I,\varepsilon}$. Let $N\in\omega$. Then by \ref{charofrSIe1lim},

\begin{align*}
   &\forall^\infty n\in A:\frac{\abs{\varphi(n)}}{2^{\abs{I_n}}}=\frac{1}{2^{\abs{I_n}}}<\frac{\varepsilon_n}{N},\text{ and} \\
  & \forall^\infty n\in \omega\smallsetminus A: \frac{\abs{\varphi(n)}}{2^{\abs{I_n}}}=0<\frac{\varepsilon_n}{N}.
\end{align*}
Therefore, $\varphi\in\Sigma_{I,\varepsilon}$ and clearly, $[\varphi]_\infty\neq \emptyset$.
\end{proof}

Any pair $(I,\varepsilon)\in\Ior\times\ell^1_+$ satisfying the conditions in \autoref{charofrSIe} is called $\Swf^\star$\textbf{-contributive}.

Because of \autoref{charofrSIe}, we have the following.

\begin{corollary}\label{reasonablePairs}
   For any $(I,\varepsilon)\in\Ior\times\ell^1_+$,  if the sequence $\seq{2^{\abs{I_n}}\cdot\varepsilon_n}{n\in\omega}$ is bounded, then $\Swf_{I,\varepsilon}=\{\emptyset\}$.
\end{corollary}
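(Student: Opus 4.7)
The plan is to apply the characterization in~\autoref{charofrSIe} directly. Recall from that lemma that $\Swf_{I,\varepsilon}\neq\{\emptyset\}$ is equivalent to condition~\ref{charofrSIe1lim}, namely the existence of an infinite $A\subseteq\omega$ such that $\lim_{n\in A} 2^{\abs{I_n}}\cdot\varepsilon_n=\infty$. Thus the corollary will follow by proving the contrapositive: boundedness of $\seq{2^{\abs{I_n}}\cdot\varepsilon_n}{n\in\omega}$ rules out the existence of such an $A$.

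Concretely, suppose $2^{\abs{I_n}}\cdot\varepsilon_n\leq M$ for all $n\in\omega$ and some finite bound $M$. Then for any infinite $A\subseteq\omega$ the subsequence $\seq{2^{\abs{I_n}}\cdot\varepsilon_n}{n\in A}$ is also bounded by $M$, hence cannot tend to $\infty$. This fails condition~\ref{charofrSIe1lim} of~\autoref{charofrSIe}, and by the equivalence with condition~\ref{charofrSIe1} we conclude $\Swf_{I,\varepsilon}=\{\emptyset\}$.

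There is no substantive obstacle here; the corollary is essentially a convenient restatement of~\autoref{charofrSIe}~\ref{charofrSIe1lim} in a form that allows one to discard trivial pairs $(I,\varepsilon)$ at a glance, without producing an infinite witness set. Alternatively, one could invoke condition~\ref{charofrSIe2}: if $2^{\abs{I_n}}\cdot\varepsilon_n\leq M$ for all $n$, then taking $N=M+1$ shows that $N\cdot 2^{-\abs{I_n}}<\varepsilon_n$ fails on any infinite set, again yielding $\Swf_{I,\varepsilon}=\{\emptyset\}$.
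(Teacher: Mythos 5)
Your proof is correct and follows the same route the paper takes: the corollary is stated as an immediate consequence of \autoref{charofrSIe}, and you have simply spelled out the contrapositive of condition~\ref{charofrSIe1lim} (boundedness of $\seq{2^{\abs{I_n}}\cdot\varepsilon_n}{n\in\omega}$ precludes any infinite subsequence diverging to $\infty$). The alternative check via condition~\ref{charofrSIe2} that you mention is also fine and equivalent.
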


\begin{remark}
Let $(I,\varepsilon)\in\Ior\times\ell^1_+$.

\begin{enumerate}[label=\normalfont(\alph*)]
\item    If the sequence $\seq{\abs{I_n}}{n\in\omega}$ is bounded, then $\Swf_{I,\varepsilon}=\Ewf_{I,\varepsilon}=\{\emptyset\}$. This is a consequence of \autoref{reasonablePairs}.
 \item\label{boundedsub}  Let $\varphi\in\Sigma_{I,\varepsilon}$. If there is $A\in[\omega]^\omega$ such that  the sequence $\seq{\abs{I_n}}{n\in A}$ is bounded, then $\forall^\infty n\in A : \varphi(n)=\emptyset$.
 Indeed, if there existed infinitely many $n\in A$ such that $\varphi(n)\neq \emptyset$, then  for infinitely many $n\in A$
$$\frac{\abs{\varphi(n)}}{2^{\abs{I_n}}\cdot \varepsilon_n}\geq \frac{\abs{\varphi(n)}}{2^B\cdot \varepsilon_n}\geq \frac{1}{2^B}\cdot \frac{1}{\varepsilon_n}\to\infty,$$
where $B$ is a bound of $\seq{\abs{I_n}}{n\in A}$.
Since a subsequence of $\frac{\abs{\varphi(n)}}{2^{\abs{I_n}}\cdot \varepsilon_n}$ tends to infinity, $\lim_{n\to\infty }\frac{\abs{\varphi(n)}}{2^{\abs{I_n}}\cdot \varepsilon_n}=0$ cannot hold.
 \end{enumerate}
\end{remark}

Just as in \autoref{charofrSIe}, we have the following for $\Ewf_{I,\varepsilon}$.

\begin{lemma}\label{charofrEIe}
    The following statements are equivalent for any $(I,\varepsilon)\in\Ior\times\ell^1_+$.
    \begin{enumerate}[label=\normalfont(\arabic*)]
        \item\label{charofrEIe1} $\Ewf_{I,\varepsilon}\neq \{\emptyset\}$.
        \item\label{charofrEIe1lim} $\lim_{n\to\infty} 2^{\abs{I_n}}\cdot \varepsilon_n=\infty$.
    \end{enumerate}
\end{lemma}
\begin{proof}
Similar to the proof of \autoref{charofrSIe}.
\end{proof}
A pair $(I,\varepsilon)\in\Ior\times\ell_+^1$ satisfying conditions in \autoref{charofrEIe} is called $\Ewf$\textbf{-contributive}. 
Notice that any $\Ewf$-contributive $(I,\varepsilon)$ is automatically also $\Swf^\star$-contributive.

Inclusions between the studied ideals are depicted in \autoref{diagram:rel}.
\begin{figure}[H]
\begin{center}
\begin{tikzpicture}[]
\node (a) at (-5, -3.5) {$\Ewf_{I,\varepsilon}$};
\node (as) at (-5, -1) {$\Swf_{I,\varepsilon}$};
\node (aa) at (-3, -3.5) {$\Ewf$};
\node (aas) at (-3, -1) {$\Swf^\star$};
\node (b) at (-1, -3.5) {$\Mwf\cap\Nwf$};
\node (bs) at (-1, -1) {$\Nwf$};
\node (c) at (1, -3.5) {$\Mwf$};
\foreach \from/\to in {a/as, aa/aas,a/aa,as/aas,aa/b,aas/bs,b/bs,b/c} \draw [->] (\from) -- (\to);
\end{tikzpicture}
\end{center}
\caption{Inclusions.}
\label{diagram:rel}
\end{figure}

Many inclusions in \autoref{diagram:rel} are proper or cannot be added as one can see in the following assertion. Moreover, it was shown in \cite{bartosmall} that the inclusion $\Swf^\star\subseteq\Nwf$ is proper, i.e., $\Swf^\star\subsetneq\Nwf$.
 \begin{prop}\label{relation:SandE}
 The following holds for any $I\in\Ior$ and any $\varepsilon\in\ell^1_+$:
\begin{enumerate}[label=\normalfont(\alph*)]
    \item\label{neqIea} $\Swf_{I,\varepsilon}\subsetneq \Swf^\star$.
    \item\label{neqIeb}  $\Ewf_{I,\varepsilon}\subsetneq \Ewf$.\\
    \par
\noindent Moreover, if $(I,\varepsilon)$ is $\Swf^\star$-contributive then    
    \item\label{neqIec}  $\Ewf_{I,\varepsilon}\subsetneq \Swf_{I,\varepsilon}$.    
    \item\label{neqIed}  $\Swf_{I,\varepsilon}\not\subseteq\Ewf$.        
\end{enumerate} 
\end{prop}
 \begin{proof}
 \ref{neqIea}: This follows from the fact that $\Swf_{I,\varepsilon}$ is a $\sigma$-ideal by \autoref{a2} (recall that $\Swf^\star$ is not an ideal see~\cite[Lem.~1.3]{bartosmall} or see also~\cite{BART2018,bartjud}). 

\ref{neqIeb}:
Because of \autoref{charofrEIe}, we can restrict ourselves to pairs $(I,\varepsilon)\in\Ior\times\ell^1_+$ such that $\forall^\infty n\in\omega: 2^{-\abs{I_n}}<\varepsilon_n$. So assume that $(I,\varepsilon)$ is such a pair. We will construct a $\bar\varphi$  such that $[\bar\varphi]_*\in\Ewf\smallsetminus\Ewf_{I,\varepsilon}$. 

Let \[v:=\set{n\in\omega}{2^{-\abs{I_n}}\leq \varepsilon_n}\ \textrm{and}\ w:=\set{n\in\omega}{\frac{\abs{\varphi(n)}}{2^{\abs{I_n}}}<\varepsilon_n<1}\footnote{$w=\set{n\in\omega}{\frac{\abs{\varphi(n)}}{2^{\abs{I_n}}}<\varepsilon_n}\cap \set{n\in\omega}{\varepsilon_n<1}$. Thus, $w$ is an intersection of two cofinite sets.}.\]
By the assumption, $v$ and $w$ are cofinite sets, which implies that $v\cap w$ is cofinite. Later, for each $n\in v\cap w$ find $X_n$ such that
$$\varepsilon_n\leq \frac{\abs{\varphi(n)\cup X_n}}{2^{\abs{I_n}}}<\varepsilon_n+2^{-\abs{I_n}}.$$
Note that there always exists such an $X_n$. Indeed, since $\frac{\abs{\varphi(n)}}{2^{\abs{I_n}}}<\varepsilon_n<1$, there exist some functions $x\in {^{I_n}2}\smallsetminus \varphi(n)$. Adding one $x$ to $\varphi(n)$ increases the fraction $\frac{\abs{\varphi(n)}}{2^{\abs{I_n}}}$ by $\frac{1}{2^{\abs{I_n}}}$.

Finally, define $\bar{\varphi}$ as follows: For every $n\in\omega$ let
$$\bar{\varphi}(n)=\begin{cases}
    \varphi(n)\cup X_n, & \text{if } n\in v\cap w,\\
    \emptyset,& \text{otherwise}.
\end{cases}$$
Clearly, $[\bar{\varphi}]_*=\set{x\in {^\omega 2}}{\forall^\infty n \in \omega: x{\restriction} I_n\in \bar{\varphi}(n)}$ is in $\Ewf$. Indeed, for any $n\in\omega$ we have
$$\frac{\abs{\bar{\varphi}(n)}}{2^{\abs{I_n}}}<\varepsilon_n+2^{-\abs{I_n}}\leq 2\cdot\varepsilon_n.$$
On the other hand, $[\bar{\varphi}]_*\notin \Ewf_{I,\varepsilon}$ since there is an infinite set $v\cap w$ such that $\frac{\abs{\bar{\varphi}(n)}}{2^{\abs{I_n}}}\geq \varepsilon_n$ for all $n\in v\cap w$.

 \ref{neqIec}:
 If $\Ewf_{I,\varepsilon}=\{\emptyset\}$ for some $I,\varepsilon$, then the proof is done. So, assume that $\Ewf_{I,\varepsilon}\neq\{\emptyset\}$. Take any infinite and co-infinite $B\subseteq \omega$ and define $\varphi(n)=\{\boldsymbol{0}\}$ on $B$ ($\boldsymbol{0}$ represents a~constant zero sequence on any $I_n$), and $\varphi(n)=\emptyset$ on $\omega\smallsetminus B$. Then clearly $\varphi\in\Sigma_{I,\varepsilon}$ and $\emptyset\neq[\varphi]_\infty\in\Swf_{I,\varepsilon}$. It remains to see that $[\varphi]_\infty\notin\Ewf_{I,\varepsilon}$. To this end, we will show that $[\varphi]_\infty\not\subseteq[\psi]_*$ for any $\psi\in\Sigma_{I,\varepsilon}$. So, let $\psi\in\Sigma_{I,\varepsilon}$. Then ${^{I_n}2}\neq\psi(n)$ for all but finitely many $n\in \omega\smallsetminus B$ by the definition of $\Sigma_{I,\varepsilon}$. Define $v=\set{n\in \omega\smallsetminus B}{{^{I_n}2}\neq\psi(n)}$. Note that $\abs{v}=\aleph_0$. Then define $x\in{^\omega2}$ by
 $$x{\restriction} I_n = \begin{cases}
     \boldsymbol{0},& \text{ if } n\in \omega\smallsetminus v,\\
    \text{any } a\in {^{I_n}2}\smallsetminus \psi(n),& \text{ if } n\in v.
 \end{cases}$$
 Therefore $x\in[\varphi]_\infty$ and $x\notin [\psi]_*$.

\ref{neqIed}:
 We are going to show $\Swf_{I,\varepsilon}\not\subseteq\Ewf$. Choose any $\varphi\in\Sigma_{I,\varepsilon}$ such that $[\varphi]_\infty\neq\emptyset$. We will show that $[\varphi]_\infty\notin\Ewf$, i.e., we will show that for any $J\in\Ior$ and any $\psi\in\prod_{n\in\omega}\Pwf({^{J_n}}2)$ with $\sum_{n\in\omega}\frac{\abs{\psi(n)}}{2^{\abs{J_n}}}<\infty$, we have $[\varphi]_\infty\not\subseteq[\psi]_*$ (see \autoref{a13}). Thus, fix such $J$ and $\psi$.
\begin{itemize}
    \item[] \textbf{Case 1.} If $\exists^\infty n:\psi(n)=\emptyset$, then $[\psi]_*=\emptyset$ and consequently $\emptyset\neq[\varphi]_\infty\not\subseteq [\psi]_*$.
    \item[] \textbf{Case 2.} If $\forall^\infty n:\psi(n)\neq\emptyset$ then $\forall^\infty n:\psi(n)\subsetneq {^{J_n}2}$ by $\sum_{n\in\omega}\frac{\abs{\psi(n)}}{2^{\abs{J_n}}}<\infty$. Define increasing sequences $\seq{k_n}{n\in\omega}$, $\seq{j_n}{n\in\omega}$ such that 
    \begin{itemize}
        \item[$\bullet$] $\bigcup_{n\in\omega}I_{k_n}\cap \bigcup_{n\in\omega}J_{j_n}=\emptyset$, and
    \item[$\bullet$] $\varphi(k_n)$ is nonempty for all $n\in\omega$. \end{itemize}
Consider the set $X=\set{x\in\cantor}{\forall n\in\omega: x{\restriction}I_{k_n+1}\in\varphi(k_n+1)}$. Clearly, $X\subseteq [\varphi]_\infty$. On the other hand, pick $x\in\cantor$ such that 
\begin{itemize}
    \item[$\bullet$] $\forall n: x{\restriction}I_{k_n+1}\in\varphi(k_n+1)$ ($\Rightarrow x\in X$),
    \item[$\bullet$] $\forall n: x{\restriction}J_{j_n}\notin\psi(n)$ ($\Rightarrow x\notin [\psi]_*$).
\end{itemize}
Hence, $X\not\subseteq [\psi]_*$.
\end{itemize}
 \end{proof}

\section{Monotonicity}\label{sec:mon}

In the current section we investigate what requirement between two pairs $(I,\varepsilon)$ and $(I',\varepsilon')$ has to be posed in order to achieve $\Swf_{I,\varepsilon}\subseteq \Swf_{I',\varepsilon'}$. Similarly for $\Ewf_{I,\varepsilon}\subseteq \Ewf_{I',\varepsilon'}$. However, we have to review the terminology on relational systems. 

The standard sources for the terminology on relational systems are~\cite{blass,vo93}. We say that $\Rbf=\la X, Y, \sqsubset\ra$ is a \textit{relational system} if it consists of two non-empty sets $X$ and $Y$, and a~relation~$\sqsubset$.
\begin{enumerate}[label=(\arabic*)]
    \item A set $F\subseteq X$ is \emph{$\Rbf$-bounded} if $(\exists y\in Y) (\forall x\in F)\ x \sqsubset y$. 
    \item A set $D\subseteq Y$ is \emph{$\Rbf$-dominating} if $(\forall x\in X) (\exists y\in D)\ x \sqsubset y$. 
\end{enumerate}

We associate two cardinal characteristics with the relational system $\Rbf$: 
\begin{align*}
\bfrak(\Rbf)&:=\min\set{|F|}{\text{$F\subseteq X$ is $\Rbf$-unbounded}},
&&\text{the \emph{unbounding number of\/ $\Rbf$}, and}\\
\dfrak(\Rbf)&:=\min\set{|D|}{\text{$D\subseteq Y$ is $\Rbf$-dominating}},
&&\text{the \emph{dominating number of\/ $\Rbf$}.}
\end{align*}

The dual of $\Rbf$ is defined by $\Rbf^\perp:=\la Y, X, \sqsubset^\perp\ra$ where $y\sqsubset^\perp x$ iff $x\not\sqsubset y$. Note that $\bfrak(\Rbf^\perp)=\dfrak(\Rbf)$ and $\dfrak(\Rbf^\perp)=\bfrak(\Rbf)$.

The cardinal $\bfrak(\Rbf)$ may be undefined, in which case we write $\bfrak(\Rbf) = \infty$, likewise for $\dfrak(\Rbf)$. Concretely, $\bfrak(\Rbf) = \infty$ iff $\dfrak(\Rbf) =1$; and $\dfrak(\Rbf)= \infty$ iff $\bfrak(\Rbf) =1$.

Cardinal characteristics associated with ideals can be characterized by relational systems:

\begin{example}\label{exm:Iwf}
For $\Iwf\subseteq\pts(X)$, define the relational systems:
\begin{enumerate}[label=\rm(\arabic*)]
    \item $\Iwf:=\la\Iwf,\Iwf,{\subseteq}\ra$, which is a directed partial order when $\Iwf$ is closed under unions (e.g.\ an ideal),
    
    \item $\Cbf_\Iwf:=\la X,\Iwf,{\in}\ra$.
\end{enumerate}
Whenever $\Iwf$ is an ideal on $X$,
\begin{enumerate}[label=\rm(\alph*)]
     \item $\bfrak(\Iwf)=\add(\Iwf)$,

    \item $\dfrak(\Iwf)=\cof(\Iwf)$,

    \item $\dfrak(\Cbf_\Iwf)=\cov(\Iwf)$,

    \item $\bfrak(\Cbf_\Iwf)=\non(\Iwf)$.
    
\end{enumerate}
\end{example}

We shall extensively use a so-called Tukey connection,\footnote{Some authors use ``Galois-Tukey connection".} which is a practical tool to determine relations between cardinal characteristics.

\begin{definition}\label{def:Tukey}
Let $\Rbf=\la X,Y,\sqsubset\ra$ and $\Rbf'=\la X',Y',\sqsubset'\ra$ be two relational systems. We say that 
\[
(\Psi_-,\Psi_+)\colon\Rbf\to\Rbf'
\]
is a \emph{Tukey connection from $\Rbf$ into $\Rbf'$} if 
 $\Psi_-\colon X\to X'$ and $\Psi_+\colon Y'\to Y$ are functions such that
 \[
 (\forall x\in X)(\forall y'\in Y')\ 
 \Psi_-(x) \sqsubset' y' \Rightarrow x \sqsubset \Psi_+(y').
 \]
The \emph{Tukey order} between relational systems is defined by
$\Rbf\leqT\Rbf'$ iff there is a Tukey connection from $\Rbf$ into $\Rbf'$. \emph{Tukey equivalence} is defined by $\Rbf\eqT\Rbf'$ iff $\Rbf\leqT\Rbf'$ and $\Rbf'\leqT\Rbf$.
\end{definition}

The crucial assertion connecting Tukey order and cardinal invariants is the following one.

\begin{fact}\label{cor:Tukeyval}
Let\/ $\Rbf=\la X, Y,\sqsubset\ra$ and\/ $\Rbf'=\la X', Y',\sqsubset^{\prime}\ra$ be relational systems. Then
\begin{enumerate}[label=\rm(\alph*)]
    \item $\Rbf\leqT\Rbf'$ implies\/ $(\Rbf')^\perp\leqT\Rbf^\perp$.
    \item\label{Tukeyval:b}
    $\Rbf\leqT\Rbf'$ implies\/ $\bfrak(\Rbf')\leq\bfrak(\Rbf)$ and\/ $\dfrak(\Rbf)\leq\dfrak(\Rbf')$.
    \item $\Rbf\eqT\Rbf'$ implies\/ $\bfrak(\Rbf')=\bfrak(\Rbf)$ and\/ $\dfrak(\Rbf)=\dfrak(\Rbf')$.
    \qed
\end{enumerate}
\end{fact}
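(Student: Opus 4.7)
The plan is to prove the three items in order (a), (b), (c), since (c) is immediate from (b), and (b) can be established independently of (a). The arguments are standard manipulations of the definition of Tukey connection.

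For \ref{cor:Tukeyval}(a), given a Tukey connection $(\Psi_-,\Psi_+)\colon\Rbf\to\Rbf'$, the natural candidate for a Tukey connection $(\Phi_-,\Phi_+)\colon(\Rbf')^\perp\to\Rbf^\perp$ is obtained simply by swapping roles: set $\Phi_-:=\Psi_+\colon Y'\to Y$ and $\Phi_+:=\Psi_-\colon X\to X'$. Recall $(\Rbf')^\perp=\la Y',X',\sqsubset'^\perp\ra$ and $\Rbf^\perp=\la Y,X,\sqsubset^\perp\ra$, so one must verify that for every $y'\in Y'$ and $x\in X$, the implication $\Phi_-(y')\sqsubset^\perp x\Rightarrow y'\sqsubset'^\perp\Phi_+(x)$ holds. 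By definition this is $x\not\sqsubset\Psi_+(y')\Rightarrow\Psi_-(x)\not\sqsubset' y'$, which is the contrapositive of the defining property of $(\Psi_-,\Psi_+)$. So the verification is a one-line unfolding of definitions.

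For \ref{cor:Tukeyval}(b), first I will show $\dfrak(\Rbf)\leq\dfrak(\Rbf')$ by proving that if $D\subseteq Y'$ is $\Rbf'$-dominating then $\Psi_+[D]\subseteq Y$ is $\Rbf$-dominating. Indeed, for any $x\in X$ there is $y'\in D$ with $\Psi_-(x)\sqsubset' y'$, and then the Tukey property yields $x\sqsubset\Psi_+(y')$; since $|\Psi_+[D]|\leq|D|$ the inequality follows. Dually, to prove $\bfrak(\Rbf')\leq\bfrak(\Rbf)$, I will show that if $F\subseteq X$ is $\Rbf$-unbounded then $\Psi_-[F]\subseteq X'$ is $\Rbf'$-unbounded: if some $y'\in Y'$ bounded $\Psi_-[F]$, then every $x\in F$ would satisfy $\Psi_-(x)\sqsubset' y'$ and hence $x\sqsubset\Psi_+(y')$, contradicting the unboundedness of $F$. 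Alternatively, this bounding inequality follows directly from (a) combined with the dominating inequality applied to $(\Rbf')^\perp\leqT\Rbf^\perp$, since $\bfrak(\Rbf)=\dfrak(\Rbf^\perp)$.

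Finally, \ref{cor:Tukeyval}(c) is obtained at once: Tukey equivalence $\Rbf\eqT\Rbf'$ gives Tukey connections in both directions, so (b) applied twice yields both inequalities $\bfrak(\Rbf)\leq\bfrak(\Rbf')\leq\bfrak(\Rbf)$ and likewise for $\dfrak$. I do not anticipate any genuine obstacle here; the only mild subtlety is bookkeeping the directions of the arrows and the swap between $\Rbf$ and its dual, which is why I would prove (a) first and then reuse it to obtain the bounding part of (b) from its dominating part, keeping the presentation symmetric and short.
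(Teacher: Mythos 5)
Your proof is correct and complete; all three items are verified by straightforward unfolding of the definitions, and your observation that (b)'s bounding half follows from (a) plus the dominating half is the standard slick route. The paper itself states this as a known Fact with no proof supplied (note the \verb|\qed| immediately closing the statement), so there is nothing in the paper to compare against — your argument is precisely the routine verification the authors are implicitly invoking.
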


Hence, we can proceed to a~question about the role of the parameters in families $\Swf_{I,\varepsilon}$, i.e., what kind of (natural) relation between partitions or elements of $\ell^1_+$ guarantees the relation (inclusion) between families $\Swf_{I,\varepsilon}$. The same question may be asked for $\Ewf_{I,\varepsilon}$.

\begin{definition}\label{a4}
Define the following relation on $\Ior$:
   \[ I \sqsubseteq J \text{ iff } \forall^\infty\, n<\omega\ \exists\, m<\omega\colon I_m\subseteq J_n.
   \]
Note that $\sqsubseteq$ is a directed preorder on $\Ior$, so we think of $\Ior$ as the relational system with the relation $\sqsubseteq$. Also notice that we can consider the family of all partitions of all cofinite subsets of $\omega$ instead of $\Ior$.\footnote{ This is a simple consequence of the fact that $\Ior$ is a $\sqsubseteq$-cofinal subset in such a family. Hence, its bounding and dominating cardinal characteristics are the same as $\bfrak(\Ior)$ and $\dfrak(\Ior)$, respectively. Thus, most of the time, by $\Ior$ we mean the partition of some cofinite subset of $\omega$.}    In Blass~\cite{blass} it is proved that $\Ior \eqT \baire$.

\end{definition}

 The second relation on $\Ior$ is the refinement relation $\refin{}{}$ defined by $\refin{I}{J}$ for $I,J\in \Ior$ iff $$\forall^\infty n \in \omega\ \exists F\in [\omega]^{<\omega}:J_n=\bigcup_{k\in F}I_k,$$
  i.e., all but finitely many $J_n$'s are finite unions of some $I_k$'s. Note that $\refin{I}{J}$ implies $\forall^\infty k \ \exists n : I_k\subseteq J_n$. Also, if $\refin{I}{J}$, then $I\sqsubseteq J$. For $I\sqsubseteq J$ we define $\subin{I}{J}{n}=\set{k\in\omega}{I_k\subseteq J_n}$.

Unfortunately, as we shall see in \autoref{sqrelation_not_working}, none of these relations is sufficient when it comes to inclusions between the studied $\sigma$-ideals on its own. However, after considering some reasonable additional assumptions they might come in useful. On the other hand, we show that the standard relation $\leq^*$ between elements of $\ell^1_+$ yields the inclusion between the families $\Swf_{I,\varepsilon}$ and $\Ewf_{I,\varepsilon}$ without a need for additional assumptions.
\begin{prop}\label{a3}
\mbox{}
Let  $\varepsilon, \varepsilon'\in\ell^1_+$ and $I, J\in\Ior$.
\begin{enumerate}[label=\normalfont(\alph*)]
    \item\label{a3a} If $\varepsilon\leq^* \varepsilon'$, then $\Swf_{I,\varepsilon}\subseteq \Swf_{I,\varepsilon'}$ and $\Ewf_{I,\varepsilon}\subseteq \Ewf_{I,\varepsilon'}$.
\item\label{a3b} Let $k\geq 1$. If $\varepsilon\leq^*\varepsilon'\leq^*k\cdot \varepsilon$, then $\Swf_{I,\varepsilon}=\Swf_{I,\varepsilon'}$ and $\Ewf_{I,\varepsilon}=\Ewf_{I,\varepsilon'}$.\
\item\label{a3d} If $I\sqsubseteq J$, $\varepsilon$ is decreasing and $\forall^\infty n: \min\subin{I}{J}{n}\geq n$,\footnote{This happens, e.g., when $\exists^\infty n: \abs{\set{k}{J_n\cap I_k\neq \emptyset}}\geq 2$.} then $\Ewf_{I,\varepsilon}\subseteq \Ewf_{J,\varepsilon}$.
    \item\label{a3c} If $\refin{I}{J}$ and $\forall^\infty n: \sum_{k\in \subin{I}{J}{n}}\varepsilon_k\leq \varepsilon_n$, then $\Swf_{I,\varepsilon}\subseteq \Swf_{J,\varepsilon}$.
\end{enumerate}
\end{prop}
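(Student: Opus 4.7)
I would handle the four parts in the order \ref{a3a}, \ref{a3b}, \ref{a3d}, \ref{a3c}. For \ref{a3a}, the observation is that enlarging $\varepsilon$ pointwise (eventually) only relaxes the defining limit condition of $\Sigma_{I,\varepsilon}$: if $\lim_n |\varphi(n)|/(2^{|I_n|}\varepsilon_n)=0$ and $\varepsilon_n\leq \varepsilon'_n$ for all but finitely many $n$, then the sequence with $\varepsilon'_n$ in the denominator is eventually dominated by the original one, hence also tends to $0$. So $\Sigma_{I,\varepsilon}\subseteq \Sigma_{I,\varepsilon'}$, and the same witness $\varphi$ delivers both the $[\varphi]_\infty$ and $[\varphi]_*$ inclusions. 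Part \ref{a3b} follows by applying \ref{a3a} in both directions, after noting the trivial identity $\Sigma_{I,k\varepsilon}=\Sigma_{I,\varepsilon}$ for $k\geq 1$, since multiplying the denominator by a positive constant does not affect vanishing of the limit.

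For \ref{a3d}, take $X\in\Ewf_{I,\varepsilon}$ witnessed by $\varphi\in\Sigma_{I,\varepsilon}$ with $X\subseteq[\varphi]_*$. Using $I\sqsubseteq J$, for each sufficiently large $n$ select some $m(n)\in\subin{I}{J}{n}$; by hypothesis $m(n)\geq n$, so $m(n)\to\infty$. Define
\[
\psi(n)=\set{y\in{}^{J_n}2}{y\restriction I_{m(n)}\in\varphi(m(n))},
\]
so that $|\psi(n)|/2^{|J_n|}=|\varphi(m(n))|/2^{|I_{m(n)}|}$. Monotonicity of $\varepsilon$ combined with $m(n)\geq n$ gives $\varepsilon_n\geq\varepsilon_{m(n)}$, whence
\[
\frac{|\psi(n)|}{2^{|J_n|}\,\varepsilon_n}\leq\frac{|\varphi(m(n))|}{2^{|I_{m(n)}|}\,\varepsilon_{m(n)}}\longrightarrow 0.
\]
Thus $\psi\in\Sigma_{J,\varepsilon}$, and the inclusion $[\varphi]_*\subseteq[\psi]_*$ is immediate: whenever $x\in[\varphi]_*$, the condition $x\restriction I_{m(n)}\in\varphi(m(n))$ holds for all but finitely many $n$.

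For \ref{a3c}, carry out an analogous but more generous construction: given $\varphi\in\Sigma_{I,\varepsilon}$ with $X\subseteq[\varphi]_\infty$, define
\[
\psi(n)=\set{y\in{}^{J_n}2}{\exists k\in\subin{I}{J}{n}:\,y\restriction I_k\in\varphi(k)}
\]
for $n$ large enough that $J_n$ decomposes as the disjoint union of the intervals $I_k$ with $k\in\subin{I}{J}{n}$. A union bound gives $|\psi(n)|/2^{|J_n|}\leq\sum_{k\in\subin{I}{J}{n}}|\varphi(k)|/2^{|I_k|}$. Fix $N$: for large $k$ one has $|\varphi(k)|/2^{|I_k|}<\varepsilon_k/N$ by \autoref{remark_basic}, and since each $I_k$ is contained in at most one $J_n$, $\min\subin{I}{J}{n}\to\infty$ with $n$, so the estimate applies to every $k\in\subin{I}{J}{n}$ for $n$ large. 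Combined with the hypothesis $\sum_{k\in\subin{I}{J}{n}}\varepsilon_k\leq\varepsilon_n$ this yields $|\psi(n)|/(2^{|J_n|}\,\varepsilon_n)\leq 1/N$, so $\psi\in\Sigma_{J,\varepsilon}$. The containment $[\varphi]_\infty\subseteq[\psi]_\infty$ follows since each $m$ with $x\restriction I_m\in\varphi(m)$ yields the unique $n$ with $m\in\subin{I}{J}{n}$, and these $n$'s are cofinal in $\omega$ as each $J_n$ absorbs only finitely many $I_k$.

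The main obstacle is the coarsening step in \ref{a3c}: one must ensure that, for $n$ large, every index $k\in\subin{I}{J}{n}$ is already large enough for the uniform bound $|\varphi(k)|/2^{|I_k|}<\varepsilon_k/N$ to kick in. This is why the argument exploits the fact that each $I_k$ lies in at most one $J_n$ (forcing $\min\subin{I}{J}{n}\to\infty$) rather than relying on $\refin{I}{J}$ alone, and the parallel subtlety in \ref{a3d} is absorbed directly into the explicit hypothesis $\min\subin{I}{J}{n}\geq n$.
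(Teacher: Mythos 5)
Your proof is correct, and for parts \ref{a3a}, \ref{a3b}, and \ref{a3c} it is essentially the same as the paper's. In part \ref{a3d} you take a genuinely different (and leaner) route: the paper defines $\psi(n)$ by imposing the constraint $x\restriction I_k\in\varphi(k)$ simultaneously for \emph{all} $k\in\subin{I}{J}{n}$, computes $|\psi(n)|/2^{|J_n|}$ as a product $\prod_{k\in\subin{I}{J}{n}}|\varphi(k)|/2^{|I_k|}$, and then needs two auxiliary observations — that $\varepsilon_k<1$ eventually (so the product of $\varepsilon_k$'s is below the minimum) and that $\varepsilon$ is decreasing (so $\prod_k\varepsilon_k\leq\varepsilon_n$) — before arguing that a product of terms each below one and tending to zero itself tends to zero. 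You instead choose a single $m(n)\in\subin{I}{J}{n}$ and constrain $\psi(n)$ only on $I_{m(n)}$, which reduces the estimate to a single factor $|\varphi(m(n))|/(2^{|I_{m(n)}|}\varepsilon_{m(n)})$; the monotonicity of $\varepsilon$ and $m(n)\geq n$ are used once, cleanly, to replace $\varepsilon_n$ by $\varepsilon_{m(n)}$, and the fact that $m(n)\to\infty$ gives the limit for free. The cost is that your $\psi(n)$ is a larger set than the paper's, but that is harmless since it still lies in $\Sigma_{J,\varepsilon}$ and the inclusion $[\varphi]_*\subseteq[\psi]_*$ only becomes easier. It is a sound simplification that avoids the product manipulation entirely.
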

\begin{proof}
\noindent\ref{a3a}:  Observe that in such a case we have  $$\forall^\infty n: \frac{\abs{\varphi(n)}}{2^{\abs{I_n}}\cdot \varepsilon'_n}\leq\frac{\abs{\varphi(n)}}{2^{\abs{I_n}}\cdot \varepsilon_n},$$
  therefore $\Sigma_{I,\varepsilon}\subseteq \Sigma_{I,\varepsilon'}$.
  \par

\noindent\ref{a3b}: The ``$\subseteq$'' follows from \ref{a3a}. For the ``$\supseteq$'' part let WLOG $\varphi\in\Sigma_{I,k\cdot\varepsilon}$. We will show that $\varphi\in\Sigma_{I,\varepsilon}$. We have 
$$\lim_{n\to\infty}\frac{\abs{\varphi(n)}}{2^{\abs{I_n}}\cdot k\cdot \varepsilon_n}=\frac{1}{k}\cdot\lim_{n\to\infty}\frac{\abs{\varphi(n)}}{2^{\abs{I_n}}\cdot \varepsilon_n}=0.$$

\noindent\ref{a3d}: Let $\varphi\in\Sigma_{I,\varepsilon}$. For every $n$ define 
$$\psi(n)=\largeset{x\in {^{J_n}2}}{\forall k\in\subin{I}{J}{n}: x{\restriction}I_k\in\varphi(k)}=$$
$$=\largeset{x\in{^{J_n}2}}{\forall k: I_k\subseteq J_n\to x{\restriction}I_k\in\varphi(k) }.$$

Since $\varepsilon\to 0$, we have $\forall^\infty k: \varepsilon_k<1$. Furthermore, $\varepsilon$ is decreasing and so $\prod_{k\in\subin{I}{J}{n}}\varepsilon_k\leq\min\set{\varepsilon_k}{k\in\subin{I}{J}{n}}\leq\varepsilon_n$. Thus for all but finitely many $n\in\omega$ we have
$$\frac{\abs{\psi(n)}}{2^{\abs{J_n}}\cdot\varepsilon_n}=\frac{\prod_{k\in\subin{I}{J}{n}}\abs{\varphi(k)}\cdot 2^{\abs{J_n}-\sum_{j\in\subin{I}{J}{n}}\abs{I_j}}}{2^{\abs{J_n}}\cdot\varepsilon_n}=\frac{1}{\varepsilon_n}\cdot\prod_{k\in\subin{I}{J}{n}}\frac{\abs{\varphi(k)}}{2^{\abs{I_k}}}$$
$$\leq\prod_{k\in\subin{I}{J}{n}}\frac{\abs{\varphi(k)}}{2^{\abs{I_k}}\cdot\varepsilon_k}\to 0.$$
The latter term converges to zero since $\varphi\in\Sigma_{I,\varepsilon}$. Therefore, 
$\psi\in \Sigma_{J,\varepsilon}$. Note that $[\varphi]_*\subseteq [\psi]_*$. Indeed, if $x\in\cantor$ is such that $\forall^\infty k:x{\restriction}I_k\in \varphi(k)$, then $\forall^\infty n: x{\restriction}J_n\in\psi(n)$.

\noindent\ref{a3c}:     We will show that if $\varepsilon$ decreases sufficiently fast, then $\refin{I}{J}$ implies $S_{I,\varepsilon}\subseteq S_{J,\varepsilon}$. So, let $\varepsilon\in\ell^1_+$ be such that $\forall^\infty n: \sum_{k\in \subin{I}{J}{n}}\varepsilon_k\leq \varepsilon_n$. For any $\varphi\in \Sigma_{I,\varepsilon}$ set $$\psi(n)=\set{s\in {^{J_n}2}}{\exists k\in \subin{I}{J}{n}: s{\restriction} I_k\in \varphi(k)},$$
  i.e., $\psi(n)$ is a set of extensions of functions from $\bigcup_{k\in\subin{I}{J}{n}}\varphi(k)$ on $J_n$. 
  
  First, note that for any $N\in\omega$ we have that for all but finitely many $n\in\omega$, $$\frac{\abs{\psi(n)}}{2^{\abs{J_n}}}=\sum_{k\in\subin{I}{J}{n}}\frac{\abs{\varphi(k)}\cdot 2^{\abs{J_n\smallsetminus I_k}}}{2^{\abs{I_k}}\cdot 2^{\abs{J_n\smallsetminus I_k}}}=\sum_{k\in\subin{I}{J}{n}}\frac{\abs{\varphi(k)}}{2^{\abs{I_k}}}<\frac{\sum_{k\in\subin{I}{J}{n}}\varepsilon_k}{N}\leq \frac{\varepsilon_n}{N},$$
  thus, $\psi\in \Sigma_{J, \varepsilon}$.
  
  Notice that $[\varphi]_\infty\subseteq [\psi]_\infty$. Indeed, if $x\in {^\omega 2}$ is such that $\exists^\infty k:x{\restriction} I_k\in \varphi(k)$. Consider $\seq{n_k}{k\in\omega}$ such that $I_k\subseteq J_{n_k}$ for all but finitely many $k\in\omega$ (there is such a sequence by $\refin{I}{J}$). Then $x{\restriction} J_{n_k}$ is an extension of $x{\restriction} I_k\in \varphi(k)$ on $J_{n_k}$, that is, $x{\restriction} J_{n_k}\in \psi(n_k)$ for all but finitely many $k\in\omega$. Therefore, we get $\exists^\infty n: x{\restriction} J_n\in \psi(n)$.

  Since for any $\varphi\in \Sigma_{I,\varepsilon}$ we found $\psi\in \Sigma_{J,\varepsilon}$ with $[\varphi]_\infty\subseteq [\psi]_\infty$, we have $\Swf_{I,\varepsilon}\subseteq \Swf_{J,\varepsilon}$.
\end{proof}

    In \autoref{a3}, \ref{a3d} - \ref{a3c}, the relations $\sqsubseteq$ and $\sqsubseteq_R$ are not sufficient to guarantee the corresponding conclusions.    
    
    \begin{observation}\label{sqrelation_not_working}
    There are $I, J, \varepsilon$ such that $I\sqsubseteq J$ and $\Swf_{I,\varepsilon}\not\subseteq \Swf_{J,\varepsilon}$. The same is true for the relation $\sqsubseteq$ with $\Ewf_{I,\varepsilon}$, and for the relation $\sqsubseteq_R$ with both, $\Swf_{I,\varepsilon}$ and $\Ewf_{I,\varepsilon}$.
    \end{observation}

     \begin{proof}
     The assertions for the relation $\sqsubseteq_R$ can be proven similarly to the assertions for the relation $\sqsubseteq$. Thus we prove just the latter one. 
     
We shall show first that given any directed preorder $\preceq$ on $\Ior$, it is not necessarily true that $I\preceq J$ implies $\Swf_{I,\varepsilon}\subseteq \Swf_{J,\varepsilon}$ for any $\varepsilon\in\ell^1_+$. Assume that the implication holds. By the proof of~\cite[Lem.~1.3]{bartosmall}, there are $X,Y\in \Swf^\star$ such that $X\cup Y\notin \Swf^\star$. By~\autoref{a2}~\ref{a2b}, choose $I,I'\in\Ior$ and $\varepsilon,\varepsilon'\in \ell^1_+$ such that $X\in \Swf_{I,\varepsilon}$ and  $Y\in\Swf_{I',\varepsilon'}$. Define $\delta\in \ell^1_+$ by $\delta_n=\max\{\varepsilon_n,\varepsilon'_n\}$ for each $n$. Then $\Sigma_{I,\varepsilon}\subseteq \Sigma_{I,\delta}$, $\Sigma_{I',\varepsilon'}\subseteq \Sigma_{I',\delta}$, and $X\in \Swf_{I,\delta}$, $Y\in \Swf_{I',\delta}$. 

On the other hand, since $\preceq$ is a directed preorder, we can find a partition $J$ such that $I, I'\preceq J$. By the assumption, both $\Swf_{I,\delta}, \Swf_{I',\delta}\subseteq \Swf_{J,\delta}$, so we get $X, Y\in \Swf_{J,\delta}$. Therefore, $X\cup Y\in \Swf_{J,\delta}\subseteq \Swf^\star$, which is a contradiction. 
  
Regarding $\Ewf_{I,\varepsilon}$, we shall show that $I\sqsubseteq J$ does not necessarily imply $\Ewf_{I,\varepsilon}\subseteq \Ewf_{J,\varepsilon}$. That is, there are $I,J\in\Ior$ with $I\sqsubseteq J$ and $\varepsilon\in\ell^1_+$ such that $(I,\varepsilon)$ is $\Ewf$-contributive and $J$ is not $\Ewf$-contributive (even $\Swf^\star$-contributive). Thus, $\{\emptyset\}\neq\Ewf_{I,\varepsilon}\not\subseteq \Ewf_{J,\varepsilon}=\{\emptyset\}$. To show this, construct inductively $I$ such that $$\forall^\infty n: 2^{-\abs{I_n}}<\min\left\{\frac{2^{-\abs{I_{n-1}}}}{n}, 2^{-(n+1)}\right\}.$$ It is easy to construct such a partition -- each step takes sufficiently long $I_n$. Put $\varepsilon_n=2^{-\abs{I_{n-1}}}$ for $n\geq 2$. Let $I_0$ be an arbitrary finite interval (initial in $\omega$) of length at least $2$, $\abs{I_1}=2$, $\varepsilon_0=1$ and $\varepsilon_1=\frac{1}{2}$. Note that $\varepsilon_n<2^{-n}$ for all but finitely many $n\in\omega$, thus, $\varepsilon\in\ell^1_+$. Also, note that $\forall^\infty n: 2^{-\abs{I_n}}<\frac{\varepsilon_n}{n}$, therefore, by \autoref{charofrEIe} we have that $(I,\varepsilon)$ is $\Ewf$-contributive. Define $J$ such that $I_0=J_0\cup J_1$ and $J_n=I_{n-1}$ for all $n\in\omega$. Clearly, $I\sqsubseteq J$ (even $J\sqsubseteq I$). Moreover, $\forall^\infty n: 2^{-\abs{J_n}}=2^{-\abs{I_{n-1}}}=\varepsilon_n$. Thus, by \autoref{charofrEIe} and \autoref{charofrSIe} we have that $\Ewf_{J,\varepsilon}=\Swf_{J,\varepsilon}=\{\emptyset\}$. 
\end{proof}

The refinement relation has minimal usefulness regarding cardinal invariants since its bounding and dominating numbers are trivial. Denote by $\mathbf{D}_\mathrm{R}$ the relational system $\la \Ior,\Ior, \sqsubseteq_\mathrm{R} \ra$.

\begin{prop}\label{bad_rel}
$\bfrak(\mathbf{D}_\mathrm{R})=2$ and $\dfrak(\mathbf{D}_\mathrm{R})=\cfrak$.
\end{prop}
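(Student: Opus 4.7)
The plan is to handle the two equalities separately, starting with $\bfrak$. The lower bound $\bfrak(\mathbf{D}_\mathrm{R}) \geq 2$ is immediate because for any $I \in \Ior$ one has $I \sqsubseteq_\mathrm{R} I$, so the singleton $\{I\}$ is bounded. For the upper bound I would exhibit a pair of partitions with incompatible parity of left endpoints. Concretely, take $I = \{[2k, 2k+2) : k < \omega\}$ and $I' = \{\{0\}\} \cup \{[2k-1, 2k+1) : k \geq 1\}$, whose left-endpoint sets are $E^I = \{0, 2, 4, \ldots\}$ and $E^{I'} = \{0, 1, 3, 5, \ldots\}$. Any $J$ with $I \sqsubseteq_\mathrm{R} J$ and $I' \sqsubseteq_\mathrm{R} J$ would, by the definition of $\sqsubseteq_\mathrm{R}$, have each $J_n$ (for cofinitely many $n$) being a union of consecutive $I_k$'s and simultaneously of consecutive $I'_k$'s. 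The first forces the left endpoints of $J_n$ (for cofinitely many $n$) to be even, the second forces them to lie in $\{0\} \cup \{\text{odds}\}$. Combining these conditions leaves only finitely many admissible left endpoints for $J$, contradicting $J \in \Ior$. Hence $\{I, I'\}$ is $\mathbf{D}_\mathrm{R}$-unbounded and $\bfrak(\mathbf{D}_\mathrm{R}) \leq 2$.

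For $\dfrak$, my main tool is to recode $\mathbf{D}_\mathrm{R}$ via left endpoints. Associate to each $I \in \Ior$ the infinite set $E^I \subseteq \omega$ of left endpoints of its intervals; this gives a bijection $\Ior \cong [\omega]^\omega$. Unpacking the definition of $\sqsubseteq_\mathrm{R}$, one checks that $I \sqsubseteq_\mathrm{R} J$ iff $E^J \subseteq^* E^I$: the intervals of any partition are determined by the left endpoints, and $J_n$ being a finite union of consecutive $I_k$'s for cofinitely many $n$ is equivalent to both endpoints of $J_n$ belonging to $E^I$ for cofinitely many $n$, i.e.\ to $E^J \subseteq^* E^I$. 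The upper bound $\dfrak(\mathbf{D}_\mathrm{R}) \leq \cfrak$ is trivial since $|\Ior| = \cfrak$. For the lower bound, suppose for contradiction that $\{J^\alpha : \alpha < \kappa\}$ is $\mathbf{D}_\mathrm{R}$-dominating with $\kappa < \cfrak$. Fix an almost disjoint family $\{A_\xi : \xi < \cfrak\} \subseteq [\omega]^\omega$. For each $\xi$, apply domination to the partition $I^\xi$ with $E^{I^\xi} = A_\xi$ to obtain $\alpha(\xi) < \kappa$ with $E^{J^{\alpha(\xi)}} \subseteq^* A_\xi$. The assignment $\xi \mapsto \alpha(\xi)$ must be injective, since $\alpha(\xi) = \alpha(\eta)$ with $\xi \neq \eta$ would give $E^{J^{\alpha(\xi)}} \subseteq^* A_\xi \cap A_\eta$, which is finite by almost-disjointness, contradicting the fact that $E^{J^{\alpha(\xi)}}$ is infinite. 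Therefore $\kappa \geq \cfrak$.

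I do not expect a serious obstacle. The bounding half is a one-line parity argument once the right pair is written down, and the dominating half reduces, after the left-endpoint recoding, to a completely standard almost-disjoint-family diagonalization (one may also phrase it as showing that the $\pi$-weight of $\Pwf(\omega)/\mathrm{fin}$ is $\cfrak$). The only slightly finicky point is the verification of the equivalence $I \sqsubseteq_\mathrm{R} J \iff E^J \subseteq^* E^I$, which is essentially bookkeeping.
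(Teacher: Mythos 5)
Your proof is correct and follows essentially the same route as the paper's: the paper establishes the unboundedness via a pair of offset interval partitions and the dominating bound via a claim that partitions with almost disjoint endpoint sets admit no common $\sqsubseteq_{\mathrm{R}}$-upper bound, then applies it to an almost disjoint family of size $\cfrak$. Your explicit recoding $\refin{I}{J} \iff E^J \subseteq^* E^I$ is just a cleaner, self-contained reformulation of that claim, and the remaining AD-family diagonalization is identical.
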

\begin{proof}
First, it is very easy to see that the relation $\sqsubseteq_\mathrm{R}$ is not a directed preorder. To see this, let $f\in {^\omega\omega}$ be an increasing function. Put $I_n=[f(2n), f(2n+2))$, $J_0=[f(0),f(3))$, and for $n\geq 1$ put $J_n=[f(2n+1), f(2n+3))$. We get overlapping partitions for which we cannot find a common upper bound. Hence, $\bfrak(\mathbf{D}_\mathrm{R})=2$.

We shall formulate previous facts as a claim since we use it in the next construction. First, for any $I=\set{[i_k, i_{k+1})}{k\in\omega}\in\Ior$ define the set of endpoints of its intervals $\ep{I}=\set{i_k}{k\in\omega}$.

  \begin{claim}\label{overlap}
      If $ I, J\in\Ior$ satisfy $\abs{\ep{I}\cap \ep{J}}<\omega$, then no $K\in\Ior$ is an $\sqsubseteq_\mathrm{R}$-upper bound for both $I$ and $J$, i.e., for any $K^I,K^J\in\Ior$ such that $\refin{I}{K^I}$ and $\refin{J}{K^J}$ we have $K^I\neq K^J$.
  \end{claim}
\begin{proof}
Assume not. Let $K\in\Ior$ be such that $\refin{I,J}{K}$. Then we have
$$\exists m_0\in\omega\ \forall m>m_0: K_m\text{ is a union of some }I_k\text{'s},$$
$$\exists m_1\in\omega\ \forall m>m_1: K_m\text{ is a union of some }J_n\text{'s}.$$

By $\abs{\mathrm{ep}(I)\cap \mathrm{ep}(J)}<\omega$ we have also
$$\exists M\in\omega\ \forall k,n>M:$$
 $$  \min(I_k)\neq \min(J_n), \max(J_n) \text{ and } \max(I_k)\neq  \min(J_n), \max(J_n).$$
Hence, 
$$\exists m_2\in\omega\ \forall m>m_2:\min\subin{I}{K}{m}>M\text{ and }\min\subin{J}{K}{m}>M.$$

Now consider any $m>\max\{m_0, m_1, m_2\}$. For this $m$, $K_m$ is a union of $J_n$'s for $n\in\subin{J}{K}{m}$. Since $m>m_0$, $K_m$ is also a union of $I_k$'s for $k\in \subin{I}{K}{m}$. But this is not possible by the fact that $m>m_2$, which implies that for any $k\in \subin{I}{K}{m}$ and $n\in\subin{J}{K}{m}$, $I_k$ and $J_n$ do not share common endpoints.  
\end{proof}

We will show that $\dfrak(\sqsubseteq_\mathrm{R})=\cfrak$. Let $\Dwf$ be a $\sqsubseteq_\mathrm{R}$-dominating family in $\Ior$. Consider an arbitrary AD family $\Awf$ of cardinality $\cfrak$. Assign to any $A=\set{a_i\in\omega}{i\in\omega}\in\Awf$ (naturally enumerated in an increasing way) a partition $I_A=\set{[a_i, a_{i+1})}{i\in\omega}$ and denote $\Iwf_\Awf=\set{I_A}{A\in \Awf}$. Note that for any $A\neq B\in \Awf$, $I_A$ and $I_B$ satisfy $\abs{\ep{I_A}\cap \ep{I_B}}<\infty$. By \autoref{overlap}, no $K\in\Ior$ is bound for two distinct elements of $\Iwf_\Awf$, let alone infinitely many of them. That is, for any $A\in \Awf$ there is $K_A\in \Dwf$, a bound for $I_A$, and by \autoref{overlap}, this $K_A$ is an upper bound for no other $I_B$, $B\neq A\in \Awf$. Therefore $\abs{\set{K_A}{A\in\Awf}}=\cfrak$ and since $\set{K_A}{A\in\Awf}\subseteq \Dwf$, $\abs{\Dwf}=\cfrak$ as well.
\end{proof}

We shall establish a basic tool for showing inclusions between the studied $\sigma$-ideals, i.e., we show that the inclusion strongly depends on coordinate-wise inclusions.
\begin{lemma}\label{subsetpointwiseS}
        Let $I\in\Ior$, $\varepsilon\in\ell^1_+$, and $\varphi, \psi\in \Sigma_{I,\varepsilon}$. The following statements are equivalent.
        \begin{enumerate}[label=\rm(\arabic*)]
            \item\label{subsetpointwise1} $\forall^\infty n\in\omega: \varphi(n)\subseteq \psi(n)$.
            \item\label{subsetpointwise2} $[\varphi]_\infty\subseteq [\psi]_\infty$.
        \end{enumerate}
    \end{lemma}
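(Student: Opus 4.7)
The implication \ref{subsetpointwise1}$\Rightarrow$\ref{subsetpointwise2} is immediate from the definition of $[\cdot]_\infty$: if $x \upharpoonright I_n \in \varphi(n)$ for infinitely many $n$ and $\varphi(n) \subseteq \psi(n)$ for all but finitely many $n$, then the two ``infinitely many'' sets must have infinite intersection, whence $x \in [\psi]_\infty$.

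For \ref{subsetpointwise2}$\Rightarrow$\ref{subsetpointwise1} the plan is to proceed by contraposition: assume $A := \set{n \in \omega}{\varphi(n) \setminus \psi(n) \neq \emptyset}$ is infinite, and manufacture a witness $x \in [\varphi]_\infty \setminus [\psi]_\infty$ by assembling $x$ block-by-block on the intervals $I_n$. For each $n \in A$ choose some $s_n \in \varphi(n) \setminus \psi(n)$; this gives a value on $I_n$ which simultaneously lies in $\varphi(n)$ and avoids $\psi(n)$, which is exactly what we need on infinitely many blocks.

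The key step is handling blocks $n \notin A$: here we must still ensure $x \upharpoonright I_n \notin \psi(n)$, otherwise $x$ could accidentally land in $[\psi]_\infty$. This is where I would invoke the hypothesis $\psi \in \Sigma_{I,\varepsilon}$: since $\lim_{n\to\infty} \frac{|\psi(n)|}{2^{|I_n|}\varepsilon_n}=0$ and $\varepsilon_n \to 0$, we have $|\psi(n)|/2^{|I_n|} \to 0$, so in particular $\psi(n) \subsetneq {}^{I_n}2$ for all but finitely many $n$. Therefore we can pick some $t_n \in {}^{I_n}2 \setminus \psi(n)$ for all $n$ past some threshold $N_0$.

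Now define $x \in \cantor$ by setting $x \upharpoonright I_n := s_n$ for $n \in A$ with $n \geq N_0$, $x \upharpoonright I_n := t_n$ for $n \notin A$ with $n \geq N_0$, and arbitrarily elsewhere (including on the finite complement of $\bigcup_n I_n$, if $I$ is only a partition of a cofinite set). By construction, $x \upharpoonright I_n \in \varphi(n)$ for every sufficiently large $n \in A$, so $x \in [\varphi]_\infty$ because $A$ is infinite; and $x \upharpoonright I_n \notin \psi(n)$ for every $n \geq N_0$, so $x \notin [\psi]_\infty$. This contradicts $[\varphi]_\infty \subseteq [\psi]_\infty$, completing the proof. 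No real obstacle arises beyond remembering that $\Sigma_{I,\varepsilon}$-membership of $\psi$ is needed to guarantee $\psi(n) \neq {}^{I_n}2$ eventually.
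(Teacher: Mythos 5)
Your proof is correct and takes essentially the same approach as the paper: proving (2)$\Rightarrow$(1) by contraposition and assembling a witness $x$ block-by-block, choosing $x\restriction I_n$ in $\varphi(n)\smallsetminus\psi(n)$ on the infinitely many ``bad'' blocks and outside $\psi(n)$ elsewhere. You are slightly more explicit than the paper in justifying that ${}^{I_n}2\smallsetminus\psi(n)\neq\emptyset$ eventually (using $\psi\in\Sigma_{I,\varepsilon}$ and $\varepsilon\in\ell^1_+$), a point the paper leaves implicit.
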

    \begin{proof}
       \ref{subsetpointwise1}~$\Rightarrow$~\ref{subsetpointwise2} is clear: $x\in[\varphi]_\infty$ iff $\exists^\infty n: x{\restriction} I_n\in\varphi(n)$. Since $\forall^\infty n: \varphi(n)\subseteq\psi(n)$ we have that $\exists^\infty n: x{\restriction} I_n\in\psi(n)$.

        $\neg$\ref{subsetpointwise1}~$\Rightarrow\neg$\ref{subsetpointwise2}: By the assumption there exists an increasing sequence $\seq{k_n}{n\in\omega}$ such that $\varphi(k_n)\smallsetminus \psi(k_n)\neq \emptyset$. Consider any $x\in {^\omega2}$ such that $x{\restriction} I_{k_n} \in\varphi(k_n)\smallsetminus \psi(k_n)$ for all $n\in\omega$, and $x{\restriction} I_m\notin \psi(m)$ for all but finitely many $m\in \omega\smallsetminus \set{k_n}{n\in\omega}$. Then clearly $x\in [\varphi]_\infty$ but $x\notin [\psi]_\infty$.
    \end{proof}

\begin{remark}
    Note that for any $\varphi\in\Sigma_{I,\varepsilon}$ we have $[\varphi]_*\neq\emptyset$ if and only if $\forall^\infty n\in\omega: \varphi(n)\neq\emptyset$.
\end{remark}

      \begin{lemma}\label{subsetpointwiseE}
        Let $I\in\Ior$, $\varepsilon\in\ell^1_+$, and let $\varphi, \psi\in \Sigma_{I,\varepsilon}$ be such that $\forall^\infty n\in\omega: \varphi(n)\neq\emptyset$. The following statements are equivalent.
        \begin{enumerate}[label=\rm(\arabic*)]
            \item\label{subsetpointwiseE1} $\forall^\infty n\in\omega: \varphi(n)\subseteq \psi(n)$.
            \item\label{subsetpointwiseE2} $[\varphi]_*\subseteq [\psi]_*$.
            \item\label{subsetpointwiseE3} $[\varphi]_\infty\subseteq [\psi]_\infty$.
        \end{enumerate}
    \end{lemma}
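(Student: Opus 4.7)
The plan is to reduce most of the work to the preceding \autoref{subsetpointwiseS} and handle the new clause (2) through a direct construction that exploits the hypothesis $\forall^\infty n:\varphi(n)\neq\emptyset$.

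First I would observe that the equivalence \ref{subsetpointwiseE1}~$\Leftrightarrow$~\ref{subsetpointwiseE3} is exactly \autoref{subsetpointwiseS}, so nothing new needs to be proved there. It therefore suffices to establish \ref{subsetpointwiseE1}~$\Leftrightarrow$~\ref{subsetpointwiseE2}.

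The direction \ref{subsetpointwiseE1}~$\Rightarrow$~\ref{subsetpointwiseE2} is immediate: if $x\in[\varphi]_*$, then $\forall^\infty n: x{\restriction} I_n\in\varphi(n)\subseteq\psi(n)$, so $x\in[\psi]_*$. For the contrapositive of \ref{subsetpointwiseE2}~$\Rightarrow$~\ref{subsetpointwiseE1}, assume there is an increasing sequence $\seq{k_n}{n\in\omega}$ with $\varphi(k_n)\smallsetminus\psi(k_n)\neq\emptyset$, and pick $s_n\in\varphi(k_n)\smallsetminus\psi(k_n)$ for each $n$. Let $N\in\omega$ be such that $\varphi(m)\neq\emptyset$ for every $m\geq N$, and for each such $m\notin\set{k_n}{n\in\omega}$ choose $t_m\in\varphi(m)$. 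Define $x\in\cantor$ by
\[
x{\restriction} I_m=\begin{cases} s_n & \text{if } m=k_n \text{ for some } n,\\ t_m & \text{if } m\geq N \text{ and } m\notin\set{k_n}{n\in\omega},\\ \text{arbitrary} & \text{otherwise}.\end{cases}
\]
Then $\forall^\infty m: x{\restriction} I_m\in\varphi(m)$, so $x\in[\varphi]_*$, while $x{\restriction} I_{k_n}=s_n\notin\psi(k_n)$ for infinitely many $n$, hence $x\notin[\psi]_*$, contradicting \ref{subsetpointwiseE2}.

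The only subtle point, and the reason the hypothesis $\forall^\infty n:\varphi(n)\neq\emptyset$ is needed, is precisely the ability to choose the values $t_m$ from $\varphi(m)$ for almost every $m$ outside $\set{k_n}{n\in\omega}$; without this hypothesis $[\varphi]_*$ could be empty and the implication \ref{subsetpointwiseE2}~$\Rightarrow$~\ref{subsetpointwiseE1} would fail trivially. No estimates involving $\varepsilon$ or $|I_n|$ are required, so the argument is purely combinatorial once the right witness is constructed.
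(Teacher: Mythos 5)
Your proof is correct and follows essentially the same approach as the paper: it reduces $(1)\Leftrightarrow(3)$ to \autoref{subsetpointwiseS}, proves $(1)\Rightarrow(2)$ directly, and establishes $\neg(1)\Rightarrow\neg(2)$ by constructing the same kind of witness $x$ (agreeing with some $s_n\in\varphi(k_n)\smallsetminus\psi(k_n)$ on the intervals $I_{k_n}$ and lying in $\varphi(m)$ for a.e.\ other $m$). Your closing remark correctly identifies the role of the hypothesis $\forall^\infty n:\varphi(n)\neq\emptyset$.
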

    \begin{proof}
      \ref{subsetpointwise1}~$\Rightarrow$~\ref{subsetpointwiseE2} is clear: $x\in[\varphi]_*$ iff $\forall^\infty n: x{\restriction} I_n\in\varphi(n)$. Again, using the fact that $\forall^\infty n: \varphi(n)\subseteq\psi(n)$ we have that $\forall^\infty n: x{\restriction} I_n\in\psi(n)$.

       $\neg$\ref{subsetpointwise1}~$\Rightarrow\neg$\ref{subsetpointwiseE2}: By the assumption $\forall^\infty n: \varphi(n)\neq\emptyset$ and there exists an increasing sequence $\seq{k_n}{n\in\omega}$ such that $\varphi(k_n)\smallsetminus \psi(k_n)\neq \emptyset$. Consider any $x\in {^\omega2}$ such that $x{\restriction} I_{k_n} \in\varphi(k_n)\smallsetminus \psi(k_n)$ for all $n\in\omega$, and $x{\restriction} I_m\in \varphi(m)$ for all but finitely many $m\in \omega\smallsetminus \set{k_n}{n\in\omega}$. Then clearly $x\in [\varphi]_*$ but $x\notin [\psi]_*$.

       \ref{subsetpointwiseE3} is equivalent with \ref{subsetpointwiseE1} by \autoref{subsetpointwiseS}.
    \end{proof}

 \begin{remark}
    Note that for the implication \ref{subsetpointwiseE1}~$\Rightarrow$~\ref{subsetpointwiseE2} in \autoref{subsetpointwiseE}, we do not need to assume that $\varphi(n)\neq\emptyset$ for all but finitely many $n$'s.
\end{remark}

With respect to \autoref{a3}, parts \ref{a3a} and \ref{a3b}, we shall demonstrate that for distinct $\varepsilon,\varepsilon'$ we may have distinct $\Swf_{I,\varepsilon}, \Swf_{I,\varepsilon'}$. Similarly for $\Ewf_{I,\varepsilon}$. That is, it is not necessarily true that $\Swf_{I,\varepsilon}=\Swf_{I,\varepsilon'}$ and $\Ewf_{I,\varepsilon}=\Ewf_{I,\varepsilon'}$ for $\varepsilon\neq^*\varepsilon'\in\ell^1_+$. 
\par
The above fact can be easily proven by considering $I,\varepsilon,\varepsilon'$ such that $(I,\varepsilon)$ is not $\Swf^\star$-contributive (subsequently nor $\Ewf$-contributive) but $(I,\varepsilon')$ is $\Ewf$-contributive (subsequently also $\Swf^\star$-contributive). Indeed, consider $I$ such that $\abs{I_n}=n$ and define $\varepsilon,\varepsilon'$ by $\varepsilon_n=\frac{1}{2^n}$ and $\varepsilon_n'=\frac{n}{2^n}$ for every $n\in\omega$. Then for any $A\in[\omega]^\omega$ we have $\forall^\infty n\in A: 2^{\abs{I_n}}\cdot \varepsilon_n= \frac{2^n}{2^n}=1$, thus, $\lim_{n\in A}2^{\abs{I_n}}\cdot \varepsilon_n\neq \infty$. By \autoref{charofrSIe}, $(I,\varepsilon)$ is not $\Swf^\star$-contributive. On the other hand, $\lim_{n\to\infty} 2^{\abs{I_n}}\cdot\varepsilon_n'=\frac{2^{\abs{I_n}}\cdot n}{2^{\abs{I_n}}}=n\to \infty$. By \autoref{charofrEIe} we have that $(I,\varepsilon')$ is $\Ewf$-contributive. 
\par
The question is, whether $\Swf_{I,\varepsilon}$ or $\Ewf_{I,\varepsilon}$, once they are nontrivial, can differ w.r.t. $\varepsilon$. 
We will answer this question in the following example.

\begin{observation}
    $\Swf_{I,\varepsilon}\neq\Swf_{I,\varepsilon'}$ and $\Ewf_{I,\varepsilon}\neq\Ewf_{I,\varepsilon'}$ for any $I$ with $\abs{I_n}\geq (n+1)^3$ and $\varepsilon,\varepsilon'$ defined by
    $$\varepsilon_n=\frac{1}{(n+1)^{2}}+\frac{n}{2^n} \hspace{1cm}\text{and}\hspace{1cm} \varepsilon_n'=\frac{1}{(n+1)^{3}}.$$ 
\end{observation}
\begin{proof}
The proof for $\Ewf_{I,\varepsilon}$ is similar to the proof for $\Swf_{I,\varepsilon}$. Thus we shall prove the assertion just for $\Swf_{I,\varepsilon}$.

 Note that both $(I,\varepsilon)$ and $(I,\varepsilon')$ are $\Ewf$-contributive. 
     Let
     $$\xi_n=\max\largeset{m\in\omega}{\frac{m}{2^{\abs{I_n}}}<\frac{1}{(n+1)^2}}.$$
    \noindent \begin{claim}
        $\forall K\in\omega\ \exists n_K\in\omega\ \forall n>n_K:\frac{\xi_n}{2^{\abs{I_n}}\cdot K}\geq\frac{1}{(n+1)^3}$.
    \end{claim}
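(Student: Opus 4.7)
My plan is to exploit the fact that $\xi_n$ is essentially $\lfloor 2^{|I_n|}/(n+1)^2 \rfloor$, so up to an additive error of $1$ we have $\xi_n \approx 2^{|I_n|}/(n+1)^2$. The desired inequality $\frac{\xi_n}{2^{|I_n|}\cdot K}\geq\frac{1}{(n+1)^3}$ is equivalent to $\xi_n\cdot(n+1)^3 \geq K\cdot 2^{|I_n|}$, and since $(n+1)^3 = (n+1)\cdot(n+1)^2$, the ``main term'' $\frac{2^{|I_n|}}{(n+1)^2}\cdot(n+1)^3 = (n+1)\cdot 2^{|I_n|}$ dominates $K\cdot 2^{|I_n|}$ as soon as $n > K$. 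The only thing to check is that the rounding error coming from $\xi_n \geq \frac{2^{|I_n|}}{(n+1)^2}-1$ does not spoil this.

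Concretely, first extract from the maximality in the definition of $\xi_n$ the bound
\[
\xi_n \;\geq\; \frac{2^{|I_n|}}{(n+1)^2} - 1,
\]
using that $\xi_n+1$ must violate the defining inequality. Then, fixing $K\in\omega$ and restricting attention to $n>K$, reduce the target inequality to
\[
\Bigl(\frac{2^{|I_n|}}{(n+1)^2}-1\Bigr)(n+1)^3 \;\geq\; K\cdot 2^{|I_n|},
\]
which after rearrangement becomes
\[
2^{|I_n|}\cdot \frac{n+1-K}{(n+1)^3} \;\geq\; 1.
\]
For $n>K$ the factor $\frac{n+1-K}{(n+1)^3}$ is positive but only polynomially small, whereas by the standing hypothesis $|I_n|\geq (n+1)^3$ the term $2^{|I_n|}$ grows doubly-exponentially, so the inequality holds for all sufficiently large $n$. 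Choosing $n_K$ to be any integer past the point where $2^{(n+1)^3}\geq \frac{(n+1)^3}{n+1-K}$ for all $n>n_K$ finishes the proof.

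I do not expect any real obstacle here; the entire content of the claim is the trivial comparison ``$2^{|I_n|}$ eventually dwarfs polynomial expressions in $n$ and $K$,'' with the only mildly subtle point being that we cannot afford to lose more than an additive constant when passing from $\xi_n$ to $2^{|I_n|}/(n+1)^2$. That is handled by the one-line observation that $\xi_n+1 \geq 2^{|I_n|}/(n+1)^2$.
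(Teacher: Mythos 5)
Your proof is correct and takes essentially the same approach as the paper's. Both proofs start from the same key bound (the paper writes it as $\frac{\xi_n}{2^{|I_n|}} \geq \frac{1}{(n+1)^2} - \frac{1}{2^{|I_n|}}$, you as $\xi_n \geq \frac{2^{|I_n|}}{(n+1)^2} - 1$, which are the same statement), and both reduce the claim to the observation that the exponential term eventually overwhelms the polynomial $(n+1)^3$; the only cosmetic difference is that the paper first weakens $2^{|I_n|}$ to $2^n$, while you keep $2^{|I_n|}$ and invoke $|I_n|\geq(n+1)^3$ directly.
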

    \begin{proof}
        Notice that for all $n\in\omega$ we have 
        $$\frac{\xi_n}{2^{\abs{I_n}}}\geq\frac{1}{(n+1)^2}-\frac{1}{2^{\abs{I_n}}}\geq \frac{1}{(n+1)^2}-\frac{1}{2^n}.$$
        It is easy to show that $\forall K\in\omega\ \forall^\infty n\in\omega: \frac{1}{(n+1)^2}-\frac{1}{2^n}\geq \frac{K}{(n+1)^3}$, since for positive $n$, this inequality is equivalent with
        $$2^n(n-K+1)\geq (n+1)^3,$$
        and $2^n$ increases faster than $(n+1)^3$.
    \end{proof}
We may assume that the sequence $\seq{n_K}{K\in\omega}$ is increasing. Define $\varphi$ such that $\abs{\varphi(i)}=\lceil{\frac{\xi_i}{K}}\rceil$ for $i\in [n_K,n_{K+1})$. Note that $\forall^\infty n: \varphi(n)\neq\emptyset$. We will show that $\varphi\in\Sigma_{I,\varepsilon}$. 

Let $N\in\omega$. Then for all but finitely many $K>N$ and for all $i\in [n_K,n_{K+1})$,
$$\frac{\abs{\varphi(i)}}{2^{\abs{I_i}}}=\frac{\lceil\frac{\xi_i}{K}\rceil}{2^{\abs{I_i}}}\leq \frac{\frac{\xi_i}{K}+1}{2^{\abs{I_i}}}=\frac{\xi_i}{2^{\abs{I_i}}\cdot K}+\frac{1}{2^{\abs{I_i}}}\leq \frac{1}{(i+1)^2\cdot K}+\frac{i}{2^i\cdot K}<\frac{1}{(i+1)^2\cdot N}+\frac{i}{2^i\cdot N}.$$
Thus, by the remark below \autoref{a1}, $\varphi\in\Sigma_{I,\varepsilon}$.
\par
On the other hand, for any $K\in\omega$ and for any $i\in[n_K,n_{K+1})$ we have
$$\frac{\abs{\varphi(i)}}{2^{\abs{I_i}}}=\frac{\lceil\frac{\xi_i}{K}\rceil}{2^{\abs{I_i}}}\geq\frac{\xi_i}{2^{\abs{I_i}}\cdot K}\geq\frac{1}{(i+1)^3}.$$
This yields $[\varphi]_\infty\not\in\Swf_{I,\varepsilon'}$, since otherwise there is $\psi\in\Sigma_{I,\varepsilon'}$ such that $[\varphi]_\infty\subseteq[\psi]_\infty$. However, by \autoref{subsetpointwiseS} we have $\varphi(n)\subseteq\psi(n)$ for all but finitely many $n$'s, and we obtain
$$\frac{\abs{\psi(i)}}{2^{\abs{I_i}}}\geq\frac{\abs{\varphi(i)}}{2^{\abs{I_i}}}\geq\frac{1}{(i+1)^3}=\varepsilon_i.$$
\end{proof}

\section{Additivity and cofinality of the new ideals}\label{sec:zfc}

\newcommand{\twosmall}{2-small$^\star$~coding}

In this section, we will study the cardinals $\add(\Ewf_{I,\varepsilon})$ and $\add(\Swf_{I,\varepsilon})$. We will be mostly concerned with proving~\autoref{ThmM:ac}, \autoref{a12}, and~\autoref{Thm:f}~\ref{Thm:f:a}. 

We begin with the following combinatorial lemma that will be used to prove~\autoref{a12}. 

\begin{lemma}\label{nonemptyphi}
    Let $(I,\varepsilon)\in \Ior\times\ell^1_+$ be an $\Ewf$-contributive pair. Then for any $\varphi\in \Sigma_{I,\varepsilon}$ there is $\bar{\varphi}\in\Sigma_{I,\varepsilon}$ such that $\varphi(n)\subseteq \bar{\varphi}(n)$ and $ \bar{\varphi}(n)\neq\emptyset$ for all but finitely many $n\in\omega$.
\end{lemma}
\begin{proof}
    If $\varphi(n)=\emptyset$ take $\bar{\varphi}(n)=\{\boldsymbol{0}\}$. Otherwise put $\bar{\varphi}(n)=\varphi(n)$. We will show that $\bar{\varphi}\in\Sigma_{I,\varepsilon}$. Putting $v=\set{n\in\omega}{\varphi(n)=\emptyset}$ we have
    \begin{align*}
       & \forall n\in \omega\smallsetminus v: \frac{\abs{\bar{\varphi}(n)}}{2^{\abs{I_n}}}=\frac{\abs{\varphi(n)}}{2^{\abs{I_n}}},\text{ and} \\
       & \forall n\in v: \frac{\abs{\bar{\varphi}(n)}}{2^{\abs{I_n}}}=\frac{1}{2^{\abs{I_n}}}. 
    \end{align*}
   Let $N\in\omega$. By the fact that $\varphi\in\Sigma_{I,\varepsilon}$ and by \autoref{charofrEIe} we have 
   \begin{align*}
      & \forall^\infty n\in \omega\smallsetminus v: \frac{\abs{\bar{\varphi}(n)}}{2^{\abs{I_n}}}<\frac{\varepsilon_n}{N},\text{ and}\\
    &  \forall^\infty n\in v: \frac{\abs{\bar{\varphi}(n)}}{2^{\abs{I_n}}}=\frac{1}{2^{\abs{I_n}}}<\frac{\varepsilon_n}{N}.
   \end{align*}
    Consequently, $\forall^\infty n\in\omega: \frac{\abs{\bar{\varphi}(n)}}{2^{\abs{I_n}}}<\frac{\varepsilon_n}{N}$. 
    \end{proof}

Note that we may assume that the smallest base of $\Ewf_{I,\varepsilon}\neq \{\emptyset\}$ consists only of sets $[\varphi]_*$ such that $\forall^\infty n: \varphi(n)\neq \emptyset$.

    \begin{prop}
    \label{SEcomparison}
        For any $\Ewf$-contributive $I\in\Ior$, $\varepsilon\in\ell^1_+$ we have 
       $\Swf_{I,\varepsilon}\eqT \Ewf_{I,\varepsilon}$.
         In particular, $\cof(\Swf_{I,\varepsilon})=\cof(\Ewf_{I,\varepsilon})$ and      $\add(\Ewf_{I,\varepsilon})=\add(\Swf_{I,\varepsilon}).$
    \end{prop}
    \begin{proof}
$``\leqT":$ For any $X\in\Swf_{I,\varepsilon}$ there is $\varphi_X\in\Sigma_{I,\varepsilon}$ such that $X\subseteq [\varphi_X]_\infty$ and $\forall^\infty n: \varphi_X(n)\neq \emptyset$, see \autoref{nonemptyphi}. Define $\Psi_-\colon \Swf_{I,\varepsilon}\to\Ewf_{I,\varepsilon}$ by $\Psi_-(X)=[\varphi_X]_*$. For any $Y\in\Ewf_{I,\varepsilon}$ there is $\psi_Y\in\Sigma_{I,\varepsilon}$ such that $Y\subseteq[\psi_Y]_*$. Define $\Psi_+\colon\Ewf_{I,\varepsilon}\to\Swf_{I,\varepsilon}$ by $\Psi_+(Y)=[\psi_Y]_\infty$. 

Let $X\in\Swf_{I,\varepsilon}$ and $Y\in\Ewf_{I,\varepsilon}$ be such that $\Psi_-(X)=[\varphi_X]_*\subseteq Y$. Since $Y\subseteq [\psi_Y]_*$, we have $[\varphi_X]_*\subseteq [\psi_Y]_*$. By \autoref{subsetpointwiseE} we have $X\subseteq [\varphi_X]_\infty\subseteq[\psi_Y]_\infty=\Psi_+(Y)$.

$``\geqT":$ To see this just switch the roles of $\Psi_-$ and $\Psi_+$ from the first part of this proof.
    \end{proof}

It is well-know that $\add(\Nwf)$ and $\cof(\Nwf)$ can be characterized using slaloms. We shall show the connection between localization and anti-localization cardinals based on slaloms and the studied $\sigma$-ideals, but first, we need the basic terminology from localization and anti-localization theory. For more details and further references, see \cite{CM23}.

\begin{definition}\label{defloc}
Given a sequence of non-empty sets $b = \seq{b(n)}{n\in\omega}$ and $h\colon \omega\to\omega$, define 
\begin{align*}
 \prod b &:= \prod_{n\in\omega}b(n),  \\
 \Swf(b,h) &:= \prod_{n\in\omega} [b(n)]^{\leq h(n)}.
\end{align*}
For two functions $x\in\prod b$ and $\varphi\in\Swf(b,h)$ write  
\[x\,\in^*\varphi \textrm{\ iff\ }\forall^\infty n\in\omega:x(n)\in \varphi(n),\] and 
\[x\,\in^\infty\varphi\textrm{\ iff\ }\exists^\infty n\in\omega:x(n)\in \varphi(n).\]
The negations of $x\in^*\varphi$ and $x\in^\infty \varphi$ are denoted by $x\notin^*\varphi$ and $x\notin^\infty\varphi$ respectively, i.e., $x\notin^*\varphi$ iff $\exists^\infty n\in\omega: x(n)\notin \varphi(n)$ and $x\notin^\infty \varphi$ iff $\forall^\infty n\in\omega: x(n)\notin \varphi(n)$.

We set
\begin{align*}
 \blc_{b,h}&=\min\set{|F|}{F\subseteq \prod b\;\&\;\neg\exists \varphi \in \Swf(b,h)\,\forall x \in F:x\in^* \varphi},  \\
 \dlc_{b,h}&=\min\set{|R|}{R\subseteq \Swf(b,h)\;\&\;\forall x \in \prod b\,\exists \varphi\in R:x\in^* \varphi},\\
 \balc_{b,h}&=\min\set{|S|}{S\subseteq \Swf(b,h)\;\&\;\neg\exists x \in \prod b\,\forall \varphi\in S\,:x\notin^\infty \varphi},\\
 \dalc_{b,h}&=\min\set{|E|}{E\subseteq \prod b\;\&\;\forall \varphi \in \Swf(b,h)\,\exists x \in E\,:x\notin^\infty \varphi}.
\end{align*}
Denote $\Lc(b,h) := \la\prod b,\Scal(b,h),\in^*\ra$ and $\aLc(b,h) := \la\Scal(b,h),\prod b,\not\ni^\infty\ra$. So we have $\blc_{b,h}=\bfrak(\Lc(b,h))$, $\balc_{b,h}=\bfrak(\aLc(b,h))$, and similarly for the dominating numbers.
\end{definition}

When $b$ is the constant sequence $\omega$, we use the notation $\Lc(\omega,h)$, $\aLc(\omega,h)$ and we denote the associated cardinal characteristics by $\blc_{\omega,h}$, $\dlc_{\omega,h}$, $\balc_{\omega,h}$, $\dalc_{\omega,h}$.

\begin{theorem}[Bartoszyński~\cite{b}, see also~\cite{CM23}]\label{addbloc}
If $h\in {^\omega\omega}$ and $\lim_{n\to\infty}h(n)=\infty$. Then $\blc_{\omega,h}=\add(\Nwf)$ and $\dlc_{\omega,h}=\cof(\Nwf)$.
\end{theorem}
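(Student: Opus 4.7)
The plan is to establish the Tukey equivalence $\Lc(\omega,h)\eqT\Nwf$ of relational systems (with $\Nwf$ viewed as the directed partial order $\la\Nwf,\Nwf,{\subseteq}\ra$), from which \autoref{cor:Tukeyval} together with \autoref{exm:Iwf} immediately yields both equalities $\blc_{\omega,h}=\bfrak(\Nwf)=\add(\Nwf)$ and $\dlc_{\omega,h}=\dfrak(\Nwf)=\cof(\Nwf)$. The first reduction step is to show that $\blc_{\omega,h}$ and $\dlc_{\omega,h}$ do not depend on the specific $h$ as long as $h\to\infty$. Given two such functions $h$ and $h'$, I would choose a partition of $\omega$ into consecutive blocks $\seq{B_n}{n\in\omega}$ whose lengths grow fast enough that eventually $h'(n)\geq h(k)\cdot|B_k|$ for every $n\in B_k$; a slalom of width $h$ indexed by $k$ can then be spread coordinate-by-coordinate across the block $B_k$ into a slalom of width $h'$, and conversely, showing $\Lc(\omega,h)\eqT\Lc(\omega,h')$. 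So it suffices to prove the theorem for one convenient choice, say $h(n)=n$.

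For the direction $\Lc(\omega,h)\leqT\Nwf$, I would fix a recursive enumeration $\seq{\sigma_k}{k\in\omega}$ of $2^{<\omega}$ and to each $x\in\baire$ associate the null set $N_x:=\bigcap_m\bigcup_{k\geq m}[\sigma_{x(k)}]$, after first translating $x$ into a function so that $\sum_k\mu([\sigma_{x(k)}])<\infty$. To each null set $N$ I would associate the slalom $\varphi_N(n)$ defined by reading off the finitely many indices that can occur in any canonical summable cover of $N$ at stage $n$. This is arranged so that $N_x\subseteq N$ forces $x\in^*\varphi_N$. For the reverse direction $\Nwf\leqT\Lc(\omega,h)$ one uses the standard fact that every null $X\subseteq\cantor$ is contained in a set of the form $\bigcap_n\bigcup_{k\geq n}[\sigma_{f_X(k)}]$ with $\sum_k\mu([\sigma_{f_X(k)}])<\infty$, thereby coding $X$ by a single $f_X\in\baire$. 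Given $\varphi\in\Swf(\omega,h)$, define $N_\varphi$ to be the union of all such coded null sets as $f$ ranges over the functions with $f\in^*\varphi$; then $f_X\in^*\varphi$ will force $X\subseteq N_\varphi$, as required.

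The main obstacle, and the one that forces the careful choice of coding, is verifying that $N_\varphi$ is itself null. Since $\varphi(n)$ may contain up to $h(n)$ codes and $h$ is allowed to grow arbitrarily (even very fast), one must choose the enumeration $\seq{\sigma_k}{k\in\omega}$ together with a ``geometric schedule'' so that whenever $k$ is indexed at level $n$ the basic clopen $[\sigma_k]$ has measure at most $2^{-n}/h(n)$; then $\mu(N_\varphi)\leq\sum_n h(n)\cdot 2^{-n}/h(n)<\infty$ and a tail estimate gives $\mu(N_\varphi)=0$. Once both Tukey reductions are checked, $\Lc(\omega,h)\eqT\Nwf$ follows, and the theorem is immediate. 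A final clean-up is that one has implicitly used $h\to\infty$ in step one (to absorb block lengths into $h$) and in the coding (to have infinitely many indices available at each level), which explains why the hypothesis cannot be dropped.
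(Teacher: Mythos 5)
The paper does not prove this theorem; it only cites Bartoszy\'nski and Cardona--Mej\'ia, so there is no internal argument to compare yours against. On its own terms your overall blueprint is the standard one: proving the Tukey equivalence $\Lc(\omega,h)\eqT\la\Nwf,\Nwf,\subseteq\ra$ indeed delivers both equalities at once via \autoref{cor:Tukeyval} and \autoref{exm:Iwf}, and your ``measure schedule,'' forcing the clopen coded at level $n$ to have measure at most $2^{-n}/h(n)$, is exactly the right device to make the slalom-to-null-set map land in $\Nwf$: one gets $N_\varphi\subseteq\bigcap_m\bigcup_{n\geq m}\bigcup_{j\in\varphi(n)}[\sigma_j]$ and the tail masses $\sum_{n\geq m}h(n)\cdot2^{-n}/h(n)$ collapse. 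That half is essentially sound.

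Two places need repair. First, the block-coding inequality ``$h'(n)\geq h(k)\cdot|B_k|$ for every $n\in B_k$'' has the wrong shape. When you spread a slalom $\varphi\in\Swf(\omega,h)$ across the block $B_k$ coordinate by coordinate, each new coordinate $n\in B_k$ inherits at most $|\varphi(k)|\leq h(k)$ candidates, not $h(k)\cdot|B_k|$; the factor $|B_k|$ never appears. What you actually want is to choose $B_k$ so far out along $\omega$ that the relevant width function already dominates (e.g.\ $h'(n)\geq h(k)$ for all $n\in B_k$, or dually, depending on which of $\Lc(\omega,h)\leqT\Lc(\omega,h')$ or its reverse you are reducing), which is possible precisely because that width function tends to infinity. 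Second, and more seriously, the direction $\Lc(\omega,h)\leqT\Nwf$ is where the entire content of Bartoszy\'nski's theorem lives, and ``reading off the finitely many indices that can occur in any canonical summable cover of $N$ at stage $n$'' is not a definition of $\Psi_+$. An arbitrary null set $N$ carries no canonical cover and no a priori bound of $h(n)$ on the number of admissible codes at level $n$; the statement that $N_x\subseteq N$ forces $x(n)$ into such a bounded pool for almost all $n$ is precisely the nontrivial localization lemma (cf.\ Bartoszy\'nski--Judah, \S2.3). The actual argument replaces $N$ by a specifically structured $F_\sigma$ superset, partitions $\omega$ into intervals, and uses a measure-counting argument on those intervals to extract the bounded pool --- and the hypothesis $h\to\infty$ is consumed exactly there. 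Without some version of that step your $\Psi_+$ is undefined and the Tukey connection does not close.
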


\begin{theorem}[Bartoszyński~\cite{bar}, Miller~\cite{Mil82}]\label{nonbaloc}
If $h\in\baire$ and $h\geq^*1$ then $\balc_{\omega,h}=\non(\Mwf)$ and $\dalc_{\omega,h}=\cov(\Mwf)$.
\end{theorem}

Before we state the relationships between the studied $\sigma$-ideals and localization cardinals, we need to expand the terminology concerning relational systems.
Let $\Rbf=\langle X,Y,\sqsubset \rangle$, $\Rbf'=\langle X',Y',\sqsubset' \rangle$ be relational systems. We define the \textit{sequential composition} $(\Rbf;\Rbf')=\langle X\times (X')^{Y},Y\times Y',\sqsubset^{\bullet}\rangle$, where the binary relation $(x,f)\sqsubset^{\bullet}(a,b)$ means $x\sqsubset a$ and $f(a)\sqsubset^{\prime} b$. More about this way of creating a relational system from existing relational systems can be found in \cite{blass}. Also, we will use the following abstraction of certain types of sequences, allowing us to think about complex objects as natural numbers. Here, we let $\baireinc=\set{f\in\baire}{f\text{ is increasing}}$.

\begin{remark}\label{bairecoding}
Let $I\in\Ior$. For any $a\in\baireinc$, the set $\prod_{n\in\omega}\left(\prod_{m\in[a(n),a(n+1))} {\Pwf(^{I_m}2})\right)$ can be thought of as a subset of ${^\omega\omega}$. 
Indeed, denote $\lambda_i=\abs{\Pwf\left({^{I_i }2} \right)}<\omega$. Then for each $a\in\baireinc$ and $n\in\omega$ there are $\kappa_{a,n}:=\prod_{i=a(n)}^{a(n+1)-1}\lambda_i$ finite sequences $t$ having the domain $[a(n),a(n+1))$ such that $t(i)\subseteq {^{I_i}2}$ for each $i\in [a(n),a(n+1))$. Enumerate these sequences in an arbitrary way to get $\largeset{t^{a,n}_K}{K<\kappa_{a,n}}$. We define $\Phi\colon\bigcup_{a\in\baireinc}\prod_{n\in\omega}\left(\prod_{m\in[a(n),a(n+1))} \Pwf({^{I_m}2})\right) \to \baire$ as follows. For $f\in \bigcup_{a\in\baireinc}\prod_{n\in\omega}\left(\prod_{m\in[a(n),a(n+1))} \Pwf({^{I_m}2})\right)$ there is a unique $a'$ such that $f\in \prod_{n\in\omega}\left(\prod_{m\in[a'(n),a'(n+1))} \Pwf({^{I_m}2})\right)$. We let $\Phi(f)(n)=K$ iff $f(n)=t^{a',n}_K$, i.e., if $f(n)$ is the $K$-th element with respect to the above-mentioned enumeration.\footnote{Notice that this can be done systematically. E.g., enumerate the sets $\Pwf\left({^{I_n}2}\right)=\set{Y_i}{i<\lambda_n}$. Assign to every sequence $S$ having the domain $[a(n), a(n+1))$ such that $t(i)\subseteq {^{I_i}2}$ for each $i\in [a(n),a(n+1))$, the vector $\boldsymbol{c}=\langle c_0, \dotso, c_{a(n+1)-a(n)-1}\rangle$, where the terms $c_i$ are such that $t(i)=Y_{c_i}$ for each $i<a(n+1)-a(n)-1$. Then assign $K$ to $f(n)$ iff the vector $\boldsymbol{c}$ corresponding to $f(n)$ is the $K$-th with respect to the lexicographical ordering.}
\end{remark}

The proof of the following result is based on \cite[Thm.~5.1.2]{bartjud}:

\begin{theorem}\label{hardtukey}
   Let $h\in{^\omega\omega}$ be defined by $h(n)=(n+1)^2$ for all $n\in\omega$. For any $I\in\Ior$ and $\varepsilon\in\ell^1_+$ the following holds.
  \begin{enumerate}[label=\normalfont(\alph*)]
        \item\label{addcofa} If $(I,\varepsilon)$ is $\Ewf$-contributive, then $ \Ewf_{I,\varepsilon}\leqT(\langle\baireinc,\baireinc,\leq^{*}\rangle;\Lc(\omega,h))$,
        \item\label{addcofb} If $(I,\varepsilon)$ is $\Swf$-contributive, then $ \Swf_{I,\varepsilon}\leqT(\langle\baireinc,\baireinc,\leq^{*}\rangle;\Lc(\omega,h))$.
    \end{enumerate}
\end{theorem}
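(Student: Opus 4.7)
The plan is to mirror the argument of~\cite[Thm.~5.1.2]{bartjud}, building a Tukey connection that simultaneously uses the domination ingredient of the sequential composition to select a suitable block partition and the localization ingredient to catch a code of $\varphi_X$ restricted to the chosen blocks. Given $X\in\Ewf_{I,\varepsilon}$, fix $\varphi_X\in\Sigma_{I,\varepsilon}$ with $X\subseteq[\varphi_X]_*$ and, invoking~\autoref{nonemptyphi}, $\varphi_X(k)\neq\emptyset$ for all but finitely many $k$. Using $\varphi_X\in\Sigma_{I,\varepsilon}$, choose $\alpha_X\in\baireinc$ so that for every $n\in\omega$ and every $k\geq\alpha_X(n)$ we have $|\varphi_X(k)|/2^{|I_k|}<\varepsilon_k/(n+1)^3$. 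For an arbitrary $a\in\baireinc$ set $B^a_n:=[a(n),a(n+1))$ and let $\Theta^a_n$ denote the finite collection of all $\varphi'\in\prod_{k\in B^a_n}\Pwf({}^{I_k}2)$ satisfying $|\varphi'(k)|/2^{|I_k|}<\varepsilon_k/(n+1)^3$ for every $k\in B^a_n$. Fix, for each pair $(a,n)$, an injective enumeration $\Theta^a_n\hookrightarrow\omega$ in the style of~\autoref{bairecoding}, and define $F_X(a)(n)\in\omega$ to be the code of $\varphi_X{\upharpoonright}B^a_n$ whenever $a(n)\geq\alpha_X(n)$ (so that this restriction lies in $\Theta^a_n$) and $0$ otherwise. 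Set $\Psi_-(X):=(\alpha_X,F_X)$.

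For the back map, given $(a,b)\in\baireinc\times\Scal(\omega,h)$ with $h(n)=(n+1)^2$, let $T^{a,b}_n\subseteq\Theta^a_n$ consist of those $\varphi'\in\Theta^a_n$ whose code under the fixed enumeration lies in $b(n)$. For $k\in B^a_n$ define
\[\psi^{a,b}(k):=\bigcup_{\varphi'\in T^{a,b}_n}\varphi'(k),\]
and set $\Psi_+(a,b):=[\psi^{a,b}]_*$. Since $|T^{a,b}_n|\leq|b(n)|\leq(n+1)^2$ and every $\varphi'\in T^{a,b}_n$ obeys $|\varphi'(k)|/2^{|I_k|}<\varepsilon_k/(n+1)^3$, we obtain
\[\frac{|\psi^{a,b}(k)|}{2^{|I_k|}\varepsilon_k}<\frac{1}{n+1}\qquad\text{for every }k\in B^a_n,\]
so $\psi^{a,b}\in\Sigma_{I,\varepsilon}$ and hence $\Psi_+(a,b)\in\Ewf_{I,\varepsilon}$.

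To verify the Tukey condition, assume $\alpha_X\leq^*a$ and $F_X(a)\in^*b$. For all sufficiently large $n$ both $a(n)\geq\alpha_X(n)$ and $F_X(a)(n)\in b(n)$ hold, which forces $\varphi_X{\upharpoonright}B^a_n\in T^{a,b}_n$. Consequently $\varphi_X(k)\subseteq\psi^{a,b}(k)$ for every $k$ in those eventual blocks, and~\autoref{subsetpointwiseE} delivers $X\subseteq[\varphi_X]_*\subseteq[\psi^{a,b}]_*=\Psi_+(a,b)$, establishing~\ref{addcofa}. For~\ref{addcofb}, I will run the identical construction without requiring $\varphi_X(k)\neq\emptyset$ eventually and replace $[\cdot]_*$ by $[\cdot]_\infty$ throughout; the final inclusion then follows from~\autoref{subsetpointwiseS} rather than~\autoref{subsetpointwiseE}, and the $\Swf^\star$-contributivity of $(I,\varepsilon)$ is exactly what is needed to guarantee that some $\varphi_X\in\Sigma_{I,\varepsilon}$ with $X\subseteq[\varphi_X]_\infty$ exists.

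The principal obstacle is calibrating the decay rate of $\varphi_X$ against the permissible slalom width $(n+1)^2$: the union of that many ``valid'' candidates in a single block $B^a_n$ must nevertheless remain in $\Sigma_{I,\varepsilon}$. The threshold $\varepsilon_k/(n+1)^3$ used to define $\Theta^a_n$ is engineered precisely to leave one extra factor of $(n+1)$ of slack to absorb the slalom size, which is what ultimately makes the estimate $|\psi^{a,b}(k)|/(2^{|I_k|}\varepsilon_k)<1/(n+1)$ converge to zero.
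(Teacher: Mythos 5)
Your proposal is correct and follows essentially the same approach as the paper's proof: you choose the same control sequence (your $\alpha_X$ is the paper's $k^X$), code $\varphi_X$ block-by-block along the partition determined by the dominating coordinate, filter the slalom's candidates by the same decay threshold $\varepsilon_k/(n+1)^3$, and obtain the estimate $|\psi^{a,b}(k)|/(2^{|I_k|}\varepsilon_k)<1/(n+1)$ from $|b(n)|\leq(n+1)^2$ before closing with \autoref{subsetpointwiseE} (resp.\ \autoref{subsetpointwiseS}). The only cosmetic difference is that you enumerate only the finite set $\Theta^a_n$ of ``decay-valid'' restrictions rather than all of $\prod_{m\in B^a_n}\Pwf({}^{I_m}2)$ and filter after the fact as the paper does, and your appeal to \autoref{nonemptyphi} is harmless but unnecessary since the forward implication of \autoref{subsetpointwiseE} does not require eventual nonemptiness.
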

\begin{proof}
\ref{addcofa}: We need to find $\Psi_-\colon\Ewf_{I,\varepsilon}\to\baireinc\times{}^{\baireinc}(\baire)$ and $\Psi_+\colon\baireinc\times\Swf(\omega,h)\to\Ewf_{I,\varepsilon}$. First, we are going to define $\Psi_-$. For $X\in \Ewf_{I,\varepsilon}$ there is $\varphi_X\in\Sigma_{I,\varepsilon}$ such that $X\subseteq [\varphi_X]_*$. Define $k^X=\seq{k^X_n}{n\in\omega}$ by $k^X_0=0$ and for $n\geq 1$
$$k_{n}^X=\min\largeset{m>k^X_{n-1}}{\forall j\geq m: \frac{\abs{\varphi_X(j)}}{2^{\abs{I_j}}}<\frac{\varepsilon_j}{(n+1)^3}}.$$ 
Notice that the existence of $k^X$ is guaranteed by the fact that $\varphi_X\in\Sigma_{I,\varepsilon}$.

We will define a function $F^X\colon \baireinc\to\bigcup_{a\in\baireinc}\prod_{n\in\omega}\left(\prod_{m\in[a(n),a(n+1))} {\Pwf(^{I_m}2)}\right)$ as follows. For any $b\in\baireinc$ let
$$F^X(b)(n)=\varphi_X{\restriction}[b(n),b(n+1))$$
for all $n\in\omega$. Put $$\Psi_-(X)=(k^X,\Phi\circ F^X),$$
where $\Phi$ is the mapping from \autoref{bairecoding}.

Now, we are going to define $\Psi_+$. Let $b\in\baireinc$ and $S\in\Swf(\omega,h)$. 
We define $\bar{S}_b\in\Swf(\omega,h)$ by
$$\bar{S}_b(n)=\largeset{t^{b,n}_K}{K\in S(n)\cap \kappa_{b,n} \text{ and } \forall j\in[b(n),b(n+1)):\frac{\abs{t^{b,n}_K(j)}}{2^{\abs{I_j}}}<\frac{\varepsilon_j}{(n+1)^3}},$$
where the function $\kappa_{b,n}$ and the enumeration $t_K^{b,n}$ are from \autoref{bairecoding}.

For every $n\in\omega$ and every $j\in [b(n),b(n+1))$ define
\begin{equation}\label{defpsiuni}
    \psi_{b,S}(j)=\bigcup_{t\in\bar{S}_b(n)} t(j).
\end{equation}

Note that $\psi_{b,S}\in\Sigma_{I,\varepsilon}$ since for $j\in[b(n),b(n+1))$ we have
$$\frac{\abs{\psi_{b,S}(j)}}{2^{\abs{I_j}}}< \abs{\bar{S}_b(n)}\cdot \frac{\varepsilon_j}{(n+1)^3}\leq \abs{S(n)}\cdot \frac{\varepsilon_j}{(n+1)^3}\leq (n+1)^2\cdot \frac{\varepsilon_j}{(n+1)^3}\leq \frac{\varepsilon_j}{n+1}.$$
Thus, we define $\Psi_+(b,S)=[\psi_{b,S}]_*$.

Let $X\in\Ewf_{I,\varepsilon}$ and $(b,S)\in \baireinc\times\Swf(\omega,h)$ be such that $\Psi_-(X)\sqsubset^\bullet (b,S)$, i.e., $k^X\leq^* b$ and $(\Phi\circ F^X)(b)\in^* S$. We shall show that $X\subseteq \Psi_+(b,S)$.

Since $k^X\leq^* b$ it follows that
\begin{equation}\label{domb}
   \forall^\infty n\in\omega\ \forall j\geq b(n): \frac{\abs{\varphi_X(j)}}{2^{\abs{I_j}}}<\frac{\varepsilon_j}{(n+1)^3}. 
\end{equation}
Denote by $\seq{K_n}{n\in\omega}$ the sequence $(\Phi\circ F^X)(b)$. By the fact that $\varphi_X\in\Sigma_{I,\varepsilon}$, it follows that
 $\varphi_X{\restriction}[b(n),b(n+1))\in\prod_{m\in[b(n),b(n+1))}\Pwf({^{I_m}2})$  for any $n\in\omega$. Therefore, we have
\begin{enumerate}[label=\normalfont(\roman*)]
    \item $\forall n\in\omega: K_n<\kappa_{b,n}$,
    \item $\forall n\in\omega: F^X(b)(n)=\varphi_X{\restriction}[b(n),b(n+1))=t^{b,n}_{K_n}$,
    \item\label{diffnot} $\forall n\in\omega\ \forall j\in[b(n),b(n+1)):\varphi_X(j)=t_{K_n}^{b,n}(j)$.
\end{enumerate}

Thus, it follows from $(\Phi\circ F^X)(b)=\seq{K_n}{n\in\omega}\in^* S$ that $\forall^\infty n\in\omega: K_n\in S(n)\cap\kappa_{b,n}$, by \eqref{domb} and \ref{diffnot} we have that $\forall^\infty n\in\omega: t^{b,n}_{K_n}=\varphi_X{\restriction}[b(n),b(n+1))\in\bar{S}_b(n)$. Consequently, $\forall^\infty j\in\omega :\varphi_X(j)\subseteq \psi_{b,S}(j)$ by \eqref{defpsiuni}. Applying \autoref{subsetpointwiseE} we get $X\subseteq [\varphi_X]_*\subseteq [\psi_{b,S}]_*=\Psi_+(b,S)$.

\ref{addcofb}: The proof is almost identical to part \ref{addcofa}, we just need to replace $\Ewf_{I,\varepsilon}$ with $\Swf_{I,\varepsilon}$, $[\cdot]_*$ with $[\cdot]_\infty$ and \autoref{subsetpointwiseE} with \autoref{subsetpointwiseS}.
\end{proof}

The following can be found in \cite{blass}.
\begin{fact}\label{productchar}
    Let $\Rbf=\langle X,Y,\sqsubset \rangle$ and $\Rbf'=\langle X',Y',\sqsubset' \rangle$ be relational systems. Then we have  $\dfrak(\Rbf;\Rbf')=\dfrak(\Rbf)\cdot\dfrak(\Rbf')$ and $\bfrak(\Rbf;\Rbf')=\min\{\bfrak(\Rbf),\bfrak(\Rbf')\}$.
\end{fact}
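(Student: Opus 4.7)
The plan is to prove each of the two equalities via matching upper and lower bounds, working under the assumption that all the cardinals involved are infinite (the degenerate cases, where some $\bfrak=\infty$ or $\dfrak=1$, reduce to trivial verifications). The dominating equality is the easier of the two: products of dominating families dominate the composition, and projections of a dominating family in the composition are dominating in each factor. The unbounding equality is slightly more delicate and will use constant functions as witnesses in one of the inequalities.

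For $\dfrak(\Rbf;\Rbf')\leq \dfrak(\Rbf)\cdot \dfrak(\Rbf')$ I would take $D\subseteq Y$ dominating in $\Rbf$ and $D'\subseteq Y'$ dominating in $\Rbf'$: given $(x,f)\in X\times (X')^{Y}$, first pick $a\in D$ with $x\sqsubset a$ and then $b\in D'$ with $f(a)\sqsubset' b$, so $D\times D'$ is $\sqsubset^\bullet$-dominating. For the reverse I would use the infinite-cardinal identity $\dfrak(\Rbf)\cdot\dfrak(\Rbf')=\max\{\dfrak(\Rbf),\dfrak(\Rbf')\}$ and show that any $\sqsubset^\bullet$-dominating $E\subseteq Y\times Y'$ projects to dominating families in both factors. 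Indeed, given $x\in X$ and any fixed $f$, any $(a,b)\in E$ dominating $(x,f)$ yields $a$ with $x\sqsubset a$; and given $x'\in X'$, fix $x_0\in X$ and consider the constant function $f_{x'}\equiv x'$, then any $(a,b)\in E$ dominating $(x_0,f_{x'})$ gives $x'=f_{x'}(a)\sqsubset' b$.

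For the unbounding equality I would first witness $\bfrak(\Rbf;\Rbf')\leq \min\{\bfrak(\Rbf),\bfrak(\Rbf')\}$ by two constructions. Fix any $f_0\in (X')^{Y}$; then for an unbounded $F\subseteq X$ of size $\bfrak(\Rbf)$, the family $F\times\{f_0\}$ is $\sqsubset^\bullet$-unbounded, since any putative bound $(a,b)$ would force $a$ to bound $F$ in $\Rbf$. Dually, fix $x_0\in X$ and, for each $x'$ in an unbounded $F'\subseteq X'$ of size $\bfrak(\Rbf')$, let $f_{x'}\equiv x'$; then $\{(x_0,f_{x'}):x'\in F'\}$ is unbounded, because a bound $(a,b)$ would give $x'=f_{x'}(a)\sqsubset' b$ for every $x'\in F'$. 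For the reverse direction, given $G\subseteq X\times (X')^{Y}$ with $|G|<\min\{\bfrak(\Rbf),\bfrak(\Rbf')\}$, the first projection has cardinality $<\bfrak(\Rbf)$ and is $\sqsubset$-bounded by some $a_0\in Y$; then $\{f(a_0):(x,f)\in G\}$ has cardinality $<\bfrak(\Rbf')$ and is $\sqsubset'$-bounded by some $b_0\in Y'$, and the pair $(a_0,b_0)$ $\sqsubset^\bullet$-bounds $G$.

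The only point that truly requires care is the use of $\dfrak(\Rbf)\cdot\dfrak(\Rbf')=\max\{\dfrak(\Rbf),\dfrak(\Rbf')\}$, which is immediate for infinite cardinals but needs a separate check in the degenerate cases flagged earlier (where $\bfrak$ or $\dfrak$ of some factor is $1$ or $\infty$); none of the five observations above actually break in those cases, so the proof reduces to assembling them.
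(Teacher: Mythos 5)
The paper gives no internal proof of \autoref{productchar}: it is quoted from Blass, so there is no ``paper's own proof'' to compare against. Your four upper/lower bound arguments are the expected ones and three of them are correct as written: the product of dominating families dominates the composition, the two unbounded families (with a fixed $f_0$, resp.\ constant functions $f_{x'}$) witness $\bfrak(\Rbf;\Rbf')\leq\min\{\bfrak(\Rbf),\bfrak(\Rbf')\}$, and the two-step bounding argument gives the reverse inequality for $\bfrak$.

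The only genuine gap is in the lower bound $\dfrak(\Rbf;\Rbf')\geq\dfrak(\Rbf)\cdot\dfrak(\Rbf')$. Your projection argument only shows that a dominating $E\subseteq Y\times Y'$ projects to dominating families in each coordinate, giving $|E|\geq\max\{\dfrak(\Rbf),\dfrak(\Rbf')\}$, and you then rewrite the right-hand side of the Fact as a $\max$ using $\kappa\cdot\lambda=\max\{\kappa,\lambda\}$. That identity holds only when the $\max$ is infinite, so your argument covers the cases used in this paper but not relational systems where both $\dfrak$'s are finite and $\geq2$ --- a case that is not among the ``degenerate'' ones ($\bfrak=\infty$ or $\dfrak=1$) you flagged. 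A proof that works uniformly: for $a\in Y$ let $E_a=\{b\in Y':(a,b)\in E\}$, and let $G=\{a\in Y:E_a\text{ is }\sqsubset'\text{-dominating}\}$. Then $G$ is $\sqsubset$-dominating: otherwise pick $x\in X$ with $\lnot(x\sqsubset a)$ for all $a\in G$, and $f\in(X')^Y$ with $f(a)$ not $\sqsubset'$-dominated by $E_a$ for $a\notin G$; this $(x,f)$ is not $\sqsubset^\bullet$-dominated by any $(a,b)\in E$. Hence $|E|\geq\sum_{a\in G}|E_a|\geq\dfrak(\Rbf)\cdot\dfrak(\Rbf')$ with no case split on finiteness. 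Since in every application in the paper the relevant $\dfrak$'s are uncountable, this gap is harmless here, but it should be closed if the Fact is to be stated (and proved) in full generality.
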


Recall that by \autoref{SEcomparison}, $\cof(\Swf_{I,\varepsilon})=\cof(\Ewf_{I,\varepsilon})$ and $\add(\Ewf_{I,\varepsilon})=\add(\Swf_{I,\varepsilon})$ for all $\Ewf$-contributive pairs $(I,\varepsilon)$. However, the next assertion holds for all $(I,\varepsilon)$. 

\begin{corollary}\label{a10}
For any $I\in\Ior$ and $\varepsilon\in\ell^1_+$ the following holds.
      \begin{enumerate}[label=\normalfont(\alph*)]
        \item\label{addEieNa} $\add(\Nwf)\leq \add(\Ewf_{I,\varepsilon}),\add(\Swf_{I,\varepsilon})$.
        \item\label{addEieNb} $\cof(\Ewf_{I,\varepsilon}),\cof(\Swf_{I,\varepsilon})\leq \cof(\Nwf)$.
    \end{enumerate}
\end{corollary}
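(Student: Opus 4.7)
The plan is to derive both inequalities from the Tukey reductions already provided by \autoref{hardtukey}, combined with Bartoszy\'nski's characterization of $\add(\Nwf)$ and $\cof(\Nwf)$ via slalom localization (\autoref{addbloc}). First I would dispose of the trivial cases: if $(I,\varepsilon)$ fails to be $\Ewf$-contributive (respectively $\Swf^\star$-contributive), then $\Ewf_{I,\varepsilon}=\{\emptyset\}$ (respectively $\Swf_{I,\varepsilon}=\{\emptyset\}$). By the convention recorded in \autoref{remark_basic} the additivity is $\infty$, and one checks directly from the definition that $\cof=1$ for the trivial ideal; both \ref{addEieNa} and \ref{addEieNb} hold vacuously in that situation.

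For the substantive case, fix an $\Ewf$-contributive pair $(I,\varepsilon)$ and put $h(n)=(n+1)^2$. By \autoref{hardtukey}\ref{addcofa},
\[\Ewf_{I,\varepsilon}\leqT \bigl(\langle\baireinc,\baireinc,\leq^*\rangle;\Lc(\omega,h)\bigr).\]
Invoking \autoref{cor:Tukeyval}\ref{Tukeyval:b}, together with the identifications $\add(\Ewf_{I,\varepsilon})=\bfrak(\Ewf_{I,\varepsilon})$ and $\cof(\Ewf_{I,\varepsilon})=\dfrak(\Ewf_{I,\varepsilon})$ from \autoref{exm:Iwf}, reduces the problem to computing the bounding and dominating numbers of the sequential composition on the right. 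By \autoref{productchar}, together with the standard facts $\bfrak(\langle\baireinc,\leq^*\rangle)=\bfrak$ and $\dfrak(\langle\baireinc,\leq^*\rangle)=\dfrak$, these equal
\[\min\{\bfrak,\blc_{\omega,h}\}\quad\text{and}\quad \dfrak\cdot\dlc_{\omega,h},\]
respectively. Since $h(n)\to\infty$, \autoref{addbloc} gives $\blc_{\omega,h}=\add(\Nwf)$ and $\dlc_{\omega,h}=\cof(\Nwf)$, and the ZFC inequalities $\add(\Nwf)\leq\bfrak$ and $\dfrak\leq\cof(\Nwf)$ from Cicho\'n's diagram collapse the minimum to $\add(\Nwf)$ and the product to $\cof(\Nwf)$. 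This yields $\add(\Nwf)\leq\add(\Ewf_{I,\varepsilon})$ and $\cof(\Ewf_{I,\varepsilon})\leq\cof(\Nwf)$.

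The argument for $\Swf_{I,\varepsilon}$ in the $\Swf^\star$-contributive case is verbatim, using \autoref{hardtukey}\ref{addcofb} in place of \ref{addcofa}. There is no genuine obstacle here: all the analytic content is packaged into \autoref{hardtukey}, so the present corollary reduces to a bookkeeping translation from a Tukey bound to cardinal invariants, exploiting the position of $\add(\Nwf)$ and $\cof(\Nwf)$ relative to $\bfrak$ and $\dfrak$ in Cicho\'n's diagram. The only point requiring a line of justification is the easy verification that $h(n)=(n+1)^2$ diverges, which is needed to apply \autoref{addbloc}.
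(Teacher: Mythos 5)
Your proposal is correct and follows essentially the same route as the paper's own proof: dispose of the non-contributive trivial cases by the conventions $\add=\infty$, $\cof=1$, and then pass the Tukey bound of \autoref{hardtukey} through \autoref{productchar} and \autoref{addbloc}, collapsing $\min\{\bfrak,\blc_{\omega,h}\}$ to $\add(\Nwf)$ and $\dfrak\cdot\dlc_{\omega,h}$ to $\cof(\Nwf)$ via the Cicho\'n inequalities $\add(\Nwf)\leq\bfrak$ and $\dfrak\leq\cof(\Nwf)$. The only difference is that you spell out the Tukey-to-cardinal translation (\autoref{cor:Tukeyval}, \autoref{exm:Iwf}) a bit more explicitly than the paper does, which is harmless.
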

\begin{proof}
We shall prove the assertion just for $\Ewf_{I,\varepsilon}$. The proof for $\Swf_{I,\varepsilon}$ is similar.  

If $(I,\varepsilon)$ is not $\Ewf$-contributive then $\add(\Nwf)\leq\infty=\add(\Ewf_{I,\varepsilon})$ and $\cof(\Ewf_{I,\varepsilon})=1\leq \cof(\Nwf)$. For $\Ewf$-contributive pairs we proceed as follows.

\ref{addEieNa}:  By~\autoref{hardtukey}~\ref{addcofa} and \autoref{productchar} we have $\min\{\bfrak,\blc_{\omega,h}\}\leq\add(\Ewf_{I,\varepsilon})$. Applying \autoref{addbloc} and the inequality $\add(\Nwf)\leq\bfrak$ we get $\min\{\bfrak,\blc_{\omega,h}\}=\min\{\bfrak,\add(\Nwf)\}=\add(\Nwf)$. 

\ref{addEieNb}: By~\autoref{hardtukey}~\ref{addcofa} and \autoref{productchar} we have $\cof(\Ewf_{I,\varepsilon})\leq \dfrak\cdot \dlc_{\omega,h}$. Applying \autoref{addbloc} and the fact that $\dfrak\leq \cof(\Nwf)$ we get $\dfrak\cdot \dlc_{\omega,h}=\dfrak\cdot\cof(\Nwf)=\cof(\Nwf)$. 
\end{proof}

\begin{corollary}\label{addS}
      For any $I\in\Ior$ and $\varepsilon\in\ell^1_+$,
      \vspace{0.1cm}
      \begin{enumerate}[label=\normalfont(\alph*)]
          \item $\add(\Nwf)\leq\min\set{\add(\Swf_{I,\varepsilon})}{I\in\Ior,\varepsilon\in\ell^1_+}$,
          \item $\cof(\Nwf)\geq\sup\set{\cof(\Swf_{I,\varepsilon})}{I\in\Ior,\varepsilon\in\ell^1_+}$.
      \end{enumerate}
\end{corollary}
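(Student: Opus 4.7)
The plan is to observe that the corollary is an immediate consequence of the pointwise bounds already established in \autoref{a10}. That corollary states that for every single pair $(I,\varepsilon) \in \Ior \times \ell^1_+$ one has $\add(\Nwf) \leq \add(\Swf_{I,\varepsilon})$ and $\cof(\Swf_{I,\varepsilon}) \leq \cof(\Nwf)$, so essentially no new ingredient is needed, only the step of taking extrema.

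For part (a), I would argue as follows: since $\add(\Nwf)$ is a single cardinal bounded above by $\add(\Swf_{I,\varepsilon})$ for every choice of $(I,\varepsilon)$, it is bounded above by the infimum of the collection $\set{\add(\Swf_{I,\varepsilon})}{(I,\varepsilon)\in\Ior\times\ell^1_+}$, which is attained (as the minimum of a set of cardinals). For part (b), symmetrically, $\cof(\Nwf)$ dominates each $\cof(\Swf_{I,\varepsilon})$, hence dominates their supremum. Thus both inequalities fall out by one-line arguments.

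The only point I would pause on is the handling of the non-$\Swf^\star$-contributive pairs, where \autoref{charofrSIe} gives $\Swf_{I,\varepsilon}=\{\emptyset\}$. By the convention in \autoref{remark_basic}, $\add(\Swf_{I,\varepsilon})=\infty$ in this case, so such pairs cannot lower the minimum in~(a); for~(b), the trivial ideal has cofinality $1$, which is harmlessly absorbed into the supremum and is certainly at most $\cof(\Nwf)$. Hence there is no substantive obstacle: the corollary is just an extremum-taking reformulation of \autoref{a10}, with \autoref{SEcomparison} entering only implicitly via the proof of \autoref{a10}, whose upper bound on $\cof(\Swf_{I,\varepsilon})$ and lower bound on $\add(\Swf_{I,\varepsilon})$ ultimately rest on \autoref{hardtukey}.
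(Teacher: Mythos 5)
Your proposal is correct and matches the paper's route: the paper derives \autoref{addS} as an immediate extremum-taking consequence of \autoref{a10} (phrased as ``Combining~\autoref{SEcomparison} and \autoref{a10}''), and since \autoref{a10} already states the pointwise bounds $\add(\Nwf)\leq\add(\Swf_{I,\varepsilon})$ and $\cof(\Swf_{I,\varepsilon})\leq\cof(\Nwf)$ for every pair $(I,\varepsilon)$, taking the minimum and supremum is all that remains. Your handling of the non-contributive pairs via the conventions of \autoref{remark_basic} is exactly what the paper's proof of \autoref{a10} does before invoking \autoref{hardtukey}.
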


One can ask whether the inequalities in \autoref{addS} could be changed to equalities. The answer is positive after assuming some additional inequalities between cardinal invariants. We start with two auxiliary results. The next lemma describes the null sets.

\begin{lemma}[{see e.g.~\cite[Lem.~2.5.1]{bartjud}}]\label{nullapprox}
    For any null set $G\subseteq {^\omega2}$ there is a sequence $\seq{F_n}{n\in\omega}\in \prod_{n\in\omega}\Pwf({^n2})$ with $\sum_{n\in\omega}\frac{|F_n|}{2^n}<\infty$ such that 
    \[
    G\subseteq \set{x\in {^\omega2}}{\exists^\infty n: x{\restriction} n\in F_n}.
    \]
\end{lemma}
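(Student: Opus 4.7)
The plan is to use the standard characterization of null sets in $\cantor$ via sequences of open covers whose measures shrink rapidly, and then to ``assemble'' these covers into the single sequence $\seq{F_n}{n\in\omega}$. Fix a null set $G\subseteq\cantor$. For each $k\geq 1$, since $G$ has measure zero, one can cover $G$ by a countable family of basic clopen cylinders of total measure less than $2^{-k}$; by splitting cylinders if necessary, we may assume the family is an antichain $T_k\subseteq 2^{<\omega}$, i.e.\ $G\subseteq\bigcup_{s\in T_k}[s]$ with $\sum_{s\in T_k}2^{-|s|}<2^{-k}$. Moreover, by refining each cylinder $[s]$ into its $2^{k}$ extensions of length $|s|+k$ (if $|s|<k$), we may additionally arrange that every $s\in T_k$ satisfies $|s|\geq k$. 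This is a routine but essential adjustment.

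Next I would define, for each $n\in\omega$,
\[
F_n \;=\; \bigcup_{k\geq 1}\set{s\in T_k}{|s|=n}\;\subseteq\;{}^{n}2.
\]
The summability condition is then just a swap of summation:
\[
\sum_{n\in\omega}\frac{|F_n|}{2^n}
\;\leq\;\sum_{k\geq 1}\sum_{s\in T_k}\frac{1}{2^{|s|}}
\;<\;\sum_{k\geq 1}2^{-k}\;=\;1,
\]
so in particular the sum is finite.

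Finally, I would verify the inclusion $G\subseteq\set{x\in\cantor}{\exists^\infty n:x{\upharpoonright}n\in F_n}$. Fix $x\in G$; for each $k\geq 1$, since $T_k$ covers $G$, there exists $s_k\in T_k$ with $x\in[s_k]$, i.e.\ $x{\upharpoonright}|s_k|=s_k\in F_{|s_k|}$. Because we arranged $|s_k|\geq k$, the sequence $\seq{|s_k|}{k\in\omega}$ tends to infinity, so infinitely many distinct integers $n=|s_k|$ satisfy $x{\upharpoonright}n\in F_n$, as required.

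The only nontrivial step is the preparatory one: producing antichains $T_k$ whose minimum length diverges with $k$. This is where one uses that $\cantor$ is compact and zero-dimensional, so that open covers can be refined into antichain clopen covers of arbitrarily large minimum length without increasing the total measure. Once that is in hand, the rest is a swap-of-sums bookkeeping argument together with the pigeonhole observation that the witnesses $s_k$ must lie at ever-longer levels.
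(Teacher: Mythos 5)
Your proof is correct, and it is the standard argument for this classical fact (which the paper cites from Bartoszy\'nski--Judah without reproving): cover $G$ for each $k$ by an antichain of clopen cylinders of total measure below $2^{-k}$, push the cylinders down so that all stems have length at least $k$, merge level by level into the $F_n$, and conclude by a change of the order of summation together with the observation that the witnessing stems must come from unboundedly high levels.
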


In order to be brief, we shall use the following way of coding of small sets:

\begin{definition}[{\cite[Def.~6.5]{CM23}}]\label{def:2small}
We say that $\tbf=(L,\varepsilon,\varphi,\psi)$ is a~\emph{\twosmall} if:
\begin{enumerate}[label =\normalfont (T\arabic*)]
    \item\label{def:2smalla} $ L\in\Ior$, $\varepsilon\in\ell^1_+$, and we denote $L_{2k}=[n_k,m_k)$ and $L_{2k+1}=[m_k,n_{k+1})$ (so $n_k < m_k <n_{k+1}$), and define $I:=\la I_k:\, k<\omega\ra$ and $I':=\la I'_k:\, k<\omega\ra$ by $I_k:=[n_k,n_{k+1})$ and $I'_k:=[m_k,m_{k+1})$.
    \item\label{def:2smallb} $\varphi\in \Sigma_{I,\varepsilon}$, $\psi\in\Sigma_{I',\varepsilon}$.
\end{enumerate}
\end{definition}

The following is a~strengthening of a~similar result in \cite{bartosmall}. Our proof is a~modification of the corresponding result in \cite{bartosmall} and follows the proof of Lemma~6.6 in \cite{CM23}.

\begin{lemma}\label{lem:U2small}
  Let $\set{G_\alpha}{\alpha<\kappa}\subseteq\Nwf$, $\varepsilon\in\ell^1_+$ be decreasing and let $D\subseteq\Ior$ be dominating in $\Ior$. If $\kappa<\bfrak$, then there is some $L\in D$ and some \twosmall s $\tbf^\alpha=(L,\varepsilon,\varphi^\alpha,\psi^\alpha)$ for $\alpha<\kappa$,  such that
      \[G_\alpha\subseteq [\varphi^\alpha]_\infty\cup[\psi^\alpha]_\infty \text{ for every }\alpha<\kappa.\]
\end{lemma}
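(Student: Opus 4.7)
I begin by invoking \autoref{nullapprox} to obtain for each $\alpha<\kappa$ a sequence $\seq{F^\alpha_n}{n<\omega}$ with $F^\alpha_n\subseteq{}^n 2$, $\sum_n\abs{F^\alpha_n}/2^n<\infty$, and $G_\alpha\subseteq\set{x\in\cantor}{\exists^\infty n\in\omega:\,x{\restriction}n\in F^\alpha_n}$. The guiding idea is to route each ``hit'' $n$ (i.e.\ one with $x{\restriction}n\in F^\alpha_n$) into one of the two slaloms of the 2-small coding: hits with $n\in L_{2k+1}$ are recorded in $\varphi^\alpha(k)\subseteq{}^{I_k}2$ and hits with $n\in L_{2k+2}$ in $\psi^\alpha(k)\subseteq{}^{I'_k}2$. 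The two-partition structure is essential because a single $s\in F^\alpha_n$ admits $2^{|I_k|-(n-n_k)}$ extensions to $I_k$, so to keep $\abs{\varphi^\alpha(k)}/2^{|I_k|}$ small we need the ``depth'' $n-n_k$ large; the condition $n\in L_{2k+1}=[m_k,n_{k+1})$ forces $n-n_k\geq m_k-n_k$ inside $I_k$, and symmetrically $n\in L_{2k+2}$ forces sufficient depth inside $I'_k$. The $n$'s that sit in the initial interval $L_0$ are finitely many and therefore irrelevant to the $\exists^\infty$ quantifier.

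For the $\bfrak$ reduction, set $\tau^\alpha(M):=\sum_{n\geq M}\abs{F^\alpha_n}/2^n$ and $g^\alpha(A):=\min\set{M\in\omega}{\tau^\alpha(M)<2^{-A}}$; then $g^\alpha\in\baire$. Since $\kappa<\bfrak$, fix a strictly increasing $g^*\in\baire$ with $g^\alpha\leq^* g^*$ for every $\alpha<\kappa$. Put $\rho(i):=2i+\lceil\log_2(1/\varepsilon_i)\rceil$ (for $i$ large enough that $\varepsilon_i<1$), and build $J^0=\seq{[a_i,a_{i+1})}{i<\omega}\in\Ior$ inductively by $a_{i+1}:=g^*(a_i+\rho(i))$. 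Using that $D$ is dominating, pick $L\in D$ with $J^0\sqsubseteq L$; writing $L_n=[b_n,b_{n+1})$, for every large $n$ some $J^0_{m_n}=[a_{m_n},a_{m_n+1})$ lies inside $L_n$, the indices $m_n$ are strictly increasing in $n$, so $m_n\geq n-c$ for a constant $c$, and
\[b_{n+1}\geq a_{m_n+1}=g^*(a_{m_n}+\rho(m_n))\geq g^*(b_n+\rho(m_n)).\]
Now interpret $L$ as a 2-small coding with $n_k:=b_{2k}$ and $m_k:=b_{2k+1}$, so $I_k=[b_{2k},b_{2k+2})$ and $I'_k=[b_{2k+1},b_{2k+3})$.

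Finally define
\[\varphi^\alpha(k):=\set{t\in{}^{I_k}2}{\exists n\in L_{2k+1}\ \exists s\in F^\alpha_n:\,t{\restriction}[b_{2k},n)=s{\restriction}[b_{2k},n)},\]
and analogously $\psi^\alpha(k)\subseteq{}^{I'_k}2$ using $L_{2k+2}$. A direct extension count gives $\abs{\varphi^\alpha(k)}/2^{|I_k|}\leq 2^{b_{2k}}\tau^\alpha(b_{2k+1})$. Combining $b_{2k+1}\geq g^*(b_{2k}+\rho(m_{2k}))$ with $g^\alpha\leq^* g^*$ yields $\tau^\alpha(b_{2k+1})\leq 2^{-b_{2k}-\rho(m_{2k})}$ for $k$ large, whence $\abs{\varphi^\alpha(k)}/(2^{|I_k|}\varepsilon_k)\leq 2^{-\rho(m_{2k})}/\varepsilon_k\leq\varepsilon_{m_{2k}}/(\varepsilon_k\cdot 4^{m_{2k}})\to 0$ using $m_{2k}\geq 2k-c$ and that $\varepsilon$ is decreasing; the estimate for $\psi^\alpha$ is entirely symmetric. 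For the covering, any $x\in G_\alpha$ has infinitely many hits $n$; since each $L_j$ is finite, pigeonhole gives either infinitely many distinct $k$'s with some hit in $L_{2k+1}$, forcing $x\in[\varphi^\alpha]_\infty$, or the analogous conclusion for $L_{2k+2}$ and $\psi^\alpha$. The main technical hurdle is taming the exponential factor $2^{b_{2k}}$ coming from the many $I_k$-extensions of a short initial segment; this is exactly what the two-partition scheme combined with the $\rho$-margin built into $J^0$ and the $\bfrak$-uniform tail bound $g^*$ are engineered to overcome.
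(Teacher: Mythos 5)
Your proof is correct, and it reaches the conclusion by a genuinely different route than the paper. The paper applies the cardinal $\bfrak$ at the level of interval partitions: for each $\alpha$ it builds per-$\alpha$ endpoint sequences $n^\alpha_k,m^\alpha_k$ whose gaps encode the $\alpha$-specific tail $\sum_{i\geq j}\abs{F^\alpha_i}/2^i$ (with the margin $\varepsilon_k/k$ baked into the definition of $m^\alpha_k$), collects these into partitions $J^\alpha$, and then uses $\kappa<\bfrak$ together with $\Ior\eqT\baire$ to find an $L\in D$ with $J^\alpha\sqsubseteq L$ for \emph{all} $\alpha$ at once; the required decay of $\abs{\varphi^\alpha(k)}/(2^{\abs{I_k}}\varepsilon_k)$ then falls out from the sub-interval $[n^\alpha_j,m^\alpha_j)\subseteq L_{2k}$ with $j\geq k$. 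You instead push the $\bfrak$-reduction onto the canonical form of one $\baire$-function per witness: you encode the tail decay into $g^\alpha$, take a single $\leq^*$-bound $g^*$, build a single auxiliary partition $J^0$ whose gaps carry the pre-compensating margin $\rho(i)=2i+\lceil\log_2(1/\varepsilon_i)\rceil$, and find $L\in D$ above $J^0$ using only that $D$ is dominating (no further use of $\kappa<\bfrak$). The estimate $\abs{\varphi^\alpha(k)}/2^{\abs{I_k}}\leq 2^{b_{2k}}\tau^\alpha(b_{2k+1})$, the inequality $b_{2k+1}\geq g^*(b_{2k}+\rho(m_{2k}))$, the observation that $m_n$ is eventually strictly increasing so $m_{2k}\geq 2k-c$, and the use of the decreasing hypothesis on $\varepsilon$ are all correctly marshalled; the covering argument via pigeonhole on the odd and even blocks $L_{2k+1},L_{2k+2}$ is the same in both proofs. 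The trade-off is that the paper's per-$\alpha$ construction avoids the explicit $\rho$-margin but needs the Tukey equivalence $\Ior\eqT\baire$ to bound the family $\{J^\alpha\}$, while your construction works with a single partition $J^0$ and a more classical $\leq^*$-bound, at the cost of having to engineer $\rho$ so that $2^{-\rho(m_{2k})}/\varepsilon_k\to 0$. Two minor technicalities you could make explicit: $g^*$ should be taken so that $g^*(n)>n$ (to ensure $J^0$ is a genuine partition), and $J^0$ can start past the finite set of $i$ with $\varepsilon_i\geq 1$, which is permitted by the paper's convention that $\Ior$ includes partitions of cofinite subsets.
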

\begin{proof}
   By~\autoref{nullapprox} there are sequences $\seq{F^\alpha_n}{n\in\omega}$ such that $G_\alpha\subseteq \set{x\in{^\omega2}}{\exists^\infty n: x{\restriction} n\in F^\alpha_n}$ for each $\alpha<\kappa$.   Define sequences $n^\alpha_k, m^\alpha_k$ as follows: $n^\alpha_0=0$,
$$m^\alpha_k=\min\largeset{j>n^\alpha_k}{2^{n^\alpha_k}\cdot \sum_{i=j}^\infty\frac{|F^\alpha_i|}{2^i}<\frac{\varepsilon_k}{k}},$$
$$n^\alpha_{k+1}=\min\largeset{j>m^\alpha_k}{2^{m^\alpha_k}\cdot\sum_{i=j}^\infty\frac{|F^\alpha_i|}{2^i}<\frac{\varepsilon_k}{k}},$$
for each $\alpha<\kappa$, $k\in\omega$.

Now we let $J^\alpha=\set{[n^\alpha_{2k}, n^\alpha_{2(k+1)})}{k\in\omega}$ and since $\kappa<\bfrak$ we can find $L\in D$ such that $J^\alpha\sqsubseteq L$ for each $\alpha<\kappa$. Denote $L_{2k}=[n_k,m_k)$ and $L_{2k+1}=[m_k,n_{k+1})$. Now define
$I=\set{[n_k,n_{k+1})}{k\in\omega}$ and $I'=\set{[m_k,m_{k+1})}{k\in\omega}$. By the definition of $\sqsubseteq$-relation, we have that for any $\alpha<\kappa$ there is an $i_\alpha\in\omega$ such that 
$$\forall k>i_\alpha\ \exists j,j'\geq k: [n^\alpha_j,m^\alpha_j)\subseteq L_{2k}\text{ and }[n^\alpha_{j'}, m^\alpha_{j'+1})\subseteq L_{2k+1}.$$
Note that we may assume $j,j'\geq k$, since by the definition of $J^\alpha$, there is a $k_\alpha$ such that $\forall k>k_\alpha$ $[n^\alpha_{2l}, n^\alpha_{2l+2})\subseteq L_k$ for some $l$, hence, $[n^\alpha_{2l}, m^\alpha_{2l}),[n^\alpha_{2l+1}, m^\alpha_{2l+1})\subseteq L_k$. Then the smallest (w.r.t. indices) possible subinterval  for $L_{k+1}$ is $[n^\alpha_{2l+2}, m^\alpha_{2l+2})$, for $L_{k+2}$ it is $[n^\alpha_{2l+4}, m^\alpha_{2l+4})$ etc. Clearly, indices of subintervals grow at least two times faster than indices of $L_k$'s. So, eventually $j$'s must catch $k$'s up.

Now, for any $\alpha<\kappa$ define sequences $\varphi^\alpha,\psi^\alpha$ as follows: $\varphi^\alpha(k)=\psi^\alpha(k)=\emptyset$ for $k\leq i_\alpha$, otherwise 
\begin{align*}
\varphi^\alpha(k)&=\set{s\in {^{I_k}}2}{\exists i\in [m_k,n_{k+1})\ \exists t\in F^\alpha_i:s{\restriction} (\dom(t)\cap I_k)=t{\restriction}(\dom(t)\cap I_k)},\\
\psi^\alpha(k)&=\set{s\in {^{I'_k}}2}{\exists i\in [n_{k+1},m_{k+1})\ \exists t\in F^\alpha_i:s{\restriction} (\dom(t)\cap I'_k)=t{\restriction}(\dom(t)\cap I'_k)}.
\end{align*}

For $k\leq i_\alpha$ we have $\frac{\abs{\varphi^\alpha(k)}}{2^{\abs{I_k}}}=0$. For $k>i_\alpha$
$$ [n^\alpha_j,m^\alpha_j)\subseteq [n_k, m_k)=L_{2k} \text{ and }  [m^\alpha_{j'}, n^\alpha_{j'+1})\subseteq [m_k, n_{k+1})=L_{2k+1}$$
for some $j,j'\geq k$. That is, $n_k\leq n^\alpha_j<m^\alpha_j\leq m_k$ and $m_k\leq m^\alpha_{j'}< n^\alpha_{j'+1}\leq n_{k+1}$. Therefore, since for any $k$ we have $$\abs{\varphi^\alpha(k)}\leq \sum_{i=m_k}^{n_{k+1}}\abs{F^\alpha_i}\cdot 2^{\abs{I_k}-(i-n_k)}$$ (for any $i\in[m_k,n_{k+1})$ there are $2^{\abs{I_k}-(i-n_k)}$ extensions of $F^\alpha_i$ inside $[n_k,n_{k+1})$), one can easily see that for all but finitely many $k\in\omega$ we have
$$\frac{|\varphi^\alpha(k)|}{2^{|I_k|}}\leq2^{n_k}\cdot \sum_{i=m_k}^{n_{k+1}}\frac{|F^\alpha_i|}{2^i}\leq 2^{n^\alpha_j}\cdot \sum_{i=m^\alpha_j}^{n_{k+1}} \frac{\abs{F^\alpha_i}}{2^i} <\frac{\varepsilon_j}{j}\leq\frac{\varepsilon_k}{j}\leq\frac{\varepsilon_k}{k}.$$
Hence, $\frac{|\varphi^\alpha(k)|}{2^{|I_k|}\cdot \varepsilon_k}\leq\frac{1}{k}\to 0$. Therefore $\varphi^\alpha\in \Sigma_{I,\varepsilon}$. In a similar way we can prove $\psi^\alpha\in\Sigma_{I',\varepsilon}$. 

It remains to show $G_\alpha\subseteq [\varphi^\alpha]_\infty\cup[\psi^\alpha]_\infty$. Let $x\in G_\alpha$ and $X=\set{n\in\omega}{x{\restriction} n\in F^\alpha_n}$. By our assumption about $G_\alpha$ we have $|X|=\omega$. Then one of the sets $X\cap \bigcup_{k\in\omega}[m_k,n_{k+1})$ or $X\cap\bigcup_{k\in\omega}[n_{k+1},m_{k+1})$ is infinite. WLOG assume the first case, i.e., for infinitely many $n$'s we have $n\in [m_k,n_{k+1})$ for some $k$ and $x{\restriction} n\in F^\alpha_n$. For those $k$'s, by the definition of $\varphi^\alpha(k)$ there is $s\in \varphi^\alpha(k)$ such that $s=x{\restriction}[n_k,n_{k+1})=x{\restriction} I_k$. So, we have $x{\restriction} I_k\in \varphi^\alpha(k)$. 
\end{proof}

We show that we can prove the reversed inequalities to those in \autoref{addS} under additional assumptions.
\begin{theorem}\label{Thm:cofadd}\
\begin{enumerate}[label=\rm(\alph*)]

   \item\label{ThmM:ac:aa} Assume that $\add(\Nwf)<\bfrak$. Then $\add(\Nwf)=\min\set{\add(\Swf_{I,\varepsilon})}{I\in\Ior\text{\ and\ }\varepsilon\in\ell^1_+}$.

    \item\label{ThmM:ac:ab}  Assume that $\cof(\Nwf)>\dfrak$. Then $\cof(\Nwf)=\sup\set{\cof(\Swf_{I, \varepsilon})}{I\in\Ior\text{\ and\ }\varepsilon\in\ell^1_+}$.
\end{enumerate}    
\end{theorem}
\begin{proof}
\ref{ThmM:ac:aa}: The inequality ``$\leq$" is clear from~\autoref{addS}. So we shall prove $\add(\Nwf)\geq\min\set{\add(\Swf_{I,\varepsilon})}{I\in\Ior,\varepsilon\in\ell^1_+}$. To see this, 
let $\Awf=\set{A_\alpha}{\alpha<\kappa}\subseteq \Nwf$ be an witness for $\add(\Nwf)$, i.e., $\abs{\Awf}=\add(\Nwf)$ and $\bigcup_{\alpha<\kappa} A_\alpha\notin\Nwf$. By \autoref{lem:U2small} there are $\varepsilon\in\ell^1_+$, $I,I'\in\Ior$ and also $\varphi^\alpha\in\Sigma_{I,\varepsilon}$, $\psi^\alpha\in\Sigma_{I',\varepsilon}$, such that $A_\alpha\subseteq [\varphi^\alpha]_\infty\cup [\psi^\alpha]_\infty$ for each $\alpha<\kappa$. Clearly, one of the sets $\bigcup_{\alpha<\kappa} [\varphi^\alpha]_\infty$ or $\bigcup_{\alpha<\kappa} [\psi^\alpha]_\infty$ is not null, otherwise $\bigcup_{\alpha<\kappa} [\varphi^\alpha]_\infty\cup[\psi^\alpha]_\infty \supseteq\bigcup_{\alpha<\kappa} A_\alpha$ would be null, which is not true by the assumption. Recall that $\Swf_{I,\varepsilon}\subseteq \Nwf$ for any $I\in\Ior$, $\varepsilon\in\ell^1_+$. Consequently, $\bigcup_{\alpha<\kappa} [\varphi^\alpha]_\infty\notin\Swf_{I,\varepsilon}$ or $\bigcup_{\alpha<\kappa} [\psi^\alpha]_\infty\notin\Swf_{I',\varepsilon}$. WLOG assume the first case. Then $\set{[\varphi^\alpha]_\infty}{\alpha<\kappa}$ is a family of sets from $\Swf_{I,\varepsilon}$ such that $\bigcup\set{[\varphi^\alpha]_\infty}{\alpha<\kappa}\notin\Swf_{I,\varepsilon}$ and has cardinality $\leq \add(\Nwf)$. We get
$\add(\Swf_{I,\varepsilon})\leq \abs{\set{[\varphi^\alpha]_\infty}{\alpha<\kappa}}\leq\add(\Nwf).$
\par
\ref{ThmM:ac:ab}: Since $\sup\set{\cof(\Swf_{I,\varepsilon})}{I\in\Ior,\varepsilon\in\ell^1_+}\leq\cof(\Nwf)$ we need to prove only the other inequality. Pick a dominating family $D$ in $\Ior$ such that $\abs{L_k}\geq  k+1$, $k\in\omega$, for all $L\in D$ and such that $\abs{D}=\dfrak$. For each $L\in D$ define partitions $I_L$ and $I_L'$ as in \autoref{def:2small}~\ref{def:2smalla}. Note that $\abs{I_{L,k}}\geq 2(k+1)$ and $\abs{I_{L,k}'}\geq 2(k+1)$ for each $k$. For each $L\in D$ denote by $\Wwf_L$, $\Wwf_L'$ witnesses for $\cof(\Swf_{I_L,2^{-k-1}})$ and $\cof(\Swf_{I_L',2^{-k-1}})$, respectively. Notice that $(I_L,2^{-k-1})$ and $(I_L',2^{-k-1})$ are $\Ewf$-contributive by \autoref{charofrEIe}~\ref{charofrEIe1lim} because $\lim_{k\to\infty}2^{\abs{I_{L,k}}}\cdot 2^{-k-1}\geq   2^{2k+2}\cdot 2^{-k-1}=2^{k+1}\to\infty$. Similarly $\lim_{k\to\infty}2^{\abs{I'_{L,k}}}\cdot \varepsilon_k\to\infty$.  Define 
$$\Awf=\bigcup_{L\in D}\set{A_0\cup A_1}{A_0\in \Wwf_L, A_1\in\Wwf_L'}.$$

Let $\Bwf\subseteq \Nwf$ be a base of $\Nwf$. By \autoref{lem:U2small} for each $B\in\Bwf$ there is $L\in D$ such that $B=B_0\cup B_1$ where $B_0\in \Swf_{I_L,2^{-k-1}}$ and $B_1\in \Swf_{I_L',2^{-k-1}}$ and consequently (since $\Wwf_L$ and $\Wwf_L'$ are bases) there are $A_0^B\in\Wwf_L$ and $A_1^B\in\Wwf_L'$ such that  $B\subseteq A_0^B\cup A_1^B\in\Awf$. Thus, $\Awf$ is a base of $\Nwf$. Note that
$$\abs{\Awf}\leq \abs{D}\cdot\sup\set{\abs{\Wwf_L}\cdot\abs{\Wwf_L'}}{L\in D}\leq \dfrak\cdot \sup\set{\cof(\Swf_{J,\varepsilon})}{J\in\Ior,\varepsilon\in \ell^1_+}.$$
Since $\dfrak<\cof(\Nwf)\leq\abs{\Awf}$, we have that
$$\abs{\Awf}\leq \max\{\dfrak, \sup\set{\cof(\Swf_{J,\varepsilon})}{J\in\Ior,\varepsilon\in \ell^1_+}\}=\sup\set{\cof(\Swf_{J,\varepsilon})}{J\in\Ior,\varepsilon\in \ell^1_+}.$$
Consequently, $\cof(\Nwf)\leq\abs{\Awf}\leq\sup\set{\cof(\Swf_{J,\varepsilon})}{J\in\Ior,\varepsilon\in \ell^1_+}$. 
\end{proof}

\section{Covering and uniformity of the new ideals}

This section is mainly devoted to proving~\autoref{Thm:a1} and ~\autoref{Thm:m2}. We begin with the following observation concerning localization and anti-localization cardinals.

\begin{remark}
Localization and anti-localization cardinals defined in \autoref{defloc} depend on the parameter $b = \seq{b(n)}{n\in\omega}$. The following lemma shows that only the cardinalities of $b(n)$'s are important for investigating localization and anti-localization cardinals. Therefore, we  can formulate all our results assuming $b\in{^\omega\omega}$ (identifying each natural number $b(n)$ with $\{0,1,\dotso,b(n)-1\}$). However, in the proofs we usually work with a suitable sequence of sets instead of sequence of natural numbers. The lemma is due to \cite{CM}.    
\end{remark}

\begin{lemma}\label{cardinaltyDep}
Let $b,b'$ be sequences of length $\omega$ of non-empty sets and let $h,h'\in {^\omega\omega}$. If for all but finitely many $n\in\omega$, $\abs{b(n)}\leq \abs{b'(n)}$, and $h'\leq^* h$, then $\Lc(b,h)\leqT \Lc(b',h')$ and $\aLc(b',h')\leqT \aLc(b,h)$. In particular
\begin{enumerate}[label=\normalfont(\alph*)]
    \item $\blc_{b',h'}\leq\blc_{b,h}$ and $\dlc_{b,h}\leq\dlc_{b',h'}$,
    \item $\balc_{b,h}\leq\balc_{b',h'}$ and $\dalc_{b',h'}\leq\dalc_{b,h}$.
\end{enumerate}
\end{lemma}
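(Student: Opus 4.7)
\textbf{Proof plan for \autoref{cardinaltyDep}.}
The plan is to construct explicit Tukey morphisms in both directions using coordinate-wise injections provided by the cardinality assumption, and then verify the defining implication of a Tukey connection.

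First, fix $N\in\omega$ large enough that $\abs{b(n)}\leq\abs{b'(n)}$ and $h'(n)\leq h(n)$ for all $n\geq N$. For each $n\geq N$, choose an injection $\iota_n\colon b(n)\to b'(n)$, and for $n<N$ choose any function $\iota_n\colon b(n)\to b'(n)$ (the finite prefix will not affect the eventual-quantifiers). Pick also an arbitrary element $*_n\in b(n)$ for each $n$, used as padding.

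\emph{Step 1 (the Tukey morphism $\Lc(b,h)\leqT\Lc(b',h')$).}
Define $\Psi_-\colon\prod b\to\prod b'$ by $\Psi_-(x)(n):=\iota_n(x(n))$. Define $\Psi_+\colon\Swf(b',h')\to\Swf(b,h)$ by
\[
\Psi_+(\varphi')(n):=\begin{cases}\iota_n^{-1}[\varphi'(n)] & \text{if } n\geq N,\\ \emptyset & \text{if } n<N.\end{cases}
\]
Injectivity of $\iota_n$ for $n\geq N$ gives $\abs{\Psi_+(\varphi')(n)}\leq\abs{\varphi'(n)}\leq h'(n)\leq h(n)$, so $\Psi_+(\varphi')\in\Swf(b,h)$. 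If $\Psi_-(x)\in^*\varphi'$, then for all but finitely many $n\geq N$ we have $\iota_n(x(n))\in\varphi'(n)$, i.e., $x(n)\in\iota_n^{-1}[\varphi'(n)]=\Psi_+(\varphi')(n)$, so $x\in^*\Psi_+(\varphi')$. This yields the first Tukey connection.

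\emph{Step 2 (the Tukey morphism $\aLc(b',h')\leqT\aLc(b,h)$).}
Now the direction of the functions is reversed. Define $\Psi_-\colon\Swf(b',h')\to\Swf(b,h)$ by the same formula $\Psi_-(\varphi')(n):=\iota_n^{-1}[\varphi'(n)]$ for $n\geq N$ and $\emptyset$ otherwise (same size argument as above), and define $\Psi_+\colon\prod b\to\prod b'$ by $\Psi_+(x)(n):=\iota_n(x(n))$. Suppose $\Psi_-(\varphi')\not\ni^\infty x$, i.e., $\forall^\infty n\colon x(n)\notin\iota_n^{-1}[\varphi'(n)]$. For $n\geq N$, injectivity of $\iota_n$ gives $x(n)\in\iota_n^{-1}[\varphi'(n)]$ iff $\iota_n(x(n))\in\varphi'(n)$, hence $\forall^\infty n\colon\iota_n(x(n))\notin\varphi'(n)$, that is, $\varphi'\not\ni^\infty\Psi_+(x)$. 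This is the required implication.

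\emph{Step 3 (cardinal consequences).} The ``In particular'' clause is immediate from \autoref{cor:Tukeyval}\ref{Tukeyval:b} applied to each of the two Tukey reductions obtained in Steps 1 and 2, using the identification $\bfrak(\Lc(b,h))=\blc_{b,h}$, $\dfrak(\Lc(b,h))=\dlc_{b,h}$ and similarly for $\aLc$.

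There is no real obstacle here; the only subtlety is keeping track of the two different ``eventually'' conditions (coming from $\abs{b(n)}\leq\abs{b'(n)}$ and from $h'\leq^* h$) so that $\Psi_+(\varphi')$ is a genuine element of $\Swf(b,h)$ and not just of $\prod_{n\in\omega}[b(n)]^{\leq h'(n)}$ on a cofinite set; this is handled uniformly by fixing a single threshold $N$ at the start.
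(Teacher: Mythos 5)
Your proof is correct. The paper itself does not supply a proof of this lemma---it is stated with the attribution ``The lemma is given by \cite{CM}''---so there is no in-text argument to compare against, but the coordinate-wise injection construction you give is exactly the standard argument, and all the details check out: $\Psi_+(\varphi')$ lands in $\Swf(b,h)$ because injectivity of $\iota_n$ (for $n\geq N$) keeps $|\iota_n^{-1}[\varphi'(n)]|\leq|\varphi'(n)|\leq h'(n)\leq h(n)$, and the defining implication of each Tukey connection reduces to the tautology $x(n)\in\iota_n^{-1}[\varphi'(n)]\Leftrightarrow\iota_n(x(n))\in\varphi'(n)$. Two cosmetic remarks: the padding elements $*_n$ you fix in the preamble are never used, and the equivalence in Step~2 that you attribute to injectivity of $\iota_n$ is simply the definition of preimage and holds for any function (injectivity is needed only for the cardinality bound on $\Psi_-(\varphi')$, as in Step~1).
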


The inequalities of~\autoref{Thm:a1} shall be inferred from the following result.

\begin{lemma}\label{lem:alcI}
\
\begin{enumerate}[label=\rm(\alph*)]
    \item\label{alcI-a}  Let $I\in\Ior$ and $\varepsilon\in\ell^1_+$. Then there are $h, b\in\baire$ such that $\sum_{n\in\omega}\frac{h(n)}{b(n)}<\infty$ and $\aLc(b,h)^{\perp}\leqT \Cbf_{\Swf_{I,\varepsilon}}.$ In particular, $\balc_{b,h}\leq \cov(\Swf_{I,\varepsilon})$ and $\non(\Swf_{I,\varepsilon})\leq\dalc_{b,h}$.

    \item\label{alcI-b} Let $h, b\in\baire$ be such that $\sum_{n\in\omega}\frac{h(n)}{b(n)}<\infty$. Then there are $I\in\Ior$ and $\varepsilon\in\ell^1_+$ such that $\cov(\Swf_{I,\varepsilon})\leq\balc_{b,h}$ and $\dalc_{b,h}\leq\non(\Swf_{I,\varepsilon})$.  
\end{enumerate}
\end{lemma}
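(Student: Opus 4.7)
The plan is to establish both parts by exhibiting explicit Tukey connections and invoking \autoref{cor:Tukeyval}. For (a) the target is $\aLc(b,h)^{\perp}\leqT\Cbf_{\Swf_{I,\varepsilon}}$, which yields $\balc_{b,h}\leq\cov(\Swf_{I,\varepsilon})$ and $\non(\Swf_{I,\varepsilon})\leq\dalc_{b,h}$; for (b) the reverse connection $\Cbf_{\Swf_{I,\varepsilon}}\leqT\aLc(b,h)^{\perp}$ produces the inequalities in the statement. In both cases the underlying idea is to identify $\prod b$ with a piece of $\cantor$ coming from the partition $I$, and to translate between slaloms in $\Swf(b,h)$ and sequences $\varphi\in\Sigma_{I,\varepsilon}$ through this identification.

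For (a), after disposing of the trivial case $\Swf_{I,\varepsilon}=\{\emptyset\}$, I assume $(I,\varepsilon)$ is $\Swf^\star$-contributive. Let $A=\set{n_k}{k\in\omega}$ enumerate $\set{n\in\omega}{\varepsilon_n 2^{|I_n|}\geq 1}$, which is infinite by \autoref{charofrSIe}. The defining limit of $\Sigma_{I,\varepsilon}$ together with $\varepsilon_n 2^{|I_n|}<1$ on $\omega\smallsetminus A$ forces any $\varphi\in\Sigma_{I,\varepsilon}$ to satisfy $\varphi(n)=\emptyset$ for all but finitely many $n\notin A$. Set $b(k):=2^{|I_{n_k}|}$ and $h(k):=\lceil 2^{|I_{n_k}|}\varepsilon_{n_k}\rceil$, so that $h(k)/b(k)\leq 2\varepsilon_{n_k}$ and hence $\sum_{k\in\omega} h(k)/b(k)<\infty$. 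Fix bijections $\tau_k\colon {}^{I_{n_k}}2\to b(k)$, and define $\Psi_-\colon\prod b\to\cantor$ by $\Psi_-(x)\upharpoonright I_{n_k}:=\tau_k^{-1}(x(k))$ and $\Psi_-(x)\upharpoonright I_n:=\boldsymbol{0}$ for $n\notin A$. For $X\in\Swf_{I,\varepsilon}$, pick $\varphi_X\in\Sigma_{I,\varepsilon}$ with $X\subseteq[\varphi_X]_\infty$, adjust it on finitely many coordinates so that $\varphi_X(n)=\emptyset$ for $n\notin A$ and $|\varphi_X(n_k)|\leq h(k)$ for all $k$ (both hold eventually and such modifications preserve $[\varphi_X]_\infty$), and put $\Psi_+(X)(k):=\tau_k[\varphi_X(n_k)]$. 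The Tukey condition then reduces to a short check: $\Psi_-(x)\in X\subseteq[\varphi_X]_\infty$ yields infinitely many $n_k$ with $\Psi_-(x)\upharpoonright I_{n_k}\in\varphi_X(n_k)$, which translates via $\tau_k$ to $x(k)\in\Psi_+(X)(k)$, so $x\in^\infty\Psi_+(X)$.

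For (b), given $b,h\in\baire$ with $\sum h(n)/b(n)<\infty$, I choose $|I_n|:=\lceil\log_2 b(n)\rceil+1$ so that $2^{|I_n|}\geq 2b(n)$, and fix surjections $\pi_n\colon {}^{I_n}2\to b(n)$ whose fibres have size at most $\lceil 2^{|I_n|}/b(n)\rceil$. Applying \autoref{a9} to $\varepsilon'_n:=h(n)/b(n)+2^{-n}$ produces a non-decreasing $\delta_n\to\infty$ with $\sum_n\delta_n\varepsilon'_n<\infty$; set $\varepsilon_n:=\delta_n\varepsilon'_n\in\ell^1_+$, so that $h(n)/(b(n)\varepsilon_n)\leq 1/\delta_n\to 0$. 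Define $\Psi_-\colon\cantor\to\prod b$ by $\Psi_-(x)(n):=\pi_n(x\upharpoonright I_n)$, and for $S\in\Swf(b,h)$ put $\varphi_S(n):=\pi_n^{-1}[S(n)]$ and $\Psi_+(S):=[\varphi_S]_\infty$. The bound $|\varphi_S(n)|\leq h(n)\lceil 2^{|I_n|}/b(n)\rceil$ combined with $h(n)/(b(n)\varepsilon_n)\to 0$ shows $\varphi_S\in\Sigma_{I,\varepsilon}$, whence $\Psi_+(S)\in\Swf_{I,\varepsilon}$. The implication $\Psi_-(x)\in^\infty S\Rightarrow x\in\Psi_+(S)$ is immediate from the definitions, since $\Psi_-(x)(n)\in S(n)$ holds iff $x\upharpoonright I_n\in\pi_n^{-1}[S(n)]=\varphi_S(n)$.

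The main obstacle is a mild asymmetry between the two directions. In (a) one must discard the coordinates $n\notin A$ where $2^{|I_n|}\varepsilon_n<1$, since they would spoil $\sum_{k} h(k)/b(k)<\infty$ yet contribute nothing to $\Swf_{I,\varepsilon}$; this is handled via \autoref{charofrSIe} and the observation that $\Sigma_{I,\varepsilon}$ is supported on $A$ modulo finitely many indices. In (b) the dual difficulty is that $h(n)$ alone cannot control the pre-image sizes $|\pi_n^{-1}(S(n))|\leq h(n)\cdot\lceil 2^{|I_n|}/b(n)\rceil$, so $\varepsilon_n$ must be inflated relative to $h(n)/b(n)$ while remaining summable, which is precisely what \autoref{a9} delivers.
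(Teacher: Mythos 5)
Your proof is correct and runs along essentially the same lines as the paper's. In part \ref{alcI-a} the paper simply sets $b(n)={}^{I_n}2$ and $h(n)=\lfloor 2^{|I_n|}\varepsilon_n\rfloor$ for all $n$ (the floor keeps $h(n)/|b(n)|\leq\varepsilon_n$ summable without any reindexing), whereas your use of $\lceil\cdot\rceil$ forces you to pass to the subset $A$ first---both work, the paper's variant is just a bit more economical. In part \ref{alcI-b} the paper takes $|I_n|=\lfloor\log_2 b(n)\rfloor$ so that $2^{|I_n|}\leq b(n)$, builds the Tukey connection for $b'(n)={}^{I_n}2$, and then upgrades to $b$ via the monotonicity of anti-localization cardinals (\autoref{cardinaltyDep}); you instead take $2^{|I_n|}\geq 2b(n)$ and inline that comparison by hand through surjections with controlled fibre sizes. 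The key lemma (\autoref{a9}, used to turn $h(n)/b(n)$ into a summable $\varepsilon$ slow enough for $\Sigma_{I,\varepsilon}$), the definitions of $\Psi_\pm$, and the Tukey-connection verifications all match the paper's argument.
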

\begin{proof}
\ref{alcI-a}: If $I,\varepsilon$ are not $\Swf^\star$-contributive then pick arbitrary $h,b$ such that the corresponding relational system $\aLc(b,h)$ is defined and the proof is done. Otherwise, for $n\in\omega$ define $b(n)={^{I_n}2}$ and $h(n)=\lfloor|b(n)|\cdot\varepsilon_n\rfloor$. Notice that $\sum_{n\in\omega}\frac{h(n)}{\abs{b(n)}}<\infty$ because $\varepsilon\in\ell^1_+$. So it remains to prove that $\aLc(b,h)^{\perp}\leqT \Cbf_{\Swf_{I,\varepsilon}}.$

Let $X\in\Swf_{I, \varepsilon}$. Choose $\varphi_X\in\Sigma_{I,\varepsilon}$ such that $X\subseteq[\varphi_X]_\infty$. Since $\lim_{n\to\infty}\frac{\abs{\varphi_X(n)}}{2^{\abs{I_n}}\cdot \varepsilon_n}=0$, we can find $n_0$ such that $\abs{\varphi_X(n)}<2^{\abs{I_n}}\cdot \varepsilon_n$ for all $n>n_0$. Consequently, since $\abs{\varphi(n)}\in\omega$ for all $n$, we have that $\forall n>n_0:\abs{\varphi_X(n)}\leq \lfloor 2^{\abs{I_n}}\cdot \varepsilon_n \rfloor$. Next, for $n\in\omega$ define $\varphi_X^*$ by 
\[\varphi_X^*(n)=\begin{cases}\emptyset,& n\leq n_0,\\
\varphi_X(n),& n > n_0.
\end{cases}\]
It is clear that $\varphi_X^*\in \Swf(b,h)$ and $X\subseteq [\varphi_X^*]_\infty$, so put $\Psi_+(X)=\varphi_X^*$. On the other hand, define $\Psi_-\colon \prod b\to {^\omega2}$ by $\Psi_-(s)=s_0^\frown s_1^\frown s_2^\frown\dotso$. It is clear that $\Psi_-(s)\in X$ implies $s\in^\infty \Psi_+(X)$, which guarantees that $(\Psi_-,\Psi_+)$ is the desired Tukey connection.
\par
\ref{alcI-b}: We will show that there are $(I,\varepsilon)\in\Ior\times\ell^1_+$ and a sequence $b'$ with $\abs{b'(n)}\leq b(n)$ for all $n\in\omega$, such that $\cov(\Swf_{I,\varepsilon})\leq \balc_{b',h}$ and $\dalc_{b',h}\leq\non(\Swf_{I,\varepsilon})$. Consequently, by \autoref{cardinaltyDep} 
\[\textrm{$\cov(\Swf_{I,\varepsilon})\leq\balc_{b',h}\leq \balc_{b,h}$ and 
$\dalc_{b,h}\leq \dalc_{b',h}\leq\non(\Swf_{I,\varepsilon})$}.
\]

    Let $\seq{I_n}{n\in\omega}$ be an interval partition of $\omega$ such that $|I_n|=\lfloor\log_2 b(n)\rfloor$. For $n\in\omega$ define $b'(n)={^{I_n}2}$. Note that $$\abs{b'(n)}=2^{\lfloor\log_2 b(n)\rfloor}\leq 2^{\log_2 b(n)}= b(n)<2^{\abs{I_n}+1}=2\cdot \abs{b'(n)}$$ and thus, also $\frac{b(n)}{2}<\abs{b'(n)}$ for all $n\in\omega$. Therefore, $\sum_{n\in\omega}\frac{h(n)}{\abs{b'(n)}}\leq\sum_{n\in\omega}\frac{2h(n)}{b(n)}<\infty$.

Now, we will find $\varepsilon$ such that $\sum_{n\in\omega}\varepsilon_n<\infty$ and $\lim_{n\to\infty}\frac{h(n)}{|b'(n)|\cdot\varepsilon_n}=0$. 
By \autoref{a9} there is $\delta\to\infty$ such that $\sum_{n\in\omega}\frac{h(n)}{|b'(n)|}\delta_n<\infty$. Put $\varepsilon_n:=\frac{h(n)}{|b'(n)|}\delta_n$ for every $n\in\omega$.

Now, we will show $ \Cbf_{\Swf_{I,\varepsilon}}\leqT\aLc(b',h)^{\perp}$, i.e., $\la{^\omega2},\Swf_{I,\varepsilon},\in\ra\leqT \la\prod b',\Scal(b',h),\in^\infty\ra$. We need to define $\Psi_-:\cantor\to\prod b'$ and $\Psi_+:\Swf(b',h)\to\Swf_{I,\varepsilon}$.

 For $x\in\cantor$, define $\Psi_-(x)(n):=x{\restriction} I_n$. Note that $\Scal(b',h)\subseteq \Sigma_{I,\varepsilon}$. Indeed, if $\varphi\in\Scal(b',h)$ then $\varphi(n)\subseteq {^{I_n}2}$ and $\frac{\abs{\varphi(n)}}{2^{\abs{I_n}}\cdot\varepsilon_n}=\frac{\abs{\varphi(n)}}{\abs{b'(n)}\cdot\varepsilon_n}\leq \frac{h(n)}{\abs{b'(n)}\cdot\varepsilon_n}=\frac{1}{\delta_n}\to 0$. Thus, we define $\Psi_+$ naturally by $\Psi_+(\varphi)=[\varphi]_\infty$ for any $\varphi\in\Scal(b',h)$.

Let $x\in{^\omega2}$ and $\varphi\in\Scal(b',h)$ be such that $\Psi_-(x)\in^\infty \varphi$. Then, clearly, $x\in \Psi_+(\varphi)=[\varphi]_\infty$.
\end{proof}

We are ready to show~\autoref{Thm:a1}:

\begin{theorem}\label{thm:alocsmall}
\
\begin{enumerate}[label=\rm(\alph*)]
\item\label{alcChA} $\min\set{\cov(\Swf_{I, \varepsilon})}{I\in\Ior\text{\ and\ }\varepsilon\in\ell^1_+}=\min\set{\balc_{b,h}}{b,h\in{^\omega\omega}\text{\ and\ } \sum_{n\in\omega}\frac{h(n)}{b(n)}<\infty}$.
\item\label{alcChB} $\sup\set{\non(\Swf_{I, \varepsilon})}{I\in\Ior\text{\ and\ }\varepsilon\in\ell^1_+}=\sup\set{\dalc_{b,h}}{b,h\in{^\omega\omega}\text{\ and\ } \sum_{n\in\omega}\frac{h(n)}{b(n)}<\infty}$.
\end{enumerate}    
\end{theorem}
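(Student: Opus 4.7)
The plan is to observe that Theorem~\ref{thm:alocsmall} is essentially a direct bookkeeping consequence of Lemma~\ref{lem:alcI}, which has already provided the two Tukey-style constructions that convert back and forth between pairs $(I,\varepsilon)\in\Ior\times\ell^1_+$ and pairs $(b,h)\in\baire\times\baire$ satisfying $\sum_n h(n)/b(n)<\infty$.

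First I would prove \ref{alcChA}. For the ``$\leq$'' direction, fix an arbitrary $\Swf^\star$-contributive pair $(I,\varepsilon)$ (non-contributive pairs yield $\cov(\Swf_{I,\varepsilon})=\infty$, so they never realize the minimum). By Lemma~\ref{lem:alcI}\ref{alcI-a} there exist $b,h\in\baire$ with $\sum_n h(n)/b(n)<\infty$ such that $\balc_{b,h}\leq\cov(\Swf_{I,\varepsilon})$. Taking the infimum over all such $(I,\varepsilon)$ yields
\[\min\set{\balc_{b,h}}{b,h\in\baire,\ \textstyle\sum_n\frac{h(n)}{b(n)}<\infty}\leq\min\set{\cov(\Swf_{I,\varepsilon})}{I\in\Ior,\ \varepsilon\in\ell^1_+}.\]
For the reverse ``$\geq$'' direction, fix an arbitrary pair $(b,h)$ with $\sum_n h(n)/b(n)<\infty$. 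By Lemma~\ref{lem:alcI}\ref{alcI-b} there exist $I\in\Ior$ and $\varepsilon\in\ell^1_+$ with $\cov(\Swf_{I,\varepsilon})\leq\balc_{b,h}$. Taking infima produces the opposite inequality, proving \ref{alcChA}.

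The proof of \ref{alcChB} is completely parallel, using the ``non/dalc'' halves of the same two Tukey connections from Lemma~\ref{lem:alcI}, and taking suprema instead of infima. Specifically, Lemma~\ref{lem:alcI}\ref{alcI-a} gives $\non(\Swf_{I,\varepsilon})\leq\dalc_{b,h}$ for suitable $(b,h)$, yielding
\[\sup\set{\non(\Swf_{I,\varepsilon})}{I\in\Ior,\ \varepsilon\in\ell^1_+}\leq\sup\set{\dalc_{b,h}}{b,h\in\baire,\ \textstyle\sum_n\frac{h(n)}{b(n)}<\infty},\]
and Lemma~\ref{lem:alcI}\ref{alcI-b} gives $\dalc_{b,h}\leq\non(\Swf_{I,\varepsilon})$ for suitable $(I,\varepsilon)$, yielding the reverse supremum inequality.

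There is no genuine obstacle: all the real work (the two Tukey reductions, the choice of $b(n)={}^{I_n}2$ and $h(n)=\lfloor\abs{b(n)}\cdot\varepsilon_n\rfloor$ in one direction, and the choice of intervals of length $\lfloor\log_2 b(n)\rfloor$ together with the rescaling by Lemma~\ref{a9} in the other) was already carried out in Lemma~\ref{lem:alcI}. The only small point worth remarking is the convention on trivial pairs: if $(I,\varepsilon)$ fails to be $\Swf^\star$-contributive then $\cov(\Swf_{I,\varepsilon})=\infty$ and $\non(\Swf_{I,\varepsilon})$ is undefined by convention, so such pairs can be safely omitted from the infimum and supremum on the left-hand side without affecting the values.
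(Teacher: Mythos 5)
Your proof is correct and takes essentially the same route as the paper: Theorem~\ref{thm:alocsmall} is read off directly from the two halves of Lemma~\ref{lem:alcI} by taking infima and suprema over the appropriate parameter families. (One minor slip: for a non-$\Swf^\star$-contributive pair, $\non(\Swf_{I,\varepsilon})$ is not ``undefined'' but equals $1$; this still does not affect the supremum, so your conclusion stands.)
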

\begin{proof}
By \autoref{lem:alcI}\ref{alcI-a} we have the inequalities ``$\geq$'' in \ref{alcChA} and ``$\leq$'' in \ref{alcChB}. On the other hand, by applying~\autoref{lem:alcI}\ref{alcI-b}, we have ``$\leq$'' in \ref{alcChA} and ``$\geq$'' in \ref{alcChB}.
\end{proof}

As a direct consequence of the theorem above, we provide another characterization of $\cov(\Nwf)$ and $\non(\Nwf)$ using the  $\sigma$-ideals $\Swf_{I,\varepsilon}$ as follows:

\begin{corollary}\label{cov_non}\
\begin{enumerate}[label=\rm(\alph*)]
    \item  Assume that $\cov(\Nwf)<\bfrak$. Then $\cov(\Nwf)=\min\set{\cov(\Swf_{I, \varepsilon})}{I\in\Ior\text{\ and\ }\varepsilon\in\ell^1_+}.$\smallskip
    
    \item Assume that $\non(\Nwf)>\dfrak$. Then $\non(\Nwf)=\sup\set{\non(\Swf_{I, \varepsilon})}{I\in\Ior\text{\ and\ }\varepsilon\in\ell^1_+}.$
\end{enumerate}    
\end{corollary}
\begin{proof}
By \cite{bartosmall}, we have the following, see also~\cite[Thm.~6.1]{CM23} for more details.
\begin{enumerate}[label=\normalfont(\alph*)]
    \item If $\cov(\Nwf)<\bfrak$ then $\cov(\Nwf)=\min\set{\balc_{b,h}}{b,h\in\baire\ \text{ and } \sum_{n\in\omega}\frac{h(n)}{b(n)}<\infty}.$
    \item If $\non(\Nwf)>\dfrak$ then $\non(\Nwf)=\sup\set{\dalc_{b,h}}{b,h\in\baire\ \text{ and } \sum_{n\in\omega}\frac{h(n)}{b(n)}<\infty}.$
\end{enumerate}
To conclude the proof, it is enough to apply \autoref{thm:alocsmall}.
\end{proof}

\autoref{lem:alcI} focuses on the family $\Swf_{I,\varepsilon}$. We obtain similar results for $\Ewf_{I,\varepsilon}$ as shown in~\autoref{c:lc}.

\begin{lemma}\label{c:lc}\ 
\begin{enumerate}[label=\rm(\alph*)]
    \item\label{c-1} Let $I\in\Ior$ and $\varepsilon\in\ell^1_+$. Then there are $h, b\in\baire$ such that $\sum_{n\in\omega}\frac{h(n)}{b(n)}<\infty$ and $\Lc(b,h)\leqT\Cbf_{\Ewf_{I,\varepsilon}}$ In particular, $\non(\Ewf_{I,\varepsilon})\leq\blc_{b,h}$ and $\dlc_{b,h}\leq \cov(\Ewf_{I,\varepsilon})$.   

    \item\label{c0}  Let $h, b\in\baire$ be such that $\sum_{n\in\omega}\frac{h(n)}{b(n)}<\infty$. Then there are $I\in\Ior$ and $\varepsilon\in\ell^1_+$ such that $\blc_{b,h}\leq\non(\Ewf_{I,\varepsilon})$ and $\cov(\Ewf_{I,\varepsilon})\leq\dlc_{b,h}$.
\end{enumerate}
\end{lemma}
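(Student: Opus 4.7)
The strategy is to imitate the proof of \autoref{lem:alcI}, but swap the eventually-infinitely-often relation for the eventually-always one: replace $[\varphi]_\infty$ by $[\varphi]_*$ throughout and replace the relational system $\aLc(b,h)^\perp$ by $\Lc(b,h)$. Concretely, for \ref{c-1} I will build a Tukey connection $\Lc(b,h)\leqT \Cbf_{\Ewf_{I,\varepsilon}}$, which by \autoref{cor:Tukeyval}\ref{Tukeyval:b} gives $\non(\Ewf_{I,\varepsilon})\leq\blc_{b,h}$ and $\dlc_{b,h}\leq\cov(\Ewf_{I,\varepsilon})$; and for \ref{c0} I will build a Tukey connection in the opposite direction, $\Cbf_{\Ewf_{I,\varepsilon}}\leqT\Lc(b,h)$.

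For \ref{c-1}, if $(I,\varepsilon)$ is not $\Ewf$-contributive choose any admissible $(b,h)$ and the claim is vacuous; otherwise set $b(n):=|{^{I_n}2}|=2^{|I_n|}$ and $h(n):=\lfloor 2^{|I_n|}\varepsilon_n\rfloor$, so $\sum_n h(n)/b(n)\leq\sum_n\varepsilon_n<\infty$. Given $X\in\Ewf_{I,\varepsilon}$, fix $\varphi_X\in\Sigma_{I,\varepsilon}$ with $X\subseteq[\varphi_X]_*$; since $|\varphi_X(n)|/(2^{|I_n|}\varepsilon_n)\to 0$ we have $|\varphi_X(n)|\leq h(n)$ for all $n$ beyond some $n_0$, and trimming $\varphi_X$ to $\emptyset$ on $[0,n_0]$ produces $\varphi_X^\ast\in\Scal(b,h)$ with $X\subseteq[\varphi_X^\ast]_*$. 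Put $\Psi_+(X):=\varphi_X^\ast$ and define $\Psi_-\colon\prod b\to\cantor$ by concatenation $\Psi_-(s):=s_0{}^\frown s_1{}^\frown\dots$. If $\Psi_-(s)\in X\subseteq[\varphi_X^\ast]_*$, then $\forall^\infty n\colon s(n)=\Psi_-(s)\restriction I_n\in\varphi_X^\ast(n)$, i.e., $s\in^\ast\Psi_+(X)$, so $(\Psi_-,\Psi_+)$ is the desired connection.

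For \ref{c0}, I will first replace $b$ by a more convenient $b'$. Let $\seq{I_n}{n\in\omega}\in\Ior$ with $|I_n|=\lfloor\log_2 b(n)\rfloor$ and set $b'(n):={^{I_n}2}$, so that $b(n)/2<|b'(n)|\leq b(n)$ and $\sum_n h(n)/|b'(n)|<\infty$. By \autoref{a9} pick $\delta\in{^\omega(0,\infty)}$ with $\delta\to\infty$ and $\sum_n\delta_n h(n)/|b'(n)|<\infty$, then define $\varepsilon_n:=\delta_n h(n)/|b'(n)|$; this $\varepsilon$ lies in $\ell^1_+$ and satisfies $h(n)/(|b'(n)|\varepsilon_n)=1/\delta_n\to 0$, so $\Scal(b',h)\subseteq\Sigma_{I,\varepsilon}$. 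Define $\Psi_-\colon\cantor\to\prod b'$ by $\Psi_-(x)(n):=x\restriction I_n$ and $\Psi_+\colon\Scal(b',h)\to\Ewf_{I,\varepsilon}$ by $\Psi_+(\varphi):=[\varphi]_*$. If $\Psi_-(x)\in^\ast\varphi$ then $\forall^\infty n\colon x\restriction I_n\in\varphi(n)$, so $x\in[\varphi]_*=\Psi_+(\varphi)$. This shows $\Cbf_{\Ewf_{I,\varepsilon}}\leqT\Lc(b',h)$, and since $|b'(n)|\leq b(n)$ \autoref{cardinaltyDep} yields $\Lc(b',h)\leqT\Lc(b,h)$; composing gives the Tukey connection we wanted, from which $\blc_{b,h}\leq\non(\Ewf_{I,\varepsilon})$ and $\cov(\Ewf_{I,\varepsilon})\leq\dlc_{b,h}$ follow.

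There is essentially no new obstacle beyond \autoref{lem:alcI}. The only point that needs mild care is verifying that the trimmed $\varphi_X^\ast$ in \ref{c-1} still witnesses $X\in\Ewf_{I,\varepsilon}$ (it does, since finitely many values don't affect $[\cdot]_*$), and that the $\varepsilon$ manufactured from $\delta$ via \autoref{a9} in \ref{c0} genuinely enforces $\Scal(b',h)\subseteq\Sigma_{I,\varepsilon}$; both are routine.
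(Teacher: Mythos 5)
Your proposal is correct and follows precisely the route the paper intends: the paper's own proof of this lemma reads only ``Similar to the proof of \autoref{lem:alcI},'' and your write-up is exactly that adaptation, replacing $[\cdot]_\infty$ by $[\cdot]_*$, $\in^\infty$ by $\in^*$, and $\aLc(b,h)^\perp$ by $\Lc(b,h)$, with all the bookkeeping (trimming $\varphi_X$, passing from $b$ to $b'$ via \autoref{cardinaltyDep}, manufacturing $\varepsilon$ from $\delta$ via \autoref{a9}) carried out correctly. No gaps.
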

\begin{proof}
Similar to the proof of \autoref{lem:alcI}.
\end{proof}

\autoref{Thm:m2} follows directly from the result mentioned above.

\begin{theorem}\label{a11}\
\begin{enumerate}[label=\normalfont(\alph*)]
    \item $\min\set{\cov(\Ewf_{I, \varepsilon})}{I\in\Ior\text{\ and\ }\varepsilon\in\ell^1_+}=\min\largeset{\dlc_{b,h}}{b,h\in{^\omega\omega}\text{\ and\ } \sum_{n\in\omega}\frac{h(n)}{b(n)}<\infty}.$
    \item $\sup\set{\non(\Ewf_{I, \varepsilon})}{I\in\Ior\text{\ and\ }\varepsilon\in\ell^1_+}=\sup\largeset{\blc_{b,h}}{b,h\in{^\omega\omega}\text{\ and\ } \sum_{n\in\omega}\frac{h(n)}{b(n)}<\infty}.$
\end{enumerate}
\end{theorem}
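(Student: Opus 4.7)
The plan is to derive both equalities in \autoref{a11} as immediate consequences of \autoref{c:lc}, using the standard translation of Tukey connections into inequalities between cardinal invariants provided by \autoref{cor:Tukeyval} together with \autoref{exm:Iwf}. All the genuine combinatorial work already lives in \autoref{c:lc}, so the remaining task is purely bookkeeping: unpack the ``in particular'' clauses and take the appropriate minima and suprema.

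For part (a), I would argue both inequalities separately. For ``$\geq$'', fix an arbitrary $(I,\varepsilon)\in\Ior\times\ell^1_+$. By \autoref{c:lc}~\ref{c-1}, there exist $b,h\in\baire$ with $\sum_n h(n)/b(n)<\infty$ satisfying $\dlc_{b,h}\leq\cov(\Ewf_{I,\varepsilon})$; hence
\[
\min\largeset{\dlc_{b',h'}}{b',h'\in\baire,\ \textstyle\sum_n h'(n)/b'(n)<\infty}\leq\cov(\Ewf_{I,\varepsilon}),
\]
and taking the minimum over $(I,\varepsilon)$ on the right yields the desired inequality. For ``$\leq$'', fix any admissible pair $(b,h)$ and apply \autoref{c:lc}~\ref{c0} to obtain $(I,\varepsilon)\in\Ior\times\ell^1_+$ with $\cov(\Ewf_{I,\varepsilon})\leq\dlc_{b,h}$; then $\min_{(I,\varepsilon)}\cov(\Ewf_{I,\varepsilon})\leq\dlc_{b,h}$, and taking the minimum over $(b,h)$ finishes the argument.

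Part (b) proceeds by a perfectly symmetric argument, with $\blc$ and $\non$ replacing $\dlc$ and $\cov$, and supremum replacing minimum. From \autoref{c:lc}~\ref{c-1} one gets, for each $(I,\varepsilon)$, a pair $(b,h)$ with $\non(\Ewf_{I,\varepsilon})\leq\blc_{b,h}$, giving ``$\leq$'' after taking the supremum on the right and then on the left. Conversely, from \autoref{c:lc}~\ref{c0} one obtains, for each $(b,h)$, a pair $(I,\varepsilon)$ with $\blc_{b,h}\leq\non(\Ewf_{I,\varepsilon})$, yielding ``$\geq$''.

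There is essentially no hard step here; the only minor wrinkle is the treatment of trivial parameters. If $(I,\varepsilon)$ is not $\Ewf$-contributive then $\Ewf_{I,\varepsilon}=\{\emptyset\}$, so by the convention in \autoref{remark_basic} we have $\cov(\Ewf_{I,\varepsilon})=\infty$, and such pairs do not affect the minimum in part (a); the supremum in part (b) behaves analogously because $\non(\Ewf_{I,\varepsilon})$ is undefined (or may be set to $0$) on trivial pairs. Thus we may implicitly restrict both sides to the contributive parameters, and the above chain of inequalities produces the equalities claimed.
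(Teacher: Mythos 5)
Your proof is correct and is essentially the same as the paper's: the paper's proof of Theorem \ref{a11} simply says ``analogous to Theorem \ref{thm:alocsmall},'' which means exactly the unwinding of Lemma \ref{c:lc} that you carry out. The only small inaccuracy is the final aside: for non-contributive pairs $\non(\Ewf_{I,\varepsilon})$ is not undefined but equals $1$ (every nonempty set fails to lie in $\{\emptyset\}$), which is harmless for the supremum just as $\cov(\Ewf_{I,\varepsilon})=\infty$ is harmless for the minimum.
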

\begin{proof}
The proof is analogous to the proof of \autoref{thm:alocsmall}.
\end{proof}

The last provable result we are going to mention in this paper is establishing a connection between the uniformity and covering of $\Ewf_{I,\varepsilon}$ and the uniformity and covering of the $\sigma$-ideal of null-additive sets. The sum on $\cantor$ is the coordinate-wise sum modulo $2$ and the addition of sets is defined in a natural way.

\begin{definition}\label{def:na}
A set $X\subseteq\cantor$ is \emph{null-additive} if $X+A\in\Nwf$ for all $A\in \Nwf$. Denote by $\NAwf$ the $\sigma$-ideal of the null-additive sets in $\cantor$.
\end{definition}

We employ the following characterization of $\NAwf$ due to Shelah \cite{shelahnulladd}.

\begin{theorem}[{\cite{shelahnulladd}, see also~\cite[Thm.~2.7.18~(3)]{bartjud}}]\label{NAchar}
A set $X\subseteq {^\omega2}$ is null-additive if and only if for all $\seq{I_n}{n\in\omega}\in\Ior$ there is $\varphi\in \prod_{n\in\omega}\Pwf(^{I_n}2)$ such that 
\begin{enumerate}[label=\normalfont(\alph*)]
    \item\label{NAa} $\forall n\in\omega:\abs{\varphi(n)}\leq n$,
    \item\label{NAb} $\forall x\in X\ \forall^\infty n\in\omega: x{\restriction} I_n\in\varphi(n)$.
\end{enumerate}
\end{theorem}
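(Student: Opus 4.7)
The plan is to prove the two directions of \autoref{NAchar} separately, with the bulk of the work concentrated in the forward direction.

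For the easy implication ($\Leftarrow$), I would start by invoking the standard description of null sets: every $A\in\Nwf$ admits a partition $J=\seq{J_n}{n\in\omega}\in\Ior$ and $\psi\in\prod_n\Pwf(^{J_n}2)$ with $A\subseteq\set{y\in\cantor}{\exists^\infty n: y{\restriction}J_n\in\psi(n)}$ and $\sum_n|\psi(n)|/2^{|J_n|}<\infty$. Moreover, by a refinement argument in the spirit of \autoref{a9}, I can arrange that even $\sum_n n\cdot|\psi(n)|/2^{|J_n|}<\infty$ (replacing $J$ by a coarser partition and $\psi$ by the induced extensions, which only multiplies the ratio by a convergent factor). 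Apply the combinatorial hypothesis on $X$ to this partition $J$ to obtain $\varphi\in\prod_n\Pwf({^{J_n}2})$ with $|\varphi(n)|\leq n$ and $X\subseteq\set{x}{\forall^\infty n: x{\restriction}J_n\in\varphi(n)}$. Using that the group operation on $\cantor$ is coordinate-wise XOR, I would observe
\[X+A\subseteq\set{z\in\cantor}{\exists^\infty n: z{\restriction}J_n\in \varphi(n)+\psi(n)}\]
where $|\varphi(n)+\psi(n)|\leq n\cdot|\psi(n)|$. By the tailored summability, this covering set lies in $\Swf^\star\subseteq\Nwf$, so $X+A\in\Nwf$, proving null-additivity.

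For the hard implication ($\Rightarrow$), fix $I\in\Ior$ and assume $X$ is null-additive. I would construct one single null set $A$ depending on $I$ so that the null-additivity of $X$ with respect to $A$ produces the desired slalom $\varphi$. Concretely, I would choose $A=[\eta]_*$ for some $\eta\in\Sigma_{I,\varepsilon}$ (so $A\in\Ewf$) where $\eta(n)\subseteq {^{I_n}2}$ is picked so that $|\eta(n)|$ is close to $2^{|I_n|}/(n+1)$ but $\sum_n|\eta(n)|/2^{|I_n|}$ is still finite (made possible by a rapidly growing $|I_n|$; otherwise the statement is vacuous after refining). Since $X+A\in\Nwf$, I can cover $X+A$ by a null set of the small-set form on the partition $I$: there is $\psi\in\prod_n\Pwf({^{I_n}2})$ with $\sum_n|\psi(n)|/2^{|I_n|}<\infty$ and $X+A\subseteq\set{z}{\exists^\infty n: z{\restriction}I_n\in\psi(n)}$. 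Now define, for each $n$, $\varphi(n)$ to consist of those $s\in{^{I_n}2}$ such that $s+t\in\psi(n)$ for ``many'' $t\in\eta(n)$ (a pigeonhole threshold). Counting the pairs $(s,t)\in\varphi(n)\times\eta(n)$ with $s+t\in\psi(n)$, the threshold can be chosen so that $|\varphi(n)|\leq n$, while any $x\in X$ is forced into $\varphi(n)$ for all but finitely many $n$: indeed, $x+\eta(n)$ has size $|\eta(n)|$ translates, and for cofinitely many $n$ the covering of $X+A$ places enough of them in $\psi(n)$ to meet the threshold defining $\varphi(n)$.

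The main obstacle is the forward direction, specifically the calibration between the size of $\eta(n)$ (governing how dense $A$ is), the summability of $|\psi(n)|/2^{|I_n|}$, and the threshold defining $\varphi$. The delicate balance is that $\eta(n)$ must be large enough so that $x\in X$ generates many $x+y$ ($y\in A$) forcing membership of $x{\restriction}I_n$ into the pigeonhole-distilled $\varphi(n)$, yet small enough for $A$ to remain null. This is precisely Shelah's trick in \cite{shelahnulladd}, and I would import his counting estimate verbatim to close the argument. A subtle point I would watch out for is that $I$ may fail to grow fast enough for a non-trivial $\eta\in\Sigma_{I,\varepsilon}$ to exist; in that case the combinatorial conclusion is immediate since any $X\subseteq\cantor$ trivially satisfies (a)--(b) when ${^{I_n}2}$ is eventually small enough, and the argument collapses to a direct verification.
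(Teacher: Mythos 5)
First, note that the paper does not prove \autoref{NAchar} at all: it is quoted from Shelah~\cite{shelahnulladd} (see also \cite[Thm.~2.7.18]{bartjud}), so there is no in-paper argument to compare against; your sketch has to stand on its own, and it does not. The step you invoke in both directions --- ``every $A\in\Nwf$ admits a single partition $J\in\Ior$ and $\psi$ with $\sum_n|\psi(n)|/2^{|J_n|}<\infty$ and $A\subseteq\set{y}{\exists^\infty n: y{\restriction}J_n\in\psi(n)}$'' --- is false: it asserts precisely that every null set is small$^\star$, whereas $\Swf^\star\subsetneq\Nwf$ (Bartoszy\'nski; the paper records this right after \autoref{a1}). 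The correct covering statement is \autoref{nullapprox} (with $F_n\subseteq{}^{n}2$), from which one only gets a decomposition into \emph{two} small$^\star$ pieces on two interleaved interval partitions (the $2$-small$^\star$ coding; \autoref{lem:U2small} with a single null set). With that repair your ($\Leftarrow$) direction does survive: cover $A$ by $A_1\cup A_2$ on partitions $J^1,J^2$, coarsen as you indicate to force $\sum_n n\,|\psi^i(n)|/2^{|J^i_n|}<\infty$, apply the hypothesis to each $J^i$, and conclude that each $X+A_i$ is null by Borel--Cantelli.

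In the ($\Rightarrow$) direction the same false claim is used in a form that cannot be repaired this way: you need $X+A$ to be covered in small$^\star$ form \emph{on the prescribed partition} $I$ so that the pigeonhole can be run block by block, and no covering lemma provides that (the cover naturally lives on the initial-segment form of \autoref{nullapprox} or on a different partition, and working around exactly this is where the real proof does its work). Your choice of $A$ is also internally inconsistent: $|\eta(n)|$ of order $2^{|I_n|}/(n+1)$ is incompatible with $\eta\in\Sigma_{I,\varepsilon}$, which forces $\sum_n|\eta(n)|/2^{|I_n|}<\infty$ (though $[\eta]_*\in\Ewf$ would hold regardless). Most importantly, the pigeonhole bound $|\varphi(n)|\leq n$ only helps once you know that for all but finitely many $n$ \emph{many} $t\in\eta(n)$ satisfy $x{\restriction}I_n+t\in\psi(n)$; what the covering of $X+A$ actually gives is, for each single $y\in A$, infinitely many good blocks. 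The exchange of quantifiers from ``each translate, infinitely many blocks'' to ``almost every block, many translates'' is the heart of Shelah's argument, and you explicitly defer it (``import his counting estimate verbatim''). As written, the hard direction is a citation rather than a proof, so the proposal has genuine gaps.
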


Using the previous characterization of $\NAwf$, we easily get:

\begin{prop}\label{NAbound}\
\begin{enumerate}[label=\normalfont(\alph*)]
    \item  $\non(\NAwf)\leq \sup\set{\non(\Ewf_{I,\varepsilon})}{I\in\Ior\mathrm{\ and\ }\varepsilon\in\ell^1_+}$. 
    \item $\cov(\NAwf)\geq \min\set{\cov(\Ewf_{I,\varepsilon})}{I\in\Ior\mathrm{\ and\ } \varepsilon\in\ell^1_+}$.
\end{enumerate}

\end{prop}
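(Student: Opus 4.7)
The plan is to reduce both inequalities to a single inclusion of $\sigma$-ideals: I will exhibit a pair $(I,\varepsilon)\in\Ior\times\ell^1_+$ for which $\NAwf\subseteq\Ewf_{I,\varepsilon}$. Granted this, part (a) follows because any $X\notin\Ewf_{I,\varepsilon}$ is automatically non-null-additive, so
$\non(\NAwf)\leq\non(\Ewf_{I,\varepsilon})\leq\sup\set{\non(\Ewf_{I',\varepsilon'})}{I'\in\Ior,\,\varepsilon'\in\ell^1_+}$;
and part (b) follows because every $\NAwf$-cover of $\cantor$ is also an $\Ewf_{I,\varepsilon}$-cover, hence $\cov(\Ewf_{I,\varepsilon})\leq\cov(\NAwf)$ and therefore $\min\set{\cov(\Ewf_{I',\varepsilon'})}{I'\in\Ior,\,\varepsilon'\in\ell^1_+}\leq\cov(\NAwf)$.

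For the construction I will take any $I\in\Ior$ with $\abs{I_n}\geq 5\lceil\log_2(n+1)\rceil$ and set $\varepsilon_n:=(n+1)^{-2}$. Then clearly $\varepsilon\in\ell^1_+$, and since $2^{\abs{I_n}}\cdot\varepsilon_n\to\infty$, \autoref{charofrEIe} guarantees that the pair $(I,\varepsilon)$ is $\Ewf$-contributive; in particular, the relevant cardinals are not affected by the degenerate conventions of \autoref{remark_basic}, and the supremum/minimum on the right-hand side are meaningful.

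The heart of the argument is the inclusion $\NAwf\subseteq\Ewf_{I,\varepsilon}$. Given $X\in\NAwf$, Shelah's characterization \autoref{NAchar}, applied to our fixed $I$, produces $\varphi\in\prod_{n\in\omega}\Pwf({}^{I_n}2)$ with $\abs{\varphi(n)}\leq n$ and $X\subseteq[\varphi]_*$. It then remains to verify $\varphi\in\Sigma_{I,\varepsilon}$, which reduces to checking that $\abs{\varphi(n)}/(2^{\abs{I_n}}\varepsilon_n)\leq n(n+1)^2/2^{\abs{I_n}}$ tends to $0$; this is immediate from the growth condition $2^{\abs{I_n}}\geq (n+1)^5$. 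I do not expect a real obstacle anywhere: the entire content of the proof is the compatibility of the cardinality bound $\abs{\varphi(n)}\leq n$ coming from \autoref{NAchar} with the asymptotic requirement in the definition of $\Sigma_{I,\varepsilon}$, which is arranged by a sufficiently generous lower bound on $\abs{I_n}$ paired with a summable $\varepsilon$.
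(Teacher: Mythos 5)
Your proof is correct and follows essentially the same route as the paper: you pick one $\Ewf$-contributive pair $(I,\varepsilon)$, invoke Shelah's characterization (\autoref{NAchar}) of $\NAwf$ applied to that $I$, and check the resulting slalom lands in $\Sigma_{I,\varepsilon}$ because the cardinality bound $\abs{\varphi(n)}\leq n$ is dominated by $2^{\abs{I_n}}\varepsilon_n$. The paper packages the same argument as a Tukey connection $\Cbf_{\Ewf_{I,\varepsilon}}\leqT\Cbf_{\NAwf}$ with $\Psi_-=\id$, $\Psi_+(X)=[\varphi_X]_*$, and chooses $\abs{I_n}\geq\log_2(n^2\cdot 2^n)$, $\varepsilon_n=2^{-n}$, but since your $\Psi_+$ is just the ideal inclusion $\NAwf\subseteq\Ewf_{I,\varepsilon}$ this is the identical idea with different (equally serviceable) constants.
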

\begin{proof}
We will show that there are $I\in\Ior, \varepsilon\in\ell^1_+$, such that $\non(\NAwf)\leq\non(\Ewf_{I,\varepsilon})$ and $\cov(\Ewf_{I,\varepsilon})\leq \cov(\NAwf)$. Let $I\in\Ior$ be such that $\abs{I_0}$ is an arbitrary natural number and $\abs{I_n}\geq \log_2(n^2\cdot 2^n)$,  (i.e., $2^{\abs{I_n}}\geq n^2\cdot 2^n$) for all $n\in\omega\smallsetminus\{0\}$. Define $\varepsilon\in\ell^1_+$ by $\varepsilon_n=2^{-n}$ for all $n\in\omega$. Note that such a pair is $\Ewf$-contributive by \autoref{charofrEIe}~\ref{charofrEIe1lim}, since 
$$\frac{1}{2^{\abs{I_n}}\cdot \varepsilon_n}\leq \frac{1}{n^2\cdot 2^n\cdot 2^{-n}}=\frac{1}{n^2}\to 0.$$

We shall show $\Cbf_{\Ewf_{I,\varepsilon}}\leqT\Cbf_{\NAwf}$. Define $\Psi_-\colon{^\omega 2}\to {^\omega 2}$ by putting $\Psi_-=\id_{{^\omega 2}}$. The $\Psi_+\colon\NAwf\to\Ewf_{I,\varepsilon}$ is defined as follows. For any $X\in\NAwf$, there is $\varphi_X\in\prod_{n\in\omega}\left({^{I_n}}2\right)$ satisfying conditions $\ref{NAa}$ and \ref{NAb} of \autoref{NAchar}. Note that $\varphi_X\in\Sigma_{I,\varepsilon}$ by
$$\frac{\abs{\varphi_X(n)}}{2^{\abs{I_n}}\cdot\varepsilon_n}\leq\frac{n}{2^{\abs{I_n}}\cdot\varepsilon_n}\leq \frac{n}{n^2\cdot2^{n}\cdot 2^{-n}}=\frac{1}{n}\to 0.$$ Hence, put $\Psi_+(X)=[\varphi_X]_*$.

Let $y\in{^\omega2}$, $X\in \NAwf$ be such that $\Psi_-(y)=y\in X$. Then, of course, $y\in [\varphi_X]_*=\Psi_+(X)$ because $X\subseteq [\varphi_X]_*$.
\end{proof}

\section{Consistency results}\label{sec:cs}

In this section, different forcing models are used to show that ZFC cannot prove more inequalities of the cardinal characteristics associated with our new ideals with the cardinals in Cicho\'n's diagram (see~\autoref{cichonext}). As usual, we start looking at the behavior of the cardinal characteristics associated with our ideals in well-known forcing models.

The following items contain details that we skip most of, however, details can be found in the references. We consider just contributive pairs $(I,\varepsilon)$ (in fact, $\Ewf$-contributive) throughout the whole section.

\begin{enumerate}[label=\rm(M\arabic*)]
    \item\label{cohen} Let $\lambda$ be an infinite cardinal such that $\lambda^{\aleph_0}=\lambda$ and let $h$ and $b$ be in $\baire$ satisfying the assumptions of~\autoref{lem:alcI}~\ref{alcI-b}. Then $\Cor_\lambda$ forces $\non(\Mwf)=\balc_{b,h}=\aleph_1<\cov(\Mwf)=\dalc_{b,h}=\cfrak=\lambda$ (see e.g.~\cite{Brendlecurse}). In particular, $\Cor_\lambda$ forces $\add(\Ewf_{I,\varepsilon})=\non(\Ewf_{I,\varepsilon})=\aleph_1$ and $\cov(\Ewf_{I,\varepsilon})=\cof(\Ewf_{I,\varepsilon})=\lambda$ for any $I$ and $\varepsilon$. On the other hand, by using ~\autoref{lem:alcI}~\ref{alcI-b}, we can find $I_{b,h}\in\Ior$ and $\varepsilon_{b,h}\in\ell^1_+$ such that $\dalc_{b,h}\leq\non(\Swf_{I_{b,h},\varepsilon_{b,h}})$ and $\cov(\Swf_{I_{b,h},\varepsilon_{b,h}})\leq\balc_{b,h}$. Hence, in $V^{\Cor_\lambda}$, $\non(\Swf_{I_{b,h},\varepsilon_{b,h}})=\lambda$ and $\cov(\Swf_{I_{b,h},\varepsilon_{b,h}})=\aleph_1$.

    \item\label{m2} Let $\aleph_1\leq\kappa\leq\lambda=\lambda^{\aleph_0}$ with $\kappa$ regular and let $\varrho,\rho\in\omega^\omega$ be such that $\sum_{i<\omega}\frac{\rho(i)^i}{\varrho(i)}<\infty$. Then by using~\autoref{lem:alcI}~\ref{alcI-b}, there are $I_{\varrho,\rho^\id}\in\Ior$ and $\varepsilon_{\varrho,\rho^\id}\in\ell^1_+$ such that $\cov(\Swf_{I_{\varrho,\rho^\id},\varepsilon_{\varrho,\rho^\id}})\leq\balc_{\varrho,\rho^\id}$ and $\dalc_{\varrho,\rho^\id}\leq\non(\Swf_{I_{\varrho,\rho^\id},\varepsilon_{\varrho,\rho^\id}})$.\footnote{The function $\rho^\id$ is defined by $\rho^\id(i)=\rho(i)^i$ for all $i\in\omega$.}  Now let $\Eor_{\pi}$ be a FS iteration of eventually different real forcing of length $\pi=\lambda\kappa$. Then, in $V^{\Eor_{\pi}}$, $\cov(\Nwf)=\bfrak=\aleph_1\leq\non(\Ewf)=\cov(\Ewf)=\kappa\leq\dfrak=\lambda$ (see e.g.~\cite{CaraboutE}). Moreover, by~\cite[Lem.~4.24]{CM}, $\Eor_{\pi}$ forces that $\balc_{\varrho,\rho^\id}=\aleph_1$ and $\dalc_{\varrho,\rho^\id}\leq\lambda$. Hence, in $V^{\Eor_{\pi}}$, $\cov(\Swf_{I_{\varrho,\rho^\id},\varepsilon_{\varrho,\rho^\id}})=\aleph_1$ and $\non(\Swf_{I_{\varrho,\rho^\id},\varepsilon_{\varrho,\rho^\id}})=\lambda$. On the other hand, given $I\in\Ior$ and $\varepsilon\in\ell^1_+$, by~\autoref{c:lc}~\ref{c-1}, we can find $b_{I,\varepsilon}$ and $h_{I,\varepsilon}$ such that $\non(\Ewf_{I,\varepsilon})\leq\blc_{b_{I,\varepsilon},h_{I,\varepsilon}}$ and $\dlc_{b_{I,\varepsilon},h_{I,\varepsilon}}\leq \cov(\Ewf_{I,\varepsilon})$. Then, in $V^{\Eor_{\pi}}$, we have that $\non(\Ewf_{I,\varepsilon})=\aleph_1$ and $\cov(\Ewf_{I,\varepsilon})=\cfrak=\lambda$ because $\Eor_{\pi}$ forces $\blc_{b,h}=\aleph_1$ and $\dlc_{b,h}=\lambda$ for all $b,h\in\baire$ with $0<h<b$ since $\Eor$ is $\sigma$-centered (see~\cite[Lem.~4.24]{CM}).

    \item Assume $\aleph_1\leq\kappa\leq\lambda=\lambda^{\aleph_0}$ with $\kappa$ regular. Let $\Bor_{\pi}$ be a FS iteration of random forcing of length $\pi=\lambda\kappa$. Then, in $V^{\Bor_{\pi}}$, $\non(\Ewf)=\ \bfrak=\aleph_1\leq\cov(\Nwf)=\non(\Nwf)=\kappa\leq\cov(\Ewf)=\ \dfrak=\cfrak=\lambda$ (see \cite{BS1992} or see also~\cite[Thm.~5.6]{Car4E}).
    
    In particular, in $V^{\Bor_{\pi}}$, we have that  $\add(\Swf_{I,\varepsilon})=\non(\Ewf_{I,\varepsilon})=\aleph_1$, 
     and $\cov(\Ewf_{I,\varepsilon})=\cof(\Swf_{I,\varepsilon})=\cfrak=\lambda$ for any $I$ and $\varepsilon$.

    \item Assume $\aleph_1\leq\kappa\leq\lambda=\lambda^{\aleph_0}$ with $\kappa$ regular. Let $\Dor_{\pi}$ be a FS iteration of Hechler forcing of length $\pi=\lambda\kappa$. Then, in $V^{\Dor_{\pi}}$, $\cov(\Nwf)=\aleph_1\leq\dfrak=\bfrak=\kappa\leq\non(\Nwf)=\cfrak=\lambda$ (see e.g.~\cite[Thm.~5]{mejiamatrix}). On the other hand, given $I\in\Ior$ and $\varepsilon\in\ell^1_+$, by~\autoref{c:lc}~\ref{c-1}, we can find $b_{I,\varepsilon}$ and $h_{I,\varepsilon}$ such that $\non(\Ewf_{I,\varepsilon})\leq\blc_{b_{I,\varepsilon},h_{I,\varepsilon}}$ and $\dlc_{b_{I,\varepsilon},h_{I,\varepsilon}}\leq \cov(\Ewf_{I,\varepsilon})$. Then, in $V^{\Dor_{\pi}}$, we have that $\add(\Swf_{I,\varepsilon})=\non(\Ewf_{I,\varepsilon})=\aleph_1$, and $\cof(\Swf_{I,\varepsilon})=\cov(\Ewf_{I,\varepsilon})=\lambda=\cfrak$ because  $\Dor_{\pi}$ forces $\blc_{b_{I,\varepsilon},h_{I,\varepsilon}}=\aleph_1$ and $\dlc_{b_{I,\varepsilon},h_{I,\varepsilon}}=\lambda$ by~\cite[Lem.~4.24]{CM}.  

    As in~\ref{m2}, we can find $I'$ and $\varepsilon'$ such that $\cov(\Swf_{I',\varepsilon'})=\aleph_1$ and $\non(\Swf_{I',\varepsilon'})=\lambda$ in  $V^{\Dor_{\pi}}$.

    \item\label{m5} Assume $\aleph_1\leq\kappa\leq\lambda=\lambda^{\aleph_0}$ with $\kappa$ regular. Let $h$ and $b$ be in $\baire$ satisfying the assumptions of~\autoref{c:lc}~\ref{c0} and let $\Por$ be a FS iteration of the $(b,h)$-localization forcing due to Brendle and Mej\'ia~\cite{BrM}. Then, in $V^\Por$, $\bfrak=\cov(\Nwf)=\aleph_1\leq\blc_{b,h}=\non(\Ewf)=\cov(\Ewf)=\dlc_{b,h}=\kappa\leq\dfrak=\cfrak=\lambda$ (see~\cite{Car4E} or see also~\cite{CaraboutE}). By~\autoref{c:lc}~\ref{c0}, we can find $I_{b,h}\in\Ior$ and $\varepsilon_{b,h}\in\ell^1_+$ such that $\blc_{b,h}\leq\non(\Ewf_{I_{b,h},\varepsilon_{b,h}})$ and $\cov(\Ewf_{I_{b,h},\varepsilon_{b,h}})\leq\dlc_{b,h}$. Hence, in the generic extension,  $\non(\Ewf_{I_{b,h},\varepsilon_{b,h}})=\cov(\Ewf_{I_{b,h},\varepsilon_{b,h}})=\kappa$.

    \item The model obtained by forcing with $\Aor_\kappa$ (random algebra for adding $\kappa$ random reals) with $\cf(\kappa)>\aleph_0$
over a model of CH. It satisfies that $\non(\Nwf)=\dfrak=\aleph_1$ and $\cov(\Nwf)=\cfrak=\kappa$ (see e.g.~\cite[Model 7.6.8]{bartjud}). In this model, $\non(\Swf_{I,\varepsilon})=\non(\Ewf_{I,\varepsilon})=\aleph_1$ and $\cov(\Swf_{I,\varepsilon})=\cov(\Ewf_{I,\varepsilon})=\kappa$ for any $I$ and $\varepsilon$.

 \item The dual random model. This model is obtained by forcing with $\Bor_{\omega_1}$ over a model for $\mathrm{MA}+\cfrak\geq\aleph_2$. It satisfies that $\non(\Nwf)=\aleph_1$ and $\cov(\Nwf)=\bfrak=\cfrak=\aleph_2$ (see e.g.~\cite[Model 7.6.7]{bartjud}). In this model, $\non(\Swf_{I,\varepsilon})=\non(\Ewf_{I,\varepsilon})=\aleph_1$ and $\cov(\Swf_{I,\varepsilon})=\cov(\Ewf_{I,\varepsilon})=\aleph_2$ for any $I$ and $\varepsilon$.

 \item\label{ma} Let $h, b\in\baire$ be such that $\sum_{n\in\omega}\frac{h(n)}{b(n)}<\infty$. Let $\Mor_{\omega_2}$ be a CS (countable support) iteration of length $\omega_2$ of Mathias forcing. Then, in $V^{\Mor_{\omega_2}}$, $\add(\Ewf)=\cov(\Ewf)=\aleph_1$, and  $\non(\Ewf)=\cof(\Ewf)=\cfrak=\aleph_2$ (see e.g~\cite[Lem.~2.4]{CaraboutE}). It is also well-known that $\Mor_{\omega_2}$ forces 
$\bfrak=\aleph_2$ (see e.g.~\cite[Sec. 7.4.A]{bartjud}. 

Moreover,  since $\Mor$ satisfies the Laver property (see~\cite[Sec.~7.4.A]{bartJudah}), we get $\dlc_{b,h}=\aleph_1$ in $V^{\Mor_{\omega_2}}$ (see~\cite[Theorem~4.3]{kada}), consequently, in $V^{\Mor_{\omega_2}}$, by applying~\autoref{c:lc}~\ref{c0}, we can find $I_{b,h}\in\Ior$ and $\varepsilon_{b,h}\in\ell^1_+$ such that $\cov(\Ewf_{I_{b,h},\varepsilon_{b,h}})\leq\dlc_{b,h}$. Hence, $\cov(\Ewf_{I_{b,h},\varepsilon_{b,h}})=\aleph_1$.

 \item\label{miller} Let $h, b\in\baire$ be such that $\sum_{n\in\omega}\frac{h(n)}{b(n)}<\infty$. Let $\MIor_{\omega_2}$ be a CS iteration of length $\omega_2$ of Miller forcing. Then, in $V^{\MIor_{\omega_2}}$, $\add(\Ewf)=\cov(\Ewf)=\non(\Ewf)=\aleph_1$, and  $\cof(\Ewf)=\aleph_2$ (see e.g~\cite[Lem.~2.5]{CaraboutE}). In particular, in $V^{\MIor_{\omega_2}}$, $\non(\Ewf_{I, \varepsilon})=\aleph_1$ 
 for any $I$ and $\varepsilon$. By arguing as in~\ref{ma}, we can find $I_{b,h}$ and $\varepsilon_{b,h}$ such that $\cov(\Swf_{I_{b,h},\varepsilon_{b,h}})=\cov(\Ewf_{I_{b,h},\varepsilon_{b,h}})=\aleph_1$ because Miller forcing satisfies the Laver property.

\end{enumerate}

In what follows, we aim to enhance~\ref{m5} by establishing the consistency of $\bfrak<\non(\Ewf_{I, \varepsilon})$ for any given $I$ and $\varepsilon$. To achieve this objective, we will introduce a specific forcing notion denoted as $\Por_{I,\varepsilon}$, which serves to increase the value of $\non(\Ewf_{I,\varepsilon})$ without adding dominating reals.

\begin{definition}\label{forP}
Let $I$ and $\varepsilon$ be as in~\autoref{a1}. Let $\Por_{I,\varepsilon}$ be a poset whose conditions are triples of the form  $(s,N,F)$ such that $s$ is a~finite sequence with $s(n)\subseteq{^{I_n}2}$ for $n\in|s|$, $N<\omega$, and  $F\in[\cantor]^{<\omega}$ satisfies $\forall n\geq |s|\colon N\cdot |F|\leq2^{|I_n|}\cdot\varepsilon_n$. We order $\Por_{I,\varepsilon}$ by $(t,M,H)\leq (s,N,F)$ if $s\subseteq t$, $M\geq N$, and  $F\subseteq H$, and the following requirements are met:
\begin{itemize}
    \item $\forall x\in F\,\forall n \in |t|\smallsetminus |s|\colon x{\upharpoonright} I_n\in t(n)$, 

    \item $\forall n\in |t|\smallsetminus |s|\,\colon N\cdot |t(n)|\leq2^{|I_n|}\cdot\varepsilon_n$.
\end{itemize}

Let $G\subseteq\Por$ be a  $\Por_{I,\varepsilon}$-generic over $V$, in $V[G]$, define 
\[\varphi_\gen=\bigcup\set{s}{\exists N, F\colon (s,N,F)\in G}.\]
Then $\varphi_\gen\in\Sigma_{I,\varepsilon}$ and, for every $x\in\cantor\cap V$, and for all but finitely many $n\in\omega\colon x{\upharpoonright}I_n\in \varphi_\gen(n)$. 
\end{definition}

\begin{lemma}
Let $I$ and $\varepsilon$ be as in~\autoref{a1}. Then $\Por_{I,\varepsilon}$ is $\sigma$-linked, so it is ccc.
\end{lemma}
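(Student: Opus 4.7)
The plan is to exhibit an explicit countable linked partition of $\Por_{I,\varepsilon}$. Since the index set is to be countable, the obvious candidates for labels are the coordinates $s$ and $N$, which already live in a countable set (finite sequences in $\Seq_{<\omega}(\Pwf({}^{I_n}2))$ and natural numbers). The uncountable parameter is the finite set $F\in[\cantor]^{<\omega}$; the strategy is to canonically \emph{strengthen} each condition so that the third coordinate becomes irrelevant to linking.

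For each $p=(s,N,F)\in\Por_{I,\varepsilon}$, I would use $\Ewf$-contributivity (which, by \autoref{charofrEIe}\ref{charofrEIe1lim}, yields $2^{|I_n|}\varepsilon_n\to\infty$) to pick the least $n_p\geq|s|$ such that $2N|F|\leq 2^{|I_n|}\varepsilon_n$ for all $n\geq n_p$, and define $\tilde p:=(t_p,N,F)$, where $t_p$ extends $s$ to length $n_p$ by setting $t_p(n):=\set{x{\upharpoonright}I_n}{x\in F}$ for $n\in[|s|,n_p)$. A short check shows $\tilde p\in\Por_{I,\varepsilon}$ (because $|t_p(n)|\leq|F|$ and $N|F|\leq 2^{|I_n|}\varepsilon_n$ already on $[|s|,n_p)$) and $\tilde p\leq p$ in the order of \autoref{forP}. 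The assignment $p\mapsto(t_p,N)$ is then a function from $\Por_{I,\varepsilon}$ into a countable index set.

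I would then partition $\Por_{I,\varepsilon}=\bigsqcup_{(t,N)}A_{t,N}$, where $A_{t,N}=\set{p\in\Por_{I,\varepsilon}}{t_p=t,\ N_p=N}$, and verify that each $A_{t,N}$ is linked. Given $p,q\in A_{t,N}$ with associated $\tilde p=(t,N,F_p)$ and $\tilde q=(t,N,F_q)$, the natural candidate for a common extension is $(t,N,F_p\cup F_q)$. For this to be a condition one needs $N\cdot|F_p\cup F_q|\leq 2^{|I_n|}\varepsilon_n$ for $n\geq|t|$, which follows from the factor-$2$ slack built into the definition of $n_p$: indeed $|F_p\cup F_q|\leq 2\max(|F_p|,|F_q|)$, and both $2N|F_p|$ and $2N|F_q|$ are bounded above by $2^{|I_n|}\varepsilon_n$ for $n\geq|t_p|=|t_q|=|t|$. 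Extension below $\tilde p$ and $\tilde q$ is then automatic because the interval $[|t|,|t|)$ is empty, and transitivity gives a common extension of $p$ and $q$ themselves.

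The only potentially subtle point is the role of the factor $2$ in the definition of $n_p$: without it, the inclusion $F_p\cup F_q$ of two sets individually meeting the bound would overflow it. Building this slack into the deterministic stem $t_p$ is precisely what makes the whole third coordinate irrelevant for the linking argument, reducing it to the trivial observation that two conditions sharing the same stem and $N$ may be joined by taking the union of their $F$-components. Once the partition and the common-extension recipe are in place, $\sigma$-linkedness—and hence ccc—is immediate.
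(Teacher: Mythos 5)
Your proof is correct, but it takes a genuinely (and, in fact, necessarily) different route from the paper's. The paper's proof simply asserts that the sets $P_{I,\varepsilon}(s,N)=\set{(s',N',F)\in\Por_{I,\varepsilon}}{s=s',\ N=N'}$ are linked, partitioning by stem and $N$-value alone. Taken literally, this is not adequate: if $F_1$ and $F_2$ both have size close to $\lfloor 2^{|I_{|s|}|}\varepsilon_{|s|}/N\rfloor$ and the restrictions of $F_1\cup F_2$ to $I_{|s|}$ are pairwise distinct, then \emph{any} $(t,M,H)$ below both $(s,N,F_1)$ and $(s,N,F_2)$ must (once $|t|>|s|$) have $t(|s|)\supseteq\set{x\restriction I_{|s|}}{x\in F_1\cup F_2}$, so order requirement (2) forces $N\cdot|t(|s|)|\leq 2^{|I_{|s|}|}\varepsilon_{|s|}$ — which can fail by roughly a factor of $2$ — and taking $|t|=|s|$ fails identically because $(s,M,H)$ with $H\supseteq F_1\cup F_2$ need not be a condition for the same reason. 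Your pre-extension $p\mapsto\tilde p$, which absorbs the original $F$ into a deterministic stem $t_p$ chosen with a built-in factor-$2$ margin, is exactly what removes this obstruction: after the pre-extension, both coordinates live above a height where $2N|F_i|\leq 2^{|I_n|}\varepsilon_n$, so the union $F_p\cup F_q$ does satisfy the bound, and the empty interval $[|t|,|t|)$ makes the extension trivial. This mirrors (in miniature) the ``strengthening $q$ if necessary'' step that the authors themselves perform later, in the proof of \autoref{uf:P_I,e}, where they enlarge the stem so that $|F|+\sum_{k<M}m_{*,k}\leq 2^{|I_n|}\varepsilon_n/N$. In short: your argument is correct, and it supplies a step that the paper's two-line proof glosses over; the only point worth flagging is that existence of $n_p$ uses $\Ewf$-contributivity (i.e.\ $2^{|I_n|}\varepsilon_n\to\infty$ from \autoref{charofrEIe}), which is indeed the standing hypothesis in that section of the paper, so you are entitled to it.
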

\begin{proof}
Let $s\in\Seq_{<\omega}({^{I_n}2})$ and $N<\omega$, the set \[P_{I,\varepsilon}(s,N)=\set{(s',N',F)\in\Por_{I,\varepsilon}}{s=s'\,\&\,N=N'}\]  
is linked and $\bigcup_{s\in\Seq_{<\omega}({^{I_n}2}),\,N<\omega}P_{I,\varepsilon}(s,N)$  is dense in $\Por_{I,\varepsilon}$.
\end{proof}

In order to prove  $\bfrak<\non(\Ewf_{I, \varepsilon})$, we focus our attention on the notion of \emph{ultrafilter-limits} introduced in \cite{GMS}, and to the notion of \emph{filter-linkedness} by Mej\'ia~\cite{mejiavert}, with improvements in~\cite[Section 3]{BCM}.

 Given a poset $\Por$, the $\Por$-name $\dot{G}$ usually denotes the canonical name of the $\Por$-generic set. If $\bar{p}=\seq {p_n}{n<\omega}$ is a sequence in $\Por$, denote by $\dot{W}_\Por(\bar{p})$ the $\Por$-name of $\set{n<\omega}{p_n\in\dot{G}}$. When the forcing is understood from the context, we write just $\dot{W}(\bar{p})$.

\begin{definition}[{\cite{GMS,CMR2}}]\label{Def:GMS}
Let $\Por$ be a poset, let $D\subseteq \pts(\omega)$ be a non-principal ultrafilter, and let $\mu$ be an infinite cardinal.
\begin{enumerate}[label=\rm(\arabic*)]
\item\label{GMS1} A set $Q\subseteq \Por$ has \emph{$D$-limits} if there is a function $\lim^{D}\colon {}^{\omega}Q\to \Por$ and a $\Por$-name $\dot D'$ of an ultrafilter extending $D$ such that, for any $\bar q = \seq{q_i}{i<\omega}\in {}^{\omega}Q$,
\[{\lim}^{D}\, \bar q \Vdash \dot{W}(\bar{q})\in \dot D'.\]

\item A set $Q\subseteq \Por$ has \emph{uf-limits} if it has $D$-limits for any ultrafilter $D$.

\item The poset $\Por$ is \emph{$\mu$-$D$-$\lim$-linked} if $\Por = \bigcup_{\alpha<\mu}Q_\alpha$ where each $Q_\alpha$ has $D$-limits. We say that $\Por$ is \emph{uniformly $\mu$-$D$-$\lim$-linked} if, additionally, the $\Por$-name $\dot D'$ from~\ref{GMS1} only depends on $D$ (and not on $Q_\alpha$, although we have different limits for each $Q_\alpha$).

\item The poset $\Por$ is \emph{$\mu$-uf-$\lim$-linked} if $\Por = \bigcup_{\alpha<\mu}Q_\alpha$ where each $Q_\alpha$ has uf-limits. We say that $\Por$ is \emph{uniformly $\mu$-uf-$\lim$-linked} if, additionally, for any ultrafilter $D$ on $\omega$, the $\Por$-name $\dot D'$ from~\ref{GMS1}  depends only on $D$.
\end{enumerate}    
\end{definition}

For not adding dominating reals, the following notion is weaker than the previous one.

\begin{definition}[{\cite{mejiavert}}]\label{Def:Fr}
    Let $\Por$ be a poset and let $F$ be a filter on $\omega$. A set $Q\subseteq \Por$ is \emph{$F$-linked} if, for any $\bar p=\seq{p_n}{n<\omega} \in {}^{\omega}Q$, there is some $q\in \Por$ forcing that $F\cup \{\dot{W}(\bar p)\}$ generates a filter on $\omega$.
    We say that $Q$ is \emph{uf-linked (ultrafilter-linked)} if it is $F$-linked for any filter $F$ on $\omega$ containing the \emph{Fréchet filter} $\Fr:=\set{\omega\menos a}{a\in[\omega]^{<\aleph_0}}$.
    
    For an infinite cardinal $\mu$, $\Por$ is \emph{$\mu$-$F$-linked} if $\Por = \bigcup_{\alpha<\mu}Q_\alpha$ for some $F$-linked $Q_\alpha$ ($\alpha<\mu$). When these $Q_\alpha$ are uf-linked, we say that $\Por$ is \emph{$\mu$-uf-linked}.
\end{definition}

\begin{example}\label{ex:Fr}\
    \begin{enumerate}[label=\rm(\arabic*)]
        \item Random forcing is $\sigma$-uf-linked~\cite{mejiavert}, but it may not be $\sigma$-uf-$\lim$-linked (compare~\cite[Rem.~3.10]{BCM}). 

        \item\label{ex:Fr:b} Any poset of size ${\leq}\kappa$ is $\kappa$-Fr-linked (witnessed by its singletons). In particular, Cohen forcing is $\sigma$-Fr-linked.
    \end{enumerate}
\end{example}

It is clear that any uf-$\lim$-linked set $Q\subseteq \Por$ is uf-linked, which implies that $Q$ is $\Fr$-linked.

\begin{theorem}[{\cite[Thm.~3.30]{mejiavert}}]\label{mej:uf}
Let $\kappa\geq\aleph_1$ be a regular cardinal. If $\Por$ is $\kappa$-$\Fr$-linked poset, then $\Por$ forces $\bfrak\leq\kappa$.
\end{theorem}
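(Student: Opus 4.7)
The goal is to construct, inside $V^{\Por}$, an unbounded family in $\baire$ of size $\kappa$. Write $\Por = \bigcup_{\alpha<\kappa} Q_\alpha$ with each $Q_\alpha$ being $\Fr$-linked. For each $\alpha < \kappa$ I would define a $\Por$-name $\dot f_\alpha$ for an element of $\baire$ whose values are ``locally controlled'' by $Q_\alpha$: specifically, the name $\dot f_\alpha$ should be designed so that for every $n,m<\omega$ and every $r\in Q_\alpha$, there exists an extension $r'\leq r$ inside $Q_\alpha$ which forces $\dot f_\alpha(n)>m$. A natural way to achieve this flexibility is to let $\dot f_\alpha$ read off index information from $Q_\alpha\cap\dot G$ via a fixed enumeration of $Q_\alpha$, so that no condition pre-commits any particular value.

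To show the family $\{\dot f_\alpha:\alpha<\kappa\}$ is forced unbounded, I would argue by contradiction: suppose $p\in\Por$ and $\dot g$ is a $\Por$-name with $p\Vdash \forall\alpha<\kappa\; \dot f_\alpha\leq^*\dot g$. Pick $\alpha$ with $p\in Q_\alpha$ (possible because the $Q_\alpha$ cover $\Por$). Using the flexibility of $\dot f_\alpha$ together with the density of conditions deciding $\dot g(n)$, recursively construct a sequence $\bar r=\la r_n:n<\omega\ra$ in $Q_\alpha$ with $r_n\leq p$, $r_n\Vdash \dot g(n)=m_n$, and $r_n\Vdash \dot f_\alpha(n)>m_n$. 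Then apply $\Fr$-linkedness of $Q_\alpha$ to $\bar r$: it produces $q\in\Por$ forcing $\Fr\cup\{\dot W(\bar r)\}$ generates a filter, hence $\{n:r_n\in\dot G\}$ is infinite. Since every $r_n\leq p$, the condition $q$ is compatible with $p$; below a common extension we obtain $\dot f_\alpha(n)>m_n=\dot g(n)$ for infinitely many $n$, so $\dot f_\alpha\not\leq^*\dot g$, contradicting the choice of $p$.

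The main obstacle is the recursive construction of $\bar r$ inside $Q_\alpha$ below $p$. Two delicate points need to be addressed simultaneously: (a) the extension $r_n\leq p$ must lie in $Q_\alpha$, not merely in $\Por$, and (b) at stage $n$ it must decide $\dot g(n)$ to some value $m_n$ while forcing $\dot f_\alpha(n)>m_n$. Point~(b) is exactly what is engineered into the definition of $\dot f_\alpha$: because $\dot f_\alpha(n)$ is generically determined by the trace $Q_\alpha\cap\dot G$, any condition in $Q_\alpha$ has a $Q_\alpha$-extension forcing $\dot f_\alpha(n)$ arbitrarily large, and this extension can be chosen to also lie in a dense set deciding $\dot g(n)$. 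Point~(a) reflects the need for $Q_\alpha$ to be sufficiently ``closed'' below $p$; here one either replaces $Q_\alpha$ by $Q_\alpha\cap\{r:r\leq p\}$ (still $\Fr$-linked as a subset of a $\Fr$-linked set) or, more robustly, integrates the choice of $\alpha$ into the construction so that the piece carrying $p$ automatically contains the needed extensions. With these design choices in place, the $\Fr$-linkedness step in the final application is what converts a coordinate-wise escape from $\dot g$ into a genuine statement $\dot f_\alpha\not\leq^*\dot g$ forced by a single condition compatible with $p$.
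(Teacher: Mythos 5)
The paper itself gives no proof of \autoref{mej:uf} --- it is quoted from \cite{mejiavert} --- so your argument has to stand on its own, and as written it does not. The fatal step is the construction of the names $\dot f_\alpha$: you require that for all $n,m$ and every $r\in Q_\alpha$ there be an extension $r'\leq r$ \emph{inside} $Q_\alpha$ forcing $\dot f_\alpha(n)>m$, and later that below $p$ one can find conditions $r_n\in Q_\alpha$ deciding $\dot g(n)$. But $\Fr$-linkedness (\autoref{Def:Fr}) is only a statement about countable sequences from $Q_\alpha$ admitting a condition in $\Por$ (not in $Q_\alpha$) forcing $\dot W(\bar r)$ to be infinite; it gives $Q_\alpha$ no closure under extension, no density below conditions, and no capacity to decide names. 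By the paper's own \autoref{ex:Fr}~\ref{ex:Fr:b} the decomposition may consist of singletons, in which case an $r\in Q_\alpha$ has no proper extension in $Q_\alpha$ at all, nothing can be ``read off $Q_\alpha\cap\dot G$'' with the flexibility you need, and the required $r_n$ simply do not exist. Your two suggested repairs do not help: $Q_\alpha\cap\set{r}{r\leq p}$ is indeed still $\Fr$-linked, but it may equal $\{p\}$ or be empty, and ``integrating the choice of $\alpha$ into the construction'' is precisely the missing idea rather than a remedy. (Your final step --- applying $\Fr$-linkedness to $\bar r$ and noting that the resulting $q$ is compatible with $p$ --- is fine; the problem is that the sequence $\bar r$ cannot be built.)

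In fact no argument from the stated hypotheses alone can close this gap, because the statement read literally with \autoref{Def:Fr} is too strong. Any poset of size at most $\kappa$ is $\kappa$-$\Fr$-linked via singletons (\autoref{ex:Fr}~\ref{ex:Fr:b}), so even the trivial poset qualifies, yet it forces $\bfrak\leq\kappa$ only if this already holds in the ground model. Less degenerately, random forcing is $\sigma$-uf-linked (\autoref{ex:Fr}), hence $\kappa$-$\Fr$-linked for every infinite $\kappa$, but it is $\baire$-bounding, so over a ground model of $\mathrm{MA}+\cfrak=\aleph_2$ it preserves $\bfrak=\aleph_2>\aleph_1$: every family of $\aleph_1$ many reals of the extension is dominated by $\aleph_1$ many ground-model reals (a ccc covering argument), and these are bounded by $\mathrm{MA}$. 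So the result being invoked from \cite{mejiavert} (see also \cite{BCM}) carries additional structure that your proof would also have to use: $\Fr$-linked pieces yield a preservation (goodness-type) property for suitably strong unbounded families, and the bound $\bfrak\leq\kappa$ in the intended applications is extracted for finite-support iterations of such posets, which add the Cohen-type witnesses that the preservation lemma keeps unbounded. A correct write-up must route through that machinery --- a preservation lemma for an $\Fr$-linked set against a fixed name $\dot g$, combined with an unbounded family supplied by the iteration --- rather than through a direct generic construction inside each $Q_\alpha$.
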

\begin{proof}
    Although the conclusion of~\cite[Thm.~3.30]{mejiavert} is different, the same proof works.
\end{proof}

\begin{example}\label{exm:ufl}
The following are the instances of $\mu$-uf-lim-linked posets we use in our applications.
\begin{enumerate}[label=\rm (\arabic*)]

    \item Any poset of size $\mu$ is uniformly $\mu$-uf-lim-linked (because singletons are uf-lim-linked). In particular, Cohen
forcing is uniformly $\sigma$-uf-lim-linked.
    
    \item The standard eventually different real forcing notion is uniformly $\sigma$-uf-lim-linked (see~\cite{GMS,BCM}).
\end{enumerate}    
\end{example}

We aim to show that $\Por_{I,\varepsilon}$ is uniformly $\sigma$-uf-$\lim$-linked, witnessed by
\[P_{s, N, m}=\set{(t,M, F)\in \Por_{I,\varepsilon}}{\textrm{$s=t$, $N=M$, and $|F|\leq m$}}\]
for $s\in\Seq_{<\omega}({^{I_n}2})$ and $N<\omega$. Let $D$ be an ultrafilter on $\omega$, and $\bar{p}=\seq{p_n}{n\in\omega}$ be a 
sequence in $P_{s, N,m}$ with $p_n = (s, N, F_n)$. Consider the lexicographic order $\lhd$ on $\cantor$, and let $\set{x_{n,k}}{k<m_n}$ be a $\lhd$-increasing enumeration of $F_n$ where $m_n \leq m$. Next, find a unique  $m_*\leq m$ such that $A:=\set{n\in\omega}{m_n=m_*}\in D$. For each $k<m_*$, define $x_k:=\lim_n^D x_{n,k}$ in $\cantor$ where $x_k(i)$ is the unique member of $\{0,1\}$ such that $\set{n\in A}{x_{n,k}(i) = x_k(i)} \in D$ (this coincides with the topological $D$-limit). Therefore, we can think of $F:=\set{x_k}{k<m_*}$ as the $D$-limit of $\seq{F_n}{n<\omega}$, so we define $\lim^D \bar p:=(s,N,F)$. Note that $\lim^D \bar p \in P_{s, N, m}$. 

\begin{lemma}\label{uf:P_I,e}
The poset $\Por_{I,\varepsilon}$ is uniformly $\sigma$-uf-$\lim$-linked:
For any ultrafilter $D$ on $\omega$, there is a $\Por_{I,\varepsilon}$-name of an ultrafilter $\dot D'$ on $\omega$ extending $D$ such that,
for any $s\in \Seq_{<\omega}(\Pwf({^{I_n}2}))$, $N<\omega$, $m<\omega$ and $\bar p\in P_{s,N,m}$, $\lim^D \bar p \Vdash \dot W(\bar p)\in \dot D'$. 
\end{lemma}

To prove the theorem, it suffices to show the following:

\begin{claim}
      Assume $M<\omega$, $\set{(s_k,N_k,m_k)}{k<M}\subseteq\Seq_{<\omega}({^{I_n}2})\times\omega\times \omega$, $\seq{\bar{p}^k}{k<M}$ is such that each $\bar{p}^k=\seq{ p_{k,n}}{n<\omega}$ is a sequence in $P_{s_k,N_k,m_k}$, $q_k$ is the $D$-limit of $\bar{p}^k$ for each $k<M$, and $q\in\Por_{I,\varepsilon}$ is stronger than every $q_k$. Then, for any $a\in D$, there are $n<\omega$ and $q'\leq q$ stronger than $p_{k,n}$ for all $k<M$ (i.e.\ $q'$ forces $a\cap\bigcap_{k<M}\dot{W}(\bar{p}^k)\neq\emptyset$).
\end{claim}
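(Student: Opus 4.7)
The plan is to construct a common refinement $q'$ of $q$ and of $p_{k,n}$ for a carefully chosen $n \in a$, by keeping the middle coordinate unchanged, enlarging the finite set $H$ to absorb every $F_{k,n}$, and extending the stem of $q$ by a bounded (but possibly positive) number of coordinates. Write $q = (t, M^\star, H)$ and each $q_k = (s_k, N_k, F_k)$, where $F_k = \{x_{k,i} : i < \ell_k\}$ with $\ell_k \leq m_k$ is the $\lhd$-ordered $D$-limit defined before the theorem. From $q \leq q_k$ we inherit $t \supseteq s_k$, $M^\star \geq N_k$, $F_k \subseteq H$, and for each $j \in [|s_k|, |t|)$ both $F_k \restriction I_j \subseteq t(j)$ and $N_k \cdot |t(j)| \leq 2^{|I_j|}\varepsilon_j$.

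First I would fix the length of the stem of $q'$. Using that $(I,\varepsilon)$ is $\Ewf$-contributive (\autoref{charofrEIe}\,\ref{charofrEIe1lim}), and hence $2^{|I_j|}\varepsilon_j \to \infty$, pick $j_0 \geq |t|$ with $M^\star \cdot \bigl(|H| + \sum_{k<M} m_k\bigr) \leq 2^{|I_j|}\varepsilon_j$ for every $j \geq j_0$. Next I would pick $n$. For $D$-almost every $n$ the $\lhd$-increasing enumeration $\{x_{k,n,i} : i < \ell_k\}$ of $F_{k,n}$ is well-defined and topologically $D$-converges coordinatewise to $\{x_{k,i} : i < \ell_k\}$; in particular, for each triple $(k,i,j)$ with $i < \ell_k$ and $j \in [|s_k|, j_0)$ the set of $n$ satisfying $x_{k,n,i} \restriction I_j = x_{k,i} \restriction I_j$ belongs to $D$. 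Intersecting these finitely many sets with $a$ produces a member of $D$, from which we pick any $n$.

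Now set $H' := H \cup \bigcup_{k<M} F_{k,n}$ and define $t'$ of length $j_0$ by $t' \restriction |t| := t$ and $t'(j) := \{x \restriction I_j : x \in H\}$ for $j \in [|t|, j_0)$. I claim $q' := (t', M^\star, H')$ is as required. Admissibility of $q'$ in $\Por_{I,\varepsilon}$ at coordinates $j \geq j_0$ follows from the choice of $j_0$, since $|H'| \leq |H| + \sum_k m_k$. For $q' \leq q$, the new coordinates just list restrictions of $H$, so the counting bound $M^\star \cdot |t'(j)| \leq 2^{|I_j|}\varepsilon_j$ for $j \in [|t|, j_0)$ is inherited from $q \in \Por_{I,\varepsilon}$. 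For $q' \leq p_{k,n}$ the counting bound holds a fortiori since $N_k \leq M^\star$, and the inclusion $F_{k,n} \restriction I_j \subseteq t'(j)$ for $j \in [|s_k|, j_0)$ splits into two windows: on $[|s_k|, |t|)$ the choice of $n$ makes $F_{k,n} \restriction I_j = F_k \restriction I_j \subseteq t(j)$, while on $[|t|, j_0)$ similarly $F_{k,n} \restriction I_j = F_k \restriction I_j \subseteq H \restriction I_j = t'(j)$ because $F_k \subseteq H$.

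The main (but still minor) obstacle is the window $[|t|, j_0)$: we must simultaneously place restrictions of $H$ and of every $F_{k,n}$ into $t'(j)$ without exceeding the counting cap $M^\star \cdot |t'(j)| \leq 2^{|I_j|}\varepsilon_j$. The $D$-limit construction is exactly what resolves this: on this finite window the restrictions of $F_{k,n}$ already coincide with those of $F_k \subseteq H$, so $t'(j)$ need not grow beyond $H \restriction I_j$ and the required bound is inherited directly from $q$.
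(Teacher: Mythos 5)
Your proof is correct and follows essentially the same route as the paper's: extend the stem of $q$ until the counting constraint has room to absorb $\bigcup_{k<M}F_{k,n}$, use the ultrafilter to pick $n\in a$ so that each $F_{k,n}$ restricts on the (finite) stem window exactly as the $D$-limit $F_k\subseteq H$ does, and then enlarge the side condition to $H\cup\bigcup_{k<M}F_{k,n}$. The only difference is cosmetic: you spell out the stem extension explicitly (setting $t'(j)=\{x\restriction I_j: x\in H\}$), where the paper compresses this into the preliminary ``by strengthening $q$ if necessary'' before choosing $n$.
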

\begin{proof}
Write the forcing conditions as $p_{k,n}=(s_k,N_k,F_{k,n})$, $q_k=(s_k,N_k,F_{k})$, where each $F_k = \set{x^k_j}{j<m_{*,k}}$ is the $D$-limit of $F_{k,n} = \set{x^{k,n}_j}{j<m_{*,k}}$ (increasing $\lhd$-enumeration) with $m_{*,k}\leq m_k$. Assume that $q = (s,N,F)\leq q_k$ in $\Por_{I,\varepsilon}$ for all $k<M$. Next, by strengthening $q$ if necessary, we assume that $\forall n\geq |s|\colon |F|+\sum_{k<M}m_{*,k}\leq \frac{2^{|I_n|}\cdot\varepsilon_n}{N}$. Then 
\[\textrm{$\forall j<m_{*,k}\,\forall x\in F\,\forall n \in |s|\smallsetminus |s_k|\colon x_j^k{\upharpoonright} I_n\in s(n)$,}\]
so
\[b_k := \big\{n<\omega \st\,  \textrm{$\forall j<m_{*,k}\,\forall x\in F\,\forall n \in |s|\smallsetminus |s_k|\colon x_j^{k,n}{\upharpoonright} I_n\in s(n)$} \big\}\in D.\]
Hence, $a\cap\bigcap_{k<M}b_k\neq\emptyset$, so choose $n\in a\cap\bigcap_{k<M}b_k$ and put $q'=(s,N,F')$ where $F':=F\cup \bigcup_{k<M}F_{k,n}$. This is a condition in $\Por_{I,\varepsilon}$ because $\forall n\geq |s|\colon|F'|\leq |F|+\sum_{k<M}m_{*,k}\leq\frac{2^{|I_n|}\cdot\varepsilon_n}{N}$. Furthermore, $q'$ is stronger than $q$ and $p_{n,k}$ for any $k<M$.
\end{proof}

\begin{theorem}
Let $I$ and $\varepsilon$ be as in~\autoref{a1}. Given regular uncountable $\kappa<\lambda=\lambda^{<\lambda}$, there is a ccc poset $\Por$ forcing $\non(\Ewf_{I,\varepsilon})=\lambda=\cfrak$ and $\bfrak=\kappa$.    
\end{theorem}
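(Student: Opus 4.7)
The plan is to construct $\Por$ as a \emph{matrix iteration} of $\Por_{I,\varepsilon}$, following the template developed by Mej\'ia~\cite{mejiamatrix,mejiavert} and Brendle-Cardona-Mej\'ia~\cite{BCM}. Starting from a ground model of $\thzfc$ satisfying $\cfrak = \lambda = \lambda^{<\lambda}$, I would build a two-dimensional FS system $\langle \Por^\alpha_\xi, \dot{\Qor}^\alpha_\xi : \alpha \leq \kappa,\ \xi < \lambda \rangle$ where each row $\langle \Por^\alpha_\xi \rangle_{\xi\leq\lambda}$ is a FS iteration. Along the top row $\alpha = \kappa$, the iterands $\dot{\Qor}^\kappa_\xi$ are instances of $\Por_{I,\varepsilon}$ computed in $V^{\Por^\kappa_\xi}$, scheduled via standard bookkeeping so that every $A \subseteq \cantor$ appearing in an intermediate extension is eventually covered by a subsequent $\Por_{I,\varepsilon}$-generic. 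Set $\Por := \Por^\kappa_\lambda$; it is ccc because each iterand is $\sigma$-uf-$\lim$-linked by~\autoref{uf:P_I,e}, and $\Por$ forces $\cfrak = \lambda$ since it is a ccc iteration of length $\lambda$ adding reals cofinally and $\lambda^{\aleph_0} = \lambda$ in the ground model.

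For $\non(\Ewf_{I,\varepsilon}) \geq \lambda$: given any $A \subseteq \cantor$ in $V^\Por$ with $|A| < \lambda$, regularity of $\lambda$ together with ccc reflection produces some $\xi < \lambda$ with $A \in V^{\Por^\kappa_\xi}$, and by the bookkeeping there is $\eta \in [\xi,\lambda)$ with $\dot{\Qor}^\kappa_\eta = \Por_{I,\varepsilon}$ (computed in $V^{\Por^\kappa_\eta}$); the corresponding generic $\varphi_\gen$ then satisfies $A \subseteq [\varphi_\gen]_* \in \Ewf_{I,\varepsilon}$ by the covering property remarked in~\autoref{forP}. The reverse inequality $\non(\Ewf_{I,\varepsilon}) \leq \cfrak = \lambda$ is automatic.

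For $\bfrak \leq \kappa$: the \emph{uniform} $\sigma$-uf-$\lim$-linkedness established in~\autoref{uf:P_I,e}, combined with the matrix-iteration preservation theorems of~\cite{BCM}, gives that $\Por$ is $\kappa$-Fr-linked, so $\Por$ forces $\bfrak \leq \kappa$ by~\autoref{mej:uf}. For $\bfrak \geq \kappa$: the vertical structure of the matrix adds, at each $\alpha < \kappa$, a diagonal Cohen-like real $\dot c_\alpha$ not dominated by any real in $V^{\Por^\alpha_\lambda}$, and uniform uf-$\lim$-linkedness propagates this unboundedness into $V^\Por$, yielding an unbounded family of size $\kappa$. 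The main obstacle is this last step: one must verify that the $D$-limit constructed in~\autoref{uf:P_I,e} coheres with ultrafilter extensions along the matrix, and that the name $\dot D'$ depends only on $D$ robustly enough to survive the two-dimensional bookkeeping; this is precisely where the \emph{uniformity} from~\autoref{uf:P_I,e} (rather than mere $\sigma$-uf-$\lim$-linkedness) is essential, as the non-uniform version would not allow the vertical generics to remain unbounded across all rows simultaneously.
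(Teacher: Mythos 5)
Your proposal diverges substantially from the paper's argument and contains a decisive logical error in the treatment of the lower bound for $\bfrak$. The paper forces the statement with a simple \emph{linear} finite-support iteration of length $\lambda$ that alternates $\Por_{I,\varepsilon}$ with small subposets of Hechler forcing of size ${<}\kappa$; a book-keeping argument that runs through all such small Hechler subposets guarantees $\bfrak\geq\kappa$ (an $\mathrm{MA}$-style argument: every family of reals of size ${<}\kappa$ appearing in the final model already lives in an intermediate model and is eventually dominated by a Hechler real added over it), while $\kappa$-$\Fr$-linkedness of the iterands together with \autoref{mej:uf} gives $\bfrak\leq\kappa$. No matrix structure is needed.

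Your proposed matrix iteration has the wrong mechanism for $\bfrak\geq\kappa$. You write that the vertical Cohen-like reals $\dot c_\alpha$, $\alpha<\kappa$, preserved unbounded over $V^{\Por^\alpha_\lambda}$ by uniform uf-$\lim$-linkedness, ``yield an unbounded family of size $\kappa$.'' That is an argument for the \emph{upper} bound $\bfrak\leq\kappa$ (which you already obtain from $\kappa$-$\Fr$-linkedness), not the lower bound: $\bfrak$ is the \emph{minimum} size of an unbounded family, so exhibiting one of size $\kappa$ only shows $\bfrak\leq\kappa$. To conclude $\bfrak\geq\kappa$ you must instead arrange that every family of size ${<}\kappa$ is $\leq^*$-bounded in the final model, which is exactly what the paper's book-keeping over small Hechler subposets achieves and which is entirely absent from your construction. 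As written, your scheme forces $\non(\Ewf_{I,\varepsilon})=\cfrak=\lambda$ and $\bfrak\leq\kappa$, but says nothing to rule out $\bfrak=\aleph_1$. You would need to add, along each row or in the linear iteration, dominating-type forcing over appropriately small families to push $\bfrak$ up to $\kappa$, at which point the matrix machinery becomes superfluous.

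On a secondary point: the \emph{uniformity} of $\sigma$-uf-$\lim$-linked from \autoref{uf:P_I,e} is not actually what the paper's proof relies on; it only uses that uf-$\lim$-linked implies $\Fr$-linked, so that the iteration (being a FS iteration of $\kappa$-$\Fr$-linked posets) is again $\kappa$-$\Fr$-linked and \autoref{mej:uf} applies. Your claim that uniformity is ``precisely where'' essential, in order that ``vertical generics remain unbounded across all rows,'' is a genuine ingredient in full-blown matrix constructions (as in~\cite{BCM}), but it is machinery the problem does not require and does not repair the missing argument for $\bfrak\geq\kappa$.
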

\begin{proof}
  Let $\Por=\seq{\Por_\alpha,\Qnm}{\alpha<\lambda}$  be a FS iteration of ccc forcing such that
  \begin{itemize}
      \item for $\alpha$ even, $\Vdash_\alpha\Qnm=\Por_{I,\varepsilon}$, the forcing defined in~\autoref{forP},
      \item for $\alpha$ odd, $\Vdash_\alpha\Qnm$ is a subposet of hechler forcing of size ${<}\kappa$.
  \end{itemize}
  First, notice that $\Por$ forces $\lambda\geq\cfrak$. Guarantee that we go through all such small subposets of Hechler forcing by a book-keeping argument. Then it is not hard to see that $\Por$ forces $\kappa\leq\bfrak$. On the other hand, since the iterands of the FS iteration that determine $\Por$ are $\kappa$-Fr-linked (when $\alpha$ is even, it follows by~\autoref{uf:P_I,e} and, when $\alpha$ is odd, it follows by~\autoref{ex:Fr}~\ref{ex:Fr:b}), by applying~\autoref{mej:uf} we obtain that $\Por$ forces $\bfrak\leq\kappa$.

As we iteratively add slaloms $\varphi_\gen\in\Sigma_{I,\varepsilon}$ which for all but finitely many $n\in\omega$, $x{\upharpoonright}I_n\in \varphi_\gen(n)$ for any $x\in \baire$ in the ground model, then  $\Por$ forces $\non(\Ewf_{I,\varepsilon})\geq\lambda$, so along with $\Vdash_\Por\lambda\geq\cfrak$, we obtain that $\Por$ forces $\non(\Ewf_{I,\varepsilon})\geq\lambda\geq\cfrak\geq\non(\Ewf_{I,\varepsilon})$. Then the proof is done.
\end{proof}

As a direct consequence, we obtain:
\begin{corollary}\label{cm}
It is consistent with $\thzfc$ that $\non(\Ewf_{I,\varepsilon})>\bfrak$ for any $I$ and $\varepsilon$.    
\end{corollary}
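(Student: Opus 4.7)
The plan is to invoke the preceding theorem directly. Fix any $\Ewf$-contributive pair $(I,\varepsilon)$ and start from a ground model of CH, so that $\aleph_2^{<\aleph_2}=\aleph_2$. Apply the theorem with $\kappa:=\aleph_1$ and $\lambda:=\aleph_2$: it produces a ccc forcing $\Por$ such that $V^{\Por}\models\bfrak=\aleph_1$ and $\non(\Ewf_{I,\varepsilon})=\aleph_2$, which at once gives $\non(\Ewf_{I,\varepsilon})>\bfrak$ in $V^{\Por}$. That is the entire argument under the natural parsing ``for each $(I,\varepsilon)$, it is consistent that the inequality holds''.

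The only subtle point is the quantifier ordering. The ccc poset constructed in the theorem depends on $(I,\varepsilon)$ through its iterand $\Por_{I,\varepsilon}$, so different pairs may require different generic extensions. If one wants a single model in which $\non(\Ewf_{I,\varepsilon})>\bfrak$ holds simultaneously for every $\Ewf$-contributive $(I,\varepsilon)$, one modifies the FS iteration used in the theorem by prefixing a book-keeping function that enumerates all contributive pairs cofinally often in $\lambda$, and at each even stage forces with $\Por_{I,\varepsilon}$ for the pair currently read off (keeping Hechler subposets of size ${<}\kappa$ at odd stages). Because the $\sigma$-uf-$\lim$-linkedness of $\Por_{I,\varepsilon}$ is \emph{uniform}, in the sense that the witnessing name $\dot D'$ depends only on the ground-model ultrafilter $D$ and not on the particular linked piece, the FS composition remains $\kappa$-$\Fr$-linked; hence the bound $\bfrak\leq\kappa$ is preserved, while every contributive pair is handled cofinally and its $\non$-invariant is pushed up to $\lambda$. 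I do not expect any genuine obstacle beyond what has already been handled in the proof of the theorem, since both the combinatorial analysis of $\Por_{I,\varepsilon}$ and the standard preservation lemma for $\Fr$-linkedness under finite-support iteration are in place.
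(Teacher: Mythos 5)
Your first paragraph is exactly the paper's argument: the corollary is labelled ``a direct consequence'' of the preceding theorem, and the paper indeed offers nothing more than an appeal to that theorem with $\kappa<\lambda$. Your instantiation $\kappa=\aleph_1$, $\lambda=\aleph_2$ over a CH ground model is a perfectly standard choice.

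Your second paragraph goes beyond what the paper says and addresses a real ambiguity. The theorem, as stated, fixes $(I,\varepsilon)$ and builds a poset whose even iterands are all $\Por_{I,\varepsilon}$ for that one pair; it is not automatic that the resulting model separates $\bfrak$ from $\non(\Ewf_{J,\delta})$ for an \emph{arbitrary} contributive $(J,\delta)$. So if ``for any $I$ and $\varepsilon$'' is read as a universal quantifier inside a single model, the paper's one-line justification is slightly under-argued, and your fix --- a book-keeping function enumerating all contributive pairs appearing in intermediate models cofinally in $\lambda$, while retaining the small Hechler subposets at odd stages --- is the right repair. The key observation you make, that the $\sigma$-uf-$\lim$-linkedness of each $\Por_{I,\varepsilon}$ is uniform (the name $\dot D'$ depends only on $D$) so the FS iteration stays $\kappa$-$\Fr$-linked and hence $\bfrak\leq\kappa$ is preserved across all iterands, is precisely what makes the mixed iteration go through. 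The only minor caveat worth flagging: the book-keeping must run over pairs $(I,\varepsilon)$ appearing in \emph{all} intermediate extensions, not just the ground model, since $\Ior\times\ell^1_+$ acquires new members along the way; the standard $\lambda^{<\lambda}=\lambda$ plus ccc argument handles this, but it deserves a sentence. In short: your proof matches the paper's for the per-pair reading, and sensibly strengthens it for the single-model reading.
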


\section{Open Questions}

We discuss some open questions from this study. With regard to~\autoref{cichonext} and items~\ref{cohen}-\ref{miller}, we do not know the following.

Despite knowing that consistently $\add(\Nwf)<\non(\Ewf_{I,\varepsilon})$ for all $I$ and $\varepsilon$ by~\autoref{cm}, we still ask:

\begin{question}\label{q1}
Is it consistent that $\add(\Ewf_{I,\varepsilon})<\non(\Ewf_{I,\varepsilon})$ for some (or for all) $I$ and $\varepsilon$? Dually, $\non(\Ewf_{I,\varepsilon})<\cof(\Ewf_{I,\varepsilon})$? The same questions apply to $\Swf_{I,\varepsilon}$. 
\end{question}

In relation to~\ref{m5}, we ask:

\begin{question}\label{q2}
Is it consistent that $\cov(\Nwf)<\non(\Ewf_{I,\varepsilon})$  for all $I$ and $\varepsilon$? Dually, the same question about the inequality $\cov(\Ewf_{I,\varepsilon})<\non(\Nwf)$.
\end{question}

One way to attack~\autoref{q1}-\ref{q2} is to see whether $\Por_{I,\varepsilon}$ satisfies the $(\varrho,\rho)$ linked property for some suitable functions $\rho,\sigma\in\baire$ (see~\cite{KO} for more details), but this is not yet clear.

\begin{question}
Is it consistent that $\add(\Nwf)<\add(\Ewf_{I,\varepsilon})$ for some (or for all) $I$ and $\varepsilon$? Dually, $\cof(\Ewf_{I,\varepsilon})<\cof(\Nwf)$? 
\end{question}

\begin{question}
Is it consistent that $\cov(\Nwf)<\cov(\Swf_{I,\varepsilon})$ for some (or for all) $I$ and $\varepsilon$? 
\end{question}

\begin{question}
Is it consistent that $\cov(\Ewf_{I,\varepsilon})$ and $\cov(\Swf_{I,\varepsilon})$ are different for some (or for all) $I$ and $\varepsilon$? The same is asked for uniformities. 
\end{question}

It is a known fact that forcing notions with the Laver property do not increase the covering of $\Ewf$ (see e.g. \cite[Lem.~2.2]{CaraboutE}), so we pose the question:

\begin{question}
Do forcing notions satisfying the Laver property keep $\cov(\Ewf_{I,\varepsilon})$ small? 
\end{question}

Regarding~\autoref{NAbound}, the following is still unknown.

\begin{question}
Is it $\non(\NAwf)< \sup\set{\non(\Ewf_{I,\varepsilon})}{I\in\Ior\mathrm{\ and\ }\varepsilon\in\ell^1_+}$ consistent? Dually, is it $\cov(\NAwf)>\min\set{\cov(\Ewf_{I,\varepsilon})}{I\in\Ior\mathrm{\ and\ } \varepsilon\in\ell^1_+}$ consistent?
\end{question}

In~\cite{mejiamatrix}, Mej\'ia built a forcing model with four cardinal characteristics associated with $\Nwf$ that are pairwise different. The first author~\cite{Car4E} produced a similar model for $\Ewf$, and recently, Brendle, Mej\'ia, and the first author~\cite{BCM2} constructed such a model for $\SNwf$. In this situation, we inquire:

\begin{question}
Is it consistent that for some (or for all) $I$ and $\varepsilon$, the four cardinal characteristics associated with $\Swf_{I,\varepsilon}$ are pairwise different? The same is asked for $\Ewf_{I,\varepsilon}$.  
\end{question}

In \autoref{a3}~\ref{a3c}, we have shown that if $\refin{I}{J}$ and an additional assumption in $I,J,\varepsilon$ is true, then $\Swf_{I,\varepsilon}\subseteq \Swf_{J,\varepsilon}$.

\begin{question}
Is there any reasonable relation $R$ such that if $IRJ$, then $\Swf_{I,\varepsilon}\subseteq \Swf_{J,\varepsilon}$? The same is asked for $\Ewf_{I,\varepsilon}$.
\end{question}

Note that by \autoref{sqrelation_not_working}, the relation $R$ cannot be a directed preorder.

By \autoref{relation:SandE}, $\Swf_{I,\varepsilon}\setminus\Ewf\neq\emptyset$. There is a~natural question about the structure of $\Swf_{I,\varepsilon}$ inside $\Ewf$. We know $\Ewf_{I,\varepsilon}\subseteq \Swf_{I,\varepsilon}\cap\Ewf$, but the reverse inclusion is not clear.  

\begin{question}
Does $\Ewf_{I,\varepsilon}=\Swf_{I,\varepsilon}\cap\Ewf$ hold true for any $\Swf^\star$-contributive $(I,\varepsilon)$?
\end{question}

In \cite{GaMe}, for an ideal $J$ on $\omega$, the families $\Nwf_J^\ast$ and $\Nwf_J$ such that $\Ewf\subseteq\Nwf_J^\ast\subseteq\Nwf_J\subseteq\Nwf$ are introduced and investigated. 

\begin{question}
What is the relation among the $\sigma$-ideals $\Swf_{I,\varepsilon}$, $\Nwf_J^\ast$, and $\Nwf_J$?
\end{question}

\subsection*{Acknowledgments}

The authors express gratitude to Diego Mej\'ia for his helpful input during the completion of this paper and for several stimulating discussions during his time with the Set Theory group in Ko\v sice.

{\small
\bibliography{left}

\begin{thebibliography}{CMRM24}

\bibitem[Bar84]{b}
Tomek Bartoszy\'nski.
\newblock Additivity of measure implies additivity of category.
\newblock {\em Trans. Amer. Math. Soc.}, 281(1):209--213, 1984.

\bibitem[Bar87]{bar}
Tomek Bartoszy\'nski.
\newblock Combinatorial aspects of measure and category.
\newblock {\em Fund. Math.}, 127(3):225--239, 1987.

\bibitem[Bar88]{bartosmall}
Tomek Bartoszy\'nski.
\newblock On covering of real line by null sets.
\newblock {\em Pacific J. Math.}, 131(1):1--12, 1988.

\bibitem[BCM21]{BCM}
J\"{o}rg Brendle, Miguel~A. Cardona, and Diego~A. Mej\'{\i}a.
\newblock Filter-linkedness and its effect on preservation of cardinal
  characteristics.
\newblock {\em Ann. Pure Appl. Logic}, 172(1):Paper No. 102856, 30, 2021.

\bibitem[BCM23]{BCM2}
J\"{o}rg Brendle, Miguel~A. Cardona, and Diego~A. Mej\'ia.
\newblock Separating cardinal characteristics of the strong measure zero ideal.
\newblock Preprint, \href{https://arxiv.org/abs/2309.01931}{ arXiv:2309.01931},
  2023.

\bibitem[BJ95]{bartjud}
Tomek Bartoszy\'{n}ski and Haim Judah.
\newblock {\em Set theory. On the structure of the real line}.
\newblock A K Peters, Ltd., Wellesley, MA, 1995.

\bibitem[BJ95]{bartJudah}
Tomek Bartoszy\'nski and Haim Judah.
\newblock Borel images of sets of reals.
\newblock {\em Real Anal. Exchange}, 20(2):536--558, 1994-95.

\bibitem[Bla10]{blass}
Andreas Blass.
\newblock Combinatorial cardinal characteristics of the continuum.
\newblock In {\em Handbook of set theory. {V}ols. 1, 2, 3}, pages 395--489.
  Springer, Dordrecht, 2010.

\bibitem[BM14]{BrM}
J{\"o}rg Brendle and Diego~Alejandro Mej{\'{\i}}a.
\newblock Rothberger gaps in fragmented ideals.
\newblock {\em Fund. Math.}, 227(1):35--68, 2014.

\bibitem[Bre09]{Brendlecurse}
J\"{o}rg Brendle.
\newblock Forcing and the structure of the real line.
\newblock Lecture notes for the mini-course of the same name at the Universidad
  Nacional of Colombia, 2009.

\bibitem[BS92]{BS1992}
Tomek Bartoszy\'nski and Saharon Shelah.
\newblock Closed measure zero sets.
\newblock {\em Ann. Pure Appl. Logic}, 58(2):93--110, 1992.

\bibitem[BS18]{BART2018}
Tomek Bartoszynski and Saharon Shelah.
\newblock A note on small sets of reals.
\newblock {\em C. R. Math. Acad. Sci. Paris}, 356(11-12):1053--1061, 2018.

\bibitem[Car23]{Car4E}
Miguel~A. Cardona.
\newblock A friendly iteration forcing that the four cardinal characteristics
  of $\mathcal{E}$ can be pairwise different.
\newblock {\em Colloq. Math.}, 173(1):123--157, 2023.

\bibitem[Car24]{CaraboutE}
Miguel~A. Cardona.
\newblock The cardinal characteristics of the ideal generated by the
  {F}$_{\sigma}$ measure zero subsets of the reals.
\newblock {\em Ky\={o}to Daigaku S\=urikaiseki Kenky\=usho K\=oky\=uroku},
  2024.
\newblock To appear, \href{https://arxiv.org/abs/2402.04984}{arXiv:2402.04984}.

\bibitem[CM19]{CM}
Miguel~A. Cardona and Diego~A. Mej\'{\i}a.
\newblock On cardinal characteristics of {Y}orioka ideals.
\newblock {\em MLQ}, 65(2):170--199, 2019.

\bibitem[CM23]{CM23}
Miguel~A. Cardona and Diego~A. Mej\'ia.
\newblock Localization and anti-localization cardinals.
\newblock {\em arXiv:2305.03248}, 2023.

\bibitem[CMRM24]{CMR2}
Miguel~A. Cardona, Diego~A. Mej\'{\i}a, and Ismael~E. Rivera-Madrid.
\newblock Uniformity numbers of the null-additive and meager-additive ideals.
\newblock Preprint, \href{https://arxiv.org/abs/2401.15364}{arXiv:2401.15364},
  2024.

\bibitem[GM23]{GaMe}
Viera Gavalová and Diego~Alejandro Mejía.
\newblock Lebesgue measure zero modulo ideals on the natural numbers.
\newblock {\em J. Symb. Log.}, pages 1--30, 2023.
\newblock Published online,
  \href{https://www.cambridge.org/core/journals/journal-of-symbolic-logic/article/abs/lebesgue-measure-zero-modulo-ideals-on-the-natural-numbers/CF264BE3BADDB75C562F1AC38FEFF8A3}{doi:10.1017/jsl.2023.97}.

\bibitem[GMS16]{GMS}
Martin Goldstern, Diego~Alejandro Mej{\'{\i}}a, and Saharon Shelah.
\newblock The left side of {C}icho\'n's diagram.
\newblock {\em Proc. Am. Math. Soc.}, 144(9):4025--4042, 2016.

\bibitem[Kad00]{kada}
Masaru Kada.
\newblock More on {C}icho\'{n}'s diagram and infinite games.
\newblock {\em J. Symbolic Logic}, 65(4):1713--1724, 2000.

\bibitem[KO14]{KO}
Shizuo Kamo and Noboru Osuga.
\newblock Many different covering numbers of {Y}orioka's ideals.
\newblock {\em Arch. Math. Logic}, 53(1-2):43--56, 2014.

\bibitem[Mej13]{mejiamatrix}
Diego~Alejandro Mej{\'{\i}}a.
\newblock Matrix iterations and {C}icho\'n's diagram.
\newblock {\em Arch. Math. Logic}, 52(3-4):261--278, 2013.

\bibitem[Mej19]{mejiavert}
Diego~A. Mej\'{\i}a.
\newblock Matrix iterations with vertical support restrictions.
\newblock In {\em Proceedings of the 14th and 15th {A}sian {L}ogic
  {C}onferences}, pages 213--248. World Sci. Publ., Hackensack, NJ, 2019.

\bibitem[Mil82]{Mil82}
Arnold~W. Miller.
\newblock A characterization of the least cardinal for which the {B}aire
  category theorem fails.
\newblock {\em Proc. Amer. Math. Soc.}, 86(3):498--502, 1982.

\bibitem[She95]{shelahnulladd}
Saharon Shelah.
\newblock Every null-additive set is meager-additive.
\newblock {\em Israel J. Math.}, 89(1-3):357--376, 1995.

\bibitem[She00]{Sh00}
Saharon Shelah.
\newblock Covering of the null ideal may have countable cofinality.
\newblock {\em Fund. Math.}, 166(1-2):109--136, 2000.

\bibitem[Voj93]{vo93}
Peter Vojt\'{a}\v{s}.
\newblock Generalized {G}alois-{T}ukey-connections between explicit relations
  on classical objects of real analysis.
\newblock In {\em Set theory of the reals ({R}amat {G}an, 1991)}, volume~6 of
  {\em Israel Math. Conf. Proc.}, pages 619--643. Bar-Ilan Univ., Ramat Gan,
  1993.

\end{thebibliography}
\bibliographystyle{alpha}
}


\end{document}